\numberwithin{equation}{section}
\date{\today}
\keywords{Non-uniform hyperbolicity, Topological Collet-Eckmann condition, polynomial skew product, Fatou component, non-wandering domain theorem}
\author{Zhuchao Ji}
\title{ Non-uniform hyperbolicity in polynomial skew products}
\address{Sorbonne Universités, Laboratoire de Probabilités, Statistique et Modélisation (LPSM, UMR 8001),   4 place Jussieu, 75252 Paris Cedex 05, France}
\address{Shanghai Center for Mathematical Sciences, Jiangwan Campus, Fudan University, No 2005 Songhu Road, Shanghai, China 200438}
 \email{zhuchaoji.math@gmail.com}
\subjclass[2010]{37F10, 37F15, 37F50, 32H50}
\DeclareMathOperator{\Mod}{Mod}
\DeclareMathOperator{\diam}{diam}
\DeclareMathOperator{\dist}{dist}
\DeclareMathOperator{\Crit'}{Crit'}
\DeclareMathOperator{\C}{Crit}
\DeclareMathOperator{\meas}{meas}
\DeclareMathOperator{\supp}{supp}
\DeclareMathOperator{\loc}{loc}
\DeclareMathOperator{\Comp}{Comp}
\DeclareMathOperator{\diamComp}{diam\;Comp}
\newtheorem{theorem}{Theorem}[section]
\newtheorem{definition}[theorem]{Definition}
\newtheorem{proposition}[theorem]{Proposition}
\newtheorem{corollary}[theorem]{Corollary}
\newtheorem{lemma}[theorem]{Lemma}
\newtheorem{remark}[theorem]{Remark}
\newtheorem*{theorem*}{Theorem}
\newtheorem*{definition*}{Definition}
\newtheorem*{lemma*}{Lemma}
\newtheorem*{proposition*}{Proposition}
\begin{document}

	\maketitle
	
	\begin{abstract}
	Let $f:\mathbb{C}^2\to \mathbb{C}^2$ be a polynomial skew product which leaves invariant an  attracting vertical line $ L $. Assume moreover $f$ restricted to $L$ is non-uniformly hyperbolic, in the sense that $f$ restricted to $L$ satisfies one of the following conditions: 1. $f|_L$ satisfies Topological Collet-Eckmann and  Weak Regularity conditions. 2. The Lyapunov exponent at every critical value point lying in the Julia set of $f|_{L}$ exist and is positive, and there is no parabolic cycle.  Under one of the above conditions we show that the Fatou set in the basin of $L$ coincides with the union of the basins of attracting cycles, and the Julia set in the basin of $L$ has Lebesgue measure zero. As an easy consequence there are no wandering Fatou components in the basin of $L$.
	\end{abstract}

	\section{Introduction}
	\subsection{Background}
	
	In one-dimensional complex dynamics, i.e. in the theory of dynamics of rational maps on Riemann sphere $\mathbb{P}^1$, the classical Fatou-Julia dichotomy partitions the Riemann sphere into the Fatou set and the Julia set. Let $f$ be a rational map on $\mathbb{P}^1$, the {\em Fatou set} $ F(f) $ is  defined as the largest open subset of $\mathbb{P}^1 $ in which the sequence of iterates $\left(  f^n\right)  _{n\geq 0}$ is normal. Its complement is the {\em Julia set} $ J(f) $. A {\em Fatou component} is a connected component of $ F(f) $. A Fatou component is called {\em wandering} if it is not pre-periodic.
	One can show that the Fatou set is either empty or an open and dense subset. The dynamics on the Fatou set is completely understood, due to the work of Fatou, Julia, Siegel and Herman, supplemented with Sullivan's non-wandering domain theorem \cite{sullivan1985quasiconformal}: the orbit of any point in the Fatou set eventually lands in an attracting basin, a parabolic basin, a Siegel disk or a Herman ring. See Milnor \cite{milnor2011dynamics} for a self-contained proof.
	
	\medskip
	\par If in addition $f$ satisfies some {\em non-uniformly hyperbolic} conditions, the measurable dynamics of $f$  can also be understood. There are various hyperbolic conditions, such as uniform hyperbolicity, sub-hyperbolicity, the {\em Collet-Eckmann} condition ({\em CE} for short), the {\em Topological Collet-Eckmann} condition ({\em TCE} for short), the condition that Lyapunov exponent at  all critical values exist and is positive, and $f$ has no parabolic cycles ({\em Positive Lyapunov} for short), the {\em Weak regularity condition} ({\em WR} for short), etc.  We say that $f$ satisfies TCE  if there is an "Exponential shrinking of components" on the Julia set, see the precise definition in Definition 2.6. 
	\begin{theorem*}{\bf (Przytycki, Rivera-Letelier, Smirnov \cite[Theorem 4.3]{przytycki2003equivalence})}
		Let $f$ be a TCE rational map on $ \mathbb{P}^1 $ with degree at least 2 and such that $J(f)\neq \mathbb{P}^1$. Then the Fatou set $ F(f) $ is equal to the union of a finite number of attracting basins, and the Julia set $ J(f) $ has Hausdorff dimension strictly smaller than 2 (hence it has area zero).
	\end{theorem*}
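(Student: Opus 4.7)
The plan is to combine the Fatou--Sullivan classification of periodic Fatou components with the TCE shrinking estimate to eliminate every non-attracting component, and then to extract the dimension bound from the same estimate together with the hypothesis $J(f)\neq\mathbb{P}^{1}$.

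For the structure of the Fatou set I would first invoke Sullivan's non-wandering domain theorem, so that every Fatou component is pre-periodic; it suffices to classify periodic components. By the classical Fatou--Sullivan classification, each periodic component is an attracting basin, a parabolic basin, a Siegel disk, or a Herman ring. TCE rules out the last three uniformly: it forces every periodic point in $J(f)$ to be repelling, which already kills parabolic cycles, and via the exponential shrinking of pullback components at every Julia point it forbids the non-contracting inverse branches of $f^{n}$ that would be supplied by a parabolic petal, by the rotational dynamics on the boundary of a Siegel disk, or by the two boundary cycles of a Herman ring. Finiteness of the number of attracting cycles then follows from the Fatou--Shishikura inequality, since TCE has already eliminated all other types of non-repelling cycles.

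For the dimension bound the strategy is thermodynamic. Because $J(f)\neq\mathbb{P}^{1}$, there is a disk $B\subset F(f)$ of definite size. Given $x\in J(f)$, TCE furnishes, at a positive-density set of times $n$, a disk $D_{n}$ around $x$ whose pullback component has diameter $\leq C\lambda^{n}$ and on which the corresponding inverse branch of $f^{n}$ has bounded degree. Pulling $B$ back along these branches and applying Koebe distortion produces, at every scale around every Julia point, Fatou holes of definite relative area. Feeding this into the thermodynamic machinery developed by Przytycki and Rivera-Letelier, one obtains geometric conformal measures of exponent $t<2$ supported on $J(f)$; Bowen's formula together with the equality of hyperbolic and Hausdorff dimensions, which is valid in the TCE setting, then gives $\dim_{H} J(f)<2$ and in particular zero planar area.

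The main obstacle I expect is this last step. Exponential shrinking is a purely metric statement, whereas a strict inequality $\dim_{H} J(f)<2$ is a much finer geometric fact and needs a genuine thermodynamic or Frostman-type argument rather than a naive covering bound. The crucial inputs are the assumption $J(f)\neq\mathbb{P}^{1}$, which supplies a Fatou hole of definite size to pull back, and Koebe distortion along the TCE inverse branches of $f^{n}$; without either of these the argument breaks down, consistently with the fact that Latt\`es maps (for which $J(f)=\mathbb{P}^{1}$) are TCE yet have Hausdorff dimension exactly~$2$.
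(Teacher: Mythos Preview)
The paper does not prove this theorem: it is quoted in the introduction as a background result from \cite{przytycki2003equivalence}, and the same facts are restated later (Proposition~2.7) with the remark ``For the proof see \cite{przytycki2003equivalence} and \cite{przytycki2007statistical}.'' There is therefore no argument in the paper itself to compare your proposal against.

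That said, your sketch is broadly aligned with how the cited references establish the result. The elimination of parabolic, Siegel and Herman components via non-shrinking pullbacks along their boundaries is exactly the right mechanism: a pullback component of $D(x,r)$ through a boundary point of a rotation domain cannot shrink, and a parabolic periodic point in $J(f)$ would have a non-repelling multiplier, both contradicting exponential shrinking. For the dimension bound, your route through thermodynamic formalism and conformal measures is valid, but the original and more direct argument (Przytycki--Rohde \cite{przytycki1998porosity}, extended to TCE in \cite{przytycki2003equivalence}) is simply that TCE together with $J(f)\neq\mathbb{P}^1$ makes $J(f)$ \emph{mean porous}: at every Julia point and every small scale one finds a Fatou hole of definite relative size by pulling back a fixed Fatou disk along a bounded-degree inverse branch. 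Mean porosity alone forces upper box dimension strictly below $2$, hence Hausdorff dimension strictly below $2$, without invoking Bowen's formula or the equality of hyperbolic and Hausdorff dimensions. Your thermodynamic detour is correct in outline but heavier than necessary for the bare inequality $\dim_H J(f)<2$.
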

	
	\medskip
	
	\par In higher dimensional complex dynamics,  one of the major problems is to study the dynamics of {\em holomorphic endomorphisms} of $\mathbb{P}^k$, $k\geq 2$. The Fatou and Julia sets can be defined similarly. Unlike the one-dimensional case, little is known about the dynamics on the Fatou set in higher dimension. It is known that Sullivan's non-wandering domain theorem does not hold in general. Indeed Astorg, Buff, Dujardin, Peters and Raissy \cite{astorg2014two} constructed a holomorphic endomorphism $ h:\mathbb{P}^2\longrightarrow \mathbb{P}^2$ induced by a {\em polynomial skew product}, possessing a wandering Fatou component.
	\medskip
	
	\par A polynomial skew product $ f $ is a polynomial map from  $ \mathbb{C}^2 $ to $ \mathbb{C}^2 $, of the following form:
	\begin{equation*}
	f(t,z)=(g(t),h(t,z)),
	\end{equation*}
	where $ g $ is a one variable polynomial and $ h$ is a two variables polynomial. We assume that $g$ and $h$ have degree at least 2. In the rest of the paper  a polynomial map or a rational map is asked to have degree at least 2.  See Jonsson \cite{jonsson1999dynamics} for a systematic study of such polynomial skew products, see also Dujardin \cite{dujardin2016non}, Astorg and Bianchi \cite{astorg2018bifurcations}, Boc-Thaler, Fornaess and Peters \cite{boc2015fatou} for  related studies. As the definition suggests, the polynomial skew product leaves invariant a foliation by vertical lines, hence one-dimensional tools can be used. Our first purpose is to study the dynamics of a polynomial skew product on its Fatou set.
	\medskip
	
	 We assume $h$ has the expression
	\begin{equation*}
	h(t,z)=\sum_{i+j\leq n}a_{i,j}t^i z^j.
	\end{equation*}
	If in addition we assume the polynomial skew product $f$ satisfies $\deg  \;g= \deg  \;h=n$, and $a_{0,n}\neq 0$, 
	then $f$ extends to $ \mathbb{P}^2 $ holomorphically. In this case the polynomial skew product is called {\em regular}. The regular polynomial skew products form a sub-class of holomorphic endomorphisms on $ \mathbb{P}^2 $.
	\medskip
	
	To investigate the Fatou set of $ f
	$, let $ \pi_1 $ be the projection to the $ t $-coordinate, i.e. \begin{equation*}\pi_1 :\mathbb{C}^2\to \mathbb{C}, \;\;\pi_1(t,z)=t .
	\end{equation*}
	We first notice that $\pi_1(F(f))\subset F(g)$, and passing to some iterate of $f$, we may assume that  the points in $F(g)$ will eventually land into an immediate  basin or a Siegel disk (no Herman rings for polynomials), thus we only need to study the following semi-local case:
	\begin{equation}\label{form}
	f=(g,h): \Delta\times\mathbb{C}\to \Delta\times\mathbb{C},
	\end{equation}
	where $g(0)=0$ which means the line $L:\left\lbrace t=0\right\rbrace $ is invariant and $ \Delta $ is an immediate attracting or a parabolic basin or a Siegel disk of $g$. The map $f$ is called attracting, parabolic or elliptic respectively when $g'(0)$ is attracting, parabolic, elliptic. The examples of wandering domains constructed in \cite{astorg2014two} are parabolic polynomial skew products. At this stage it remains an interesting problem to investigate the existence of wandering domains for attracting  polynomial skew products, one part of our main theorem answer this question in the negative way under the non-uniformly hyperbolic condition.
	\medskip
	
	\par  In the geometrically attracting case, by  Koenigs' Theorem,  (\ref{form}) is locally conjugated to
	\begin{equation}\label{normal}
	f(t,z)=(\lambda t,h(t,z)),
	\end{equation}
	where $\lambda=g'(0)$. Beware that $h$ is no longer a polynomial in $t$.
	\medskip
	\par In the super-attracting case by Böttcher's Theorem, (\ref{form}) is locally conjugated to
	\begin{equation*}
	f(t,z)=(t^m,h(t,z)),\;m\geq 2.
	\end{equation*}
	
	In both attracting or super-attracting cases \begin{equation*}
	h(t,z)=a_0(t)+a_1(t)z+\cdots+a_d(t)z^d
	\end{equation*}
	is a polynomial in $z$ with coefficients $ a_i(t) $ holomorphic in $ t $ in a neighborhood of $ 0 $. We furthermore assume that $ a_d(0)\neq 0 $, which means the degree of $h(t,z)$ in $z$ is constant for $t\in \Delta$. This condition is needed in the proof of the main theorem, and it is satisfied for regular polynomial skew products. In the rest of the paper, an attracting polynomial skew product is assumed to have the normal form (\ref{normal}), and $ \Delta $ denotes a small disk centered at $ 0 $.
	\subsection{Main theorem and outline of the proof}
	In this paper we show that  under the non-uniformly hyperbolic hypothesis, we can exclude the existence of wandering domain, and give a classification of the dynamics on the Fatou set, and show that the Julia set has Lebesgue measure zero.
	\begin{theorem*}{\bf(Main Theorem)}
		Let $f$ be an attracting polynomial skew product, let $ p=f|_{L} $ be the restriction of $f$ on the invariant fiber $L$. Assume that $p$ satisfies one of the following conditions: 1. $p$ satisfies TCE and WR. 2. $p$ satisfies Positive Lyapunov. Then the Fatou set of $f$ coincides with the union of the basins of attracting cycles, and the Julia set of $f$ has  Lebesgue measure zero. 
	\end{theorem*}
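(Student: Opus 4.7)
Since $|\lambda|<1$, every orbit in $\Delta\times\mathbb{C}$ has first coordinate tending to $0$, so all limit dynamics of $f$ happens on $L$; every attracting cycle of $f$ in $\Delta\times\mathbb{C}$ must therefore lie on $L$ and corresponds to an attracting cycle of $p$ (with extra contracting multiplier $\lambda$ in the $t$-direction), whose 2D basin $\tilde B_i$ is an open subset of $\Delta\times\mathbb{C}$ satisfying $\tilde B_i\cap L=B_i$. The plan is to reduce the 2D statement to the one-dimensional Przytycki--Rivera-Letelier--Smirnov theorem applied to $p$, by upgrading the 1D TCE exponential shrinking of components (ESC) to a fiberwise 2D version for $f$ on a tube $\{|t|<\epsilon\}$ around $L$.

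First apply the PRS theorem to $p$: $F(p)=B_1\sqcup\cdots\sqcup B_r$ is a finite union of attracting basins in $L$, the Julia set $J(p)$ has Hausdorff dimension $<2$, and 1D ESC holds, i.e.\ there are $r_0>0$, $\rho\in(0,1)$, $C>0$ such that every connected component of $p^{-n}(D(w,r_0))$ for $w\in J(p)$ has diameter $\leq C\rho^n$. Both hypotheses (TCE$+$WR and Positive Lyapunov) deliver this ESC together with the quantitative distortion estimates needed below. The main technical step is then the following 2D ESC: \emph{there exist $\epsilon>0$, $\rho_1\in(0,1)$, $C_1>0$ such that for every $w\in J(p)$ and every $n\geq 0$, each connected component of $f^{-n}(\{|t|<\epsilon\}\times D(w,r_0))$ contained in $\{|t|<\epsilon\}\times\mathbb{C}$ has vertical diameter $\leq C_1\rho_1^n$.} The proof telescopes backwards along $f$-orbits: at step $k$ the vertical fiber map $z\mapsto h(\lambda^{n-k}t,z)$ differs from $p$ by a $t$-perturbation of order $|\lambda|^{n-k}\epsilon$, and stacking the 1D ESC with these geometrically summable perturbations, using WR (respectively Positive Lyapunov) to control the pullback distortion whenever the orbit passes close to a critical point of $p$, yields the uniform 2D bound.

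With the 2D ESC in hand, the Fatou classification is straightforward. Let $(t_0,z_0)\in F(f)\cap(\Delta\times\mathbb{C})$ with Fatou component $U$, and extract a normal-family limit $f^{n_k}|_U\to(0,\phi)$ on a compact subset of $U$. If $\phi$ is non-constant, then $\phi(U)$ is open and, since $J(p)$ has empty interior, must meet $F(p)$; choosing $(t_1,z_1)\in U$ with $\phi(t_1,z_1)\in B_i$ gives $f^{n_k}(t_1,z_1)\in\tilde B_i$ for large $k$, hence $U\subset\tilde B_i$. If instead $\phi\equiv(0,w)$ with $w\in J(p)$, then for large $k$ the image $f^{n_k}(U)$ lies in a small bidisk around $(0,w)$ contained in $\{|t|<\epsilon\}\times D(w,r_0)$, so after finitely many initial iterates bringing $|t|<\epsilon$, $U$ sits inside a single connected component of $f^{-(n_k-N)}(\{|t|<\epsilon\}\times D(w,r_0))$; the 2D ESC bounds its vertical diameter by $C_1\rho_1^{n_k-N}\to 0$, contradicting that $U$ is 2D open. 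Therefore $F(f)\cap(\Delta\times\mathbb{C})=\bigcup_i\tilde B_i$.

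For the Lebesgue measure statement, use Fubini in the $t$-coordinate: the slice $\{0\}\times\mathbb{C}$ contributes $J(p)$, of zero planar area by PRS; on a slice $\{t\}\times\mathbb{C}$ with $|t|<\epsilon$, the complement of $\bigcup_i\tilde B_i$ is covered by the ESC components of the 2D lemma, whose total planar area is $0$ by the usual TCE area estimate transferred to the vertical fiber; for arbitrary $t\in\Delta$ one iterates finitely many times by $g$ to reach $|t|<\epsilon$. The main obstacle throughout is the 2D ESC lemma: although the $t$-perturbation decays geometrically, the backward orbit may approach the critical set of $p$ at arbitrary times, and WR / Positive Lyapunov is exactly the quantitative ingredient that allows these critical approaches to be absorbed into the 1D ESC and produce a uniform exponential rate transverse to $L$.
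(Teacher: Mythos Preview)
Your proposal hinges entirely on the ``2D ESC'' lemma, and this is where there is a genuine gap. The sketch you give --- telescoping the 1D ESC along the fibers while absorbing the ``geometrically summable'' perturbations of order $|\lambda|^{n-k}\epsilon$, with WR/Positive Lyapunov invoked to handle passages near critical points --- does not constitute a proof, and in fact a uniform fiberwise shrinking statement of this strength is precisely what the paper does \emph{not} establish. Two concrete issues. First, the perturbations in your telescope are largest (of fixed order~$\epsilon$) at the last steps of the pullback, near time~$0$, after the component has already been pulled back through $\sim n$ maps and may lie arbitrarily close to a critical point of the fiber map; a perturbation at that scale can change the degree and geometry of the branch, so summability of the perturbations is not the relevant quantity. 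Second, WR and Positive Lyapunov are conditions on the \emph{forward} orbits of the \emph{critical values} of~$p$; you do not explain how they control an \emph{arbitrary} backward pullback passing near a critical point of a perturbed fiber map $h(\lambda^j t_0,\cdot)$. The paper in fact warns (beginning of Section~4) that one cannot expect good behavior on every fiber: Peters--Vivas exhibit attracting skew products with vertical Fatou disks whose orbits approach the critical set exponentially fast, so a naive uniform transfer of ESC to all fibers is suspect.

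The paper's route is substantially different and much more indirect. It never proves a uniform 2D ESC; instead it proves only almost-everywhere statements, and WR/Positive Lyapunov enters in one specific place: it is used to build, via a graph-transform argument with sequences of degree-one H\'enon-like maps, a local \emph{stable manifold} through each critical value $v\in CV(p)\cap J(p)$, together with renormalization maps along the critical value variety whose degrees stay uniformly bounded (Section~3). From this one deduces, by a Borel--Cantelli argument over a full-measure set of fibers, that Lebesgue-a.e.\ point \emph{slowly approaches} the critical set (Theorem~1.2, Section~4); then that any such point in $W^s(J(p))$ has lower vertical Lyapunov exponent at least $\log\mu_{Exp}$ (Theorem~1.4, Section~5, where an ESC-type estimate is proved but only for times $n\lesssim -\log|\pi_1(x)|$, cf.\ Lemma~5.2); and finally a Pliss hyperbolic-time plus Lebesgue-density argument yields that $J(f)$ has measure zero (Theorem~1.5, Section~6). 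The Fatou classification is obtained as a corollary of the positive vertical Lyapunov exponent (Corollary~5.4), not of a diameter bound on pullbacks. In short, the ``main obstacle'' you flag in your last sentence is the content of the whole paper, and your sketch does not engage with the actual mechanism --- stable manifolds and renormalization along the critical varieties --- by which the hypotheses on~$p$ are converted into control of two-dimensional orbits near the critical set.
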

We let $ \infty $ be the point at infinity of $L$. Since $\infty$ can be seen as an attracting fixed point, the Fatou set of $f$ is never empty. The definitions of TCE condition, WR condition and Positive Lyapunov condition are given in section 2. The   basins of attracting cycles are clearly non-wandering, as a consequence  there are no wandering domains in the  basin of $L$. In the rest of the paper we shall prove the main theorem for  $f$  geometrically attracting. In the super-attracting case the proof is completely similar, and left to the reader. Note that in that case the non existence wandering Fatou components was established by Lilov \cite{lilov2004fatou} (see also \cite{ji2020non}).
	
	\medskip
	
	The proof of the main theorem is divided into several steps. In section 2 we recall some preliminaries, and introduce some one-dimensional techniques such as the Koebe distortion lemma, Przytycki's lemma and Denker-Przytycki-Urbanski's lemma (DPU lemma for short). The one-dimensional non-uniformly hyperbolic theory is also introduced. Then proof of the main theorem goes as follows. 
	
	\medskip
	
	\noindent
	{\bf  Step 1:} We start with some definitions. 
	Let $ f: \Delta\times\mathbb{C}\to \Delta\times\mathbb{C} $ be an attracting polynomial skew product, the {\em critical set} of $f$ is defined by  $ \C :=\left\lbrace (t,z)\in \Delta\times \mathbb{C}\;|  \;\frac{\partial h}{\partial z}(t,z)=0\right\rbrace  $. We let the radius of $\Delta$ be small enough so that each connected component of Crit intersect $L$ at exactly one point.  We define \begin{equation*}
	\Crit':=\left\lbrace \text{the union of the connected components of Crit that intersect}\; J(p) \right\rbrace.  
	\end{equation*} 
	
	\begin{definition}
		Let $ x\in \Delta\times \mathbb{C} $ be a point in the immediate basin of $L$, we say that $x$ slowly approach Crit'  if for every $ \alpha >0$, $ \dist _v(f^n(x),\Crit' ) \geq e^{-\alpha n} $ for every sufficiently large $n$.
	\end{definition}
	Here $ \dist _v $ denote the vertical distance which means that $\dist _v(x,y)=|\pi_2(x)-\pi_2(y)|$, where $ \pi_2 $ is the projection to the $ z $-coordinate, i.e. \begin{equation*}\pi_2 :\mathbb{C}^2\to \mathbb{C}, \;\;\pi_2(t,z)=z.
	\end{equation*}
		Let also $ D_v(x,r) $ denote the vertical disk, $ D_v(x,r)=\left\lbrace y \in\Delta\times \mathbb{C}:\pi_1(y)=\pi_1(x),\;\dist _v(x,y)<r \right\rbrace $. 
	\medskip
	\par This notion of slow approach was introduced by Levin, Przytycki and Shen in one-dimensional complex dynamics in \cite{levin2016lyapunov}. Next we show that
	\begin{theorem}
		Lebesgue a.e. $ x\in \Delta\times \mathbb{C} $ slowly approach Crit'.
	\end{theorem}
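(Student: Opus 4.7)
The strategy is a Borel--Cantelli argument: for each $\alpha>0$, I bound the measure of the ``bad'' sets geometrically and sum. Slicing along vertical fibres reduces the problem to a one-dimensional Przytycki-type preimage-sum estimate, which is the main technical point.

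Fix $\alpha>0$ and for $n\geq 1$ set $A_n:=\{x\in\Delta\times\mathbb{C}:\dist_v(f^n(x),\Crit')<e^{-\alpha n}\}=f^{-n}(U_n)$, where $U_n$ is the vertical $e^{-\alpha n}$--tube around $\Crit'$. The conclusion of the theorem is that $\bigcup_{\alpha>0}\limsup_n A_n$ has Lebesgue measure zero; a diagonal argument over a sequence $\alpha_k\downarrow 0$ reduces this to $\mathrm{Leb}(\limsup_n A_n)=0$ for each fixed $\alpha$, which by Borel--Cantelli follows from $\sum_n\mathrm{Leb}(A_n)<\infty$. Because $\Crit'$ is a finite union of holomorphic graphs $t\mapsto c_i(t)$ over $\Delta$, each vertical slice $U_n\cap L_s$ is a finite union of planar disks of radius $e^{-\alpha n}$, hence has planar area $\lesssim e^{-2\alpha n}$ uniformly in $s$.

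The skew-product identity $f(L_t)=L_{\lambda t}$ forces $f^{-n}(y)\cap(\Delta\times\mathbb{C})\subset L_{\lambda^{-n}\pi_1(y)}$, so the non-autonomous vertical dynamics $\phi_{n,t_0}:=h(\lambda^{n-1}t_0,\cdot)\circ\cdots\circ h(t_0,\cdot):L_{t_0}\to L_{\lambda^n t_0}$ governs the fibre preimages. Combining the holomorphic change of variables on each fibre with Fubini in $t_0$, and observing that the $|\lambda|^{\pm 2n}$ factors cancel exactly between the Jacobian $|\det Df^n|^2=|\lambda|^{2n}|\phi_{n,t_0}'|^2$ and the change of variable $t_y=\lambda^n t_0$, one obtains
\begin{equation*}
\mathrm{Leb}(A_n)=\int_\Delta\int_{U_n\cap L_{\lambda^n t_0}}\sum_{z\in\phi_{n,t_0}^{-1}(w)}\frac{1}{|\phi_{n,t_0}'(z)|^2}\,dA(w)\,dA(t_0).
\end{equation*}
Matters thus reduce to producing a constant bound for the inner preimage sum, independent of $n$, $t_0$ and $w$ (off a planar null set in $w$).

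For an autonomous iterate $p^n$ such a bound is precisely Przytycki's lemma: $\sum_{z\in p^{-n}(w)}|(p^n)'(z)|^{-2}\leq C(p)$ for a.e.\ $w$ and all $n$. Here $\phi_{n,t_0}$ is instead a composition of polynomials $h(\lambda^k t_0,\cdot)$ which converge uniformly on compact sets to $p$ as $\lambda^k t_0\to 0$. My plan is to split the composition into a bounded-depth initial block $\phi_{N,t_0}$, controlled by the Koebe distortion lemma on univalent inverse branches, followed by a tail $h(\lambda^{n-1}t_0,\cdot)\circ\cdots\circ h(\lambda^N t_0,\cdot)$, to which a non-autonomous version of Przytycki's lemma (or its DPU refinement) applies, asserting that the preimage-sum bound is stable under $C^1$-small perturbations of $p$ on a compact set, with constants uniform in $t_0\in\Delta$. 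The exceptional $w$ on which the bound degenerates form a planar null set in each target fibre and so contribute nothing to the integral.

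The main obstacle is exactly this last step: transferring the one-dimensional Przytycki (respectively DPU) estimate from $p$ to the non-autonomous family $\phi_{n,t_0}$ with constants uniform in both $n$ and $t_0$. Once it is established, the inner sum in the displayed identity is bounded by a constant, the inner integrand is $\lesssim e^{-2\alpha n}$, integrating over $t_0\in\Delta$ yields $\mathrm{Leb}(A_n)\lesssim e^{-2\alpha n}$, and summing the resulting geometric series closes the Borel--Cantelli argument and proves Theorem 1.2.
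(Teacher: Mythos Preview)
Your reduction to a fibre-wise transfer-operator estimate is clean, but the key input you invoke does not exist in the form you state it. What you call ``Przytycki's lemma'' is not the bound $\sum_{z\in p^{-n}(w)}|(p^n)'(z)|^{-2}\leq C$; Przytycki's lemma (Lemma~2.1 here) and the DPU lemma (Lemma~2.3) concern recurrence times of orbits to critical neighbourhoods, not transfer-operator sums. In fact your pointwise bound is false already in the autonomous case: for $p(z)=z^2-2$ one computes $\mathcal L1(w)=\tfrac{1}{2|w+2|}$, which exceeds any constant $C$ on a disk of positive area around the critical value $-2$; more generally $\mathcal L^n1$ blows up on neighbourhoods of every post-critical point $p^j(c)$, $1\le j\le n$, and for a recurrent critical point these accumulate near $c$ itself, which is exactly where your $w$ lives.

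Even if one retreats to the equivalent integrated statement $\int_{D(c,e^{-\alpha n})}\mathcal L^n1\,dA=\mathrm{Area}\bigl(p^{-n}(D(c,e^{-\alpha n}))\bigr)$ and tries to show summability, the non-autonomous step is not a perturbation argument: the paper remarks at the start of Section~4 (citing Peters--Vivas) that there exist fibres $L_{t_0}$ carrying an open Fatou disk whose forward orbit comes exponentially close to $\Crit'$. On such a fibre the preimage $\phi_{n,t_0}^{-1}(U_n\cap L_{\lambda^n t_0})$ has area bounded below by a positive constant for infinitely many $n$, so your inner integrand does not decay there; in particular the transfer-operator sum is \emph{not} stable under $C^1$-small perturbations of $p$ in the way you need. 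The paper circumvents this by an entirely different route: it builds stable manifolds at each $v\in CV(p)\cap J(p)$ and renormalisation maps along the critical value varieties (Section~3), uses these to prove an escaping lemma (Lemma~4.2) selecting a full-measure set $E\subset\Delta$ of good base points, and then shows that for $u\in E$ the pull-back $f^n:\Gamma\to D_v(c,e^{-\alpha n})$ has \emph{uniformly bounded topological degree}. This bounded-degree statement, not a derivative sum, is what feeds into the Koebe-type Lemma~2.4 to produce the exponential ratio $\meas E'_n/\meas E_n\le e^{-\alpha'n}$ and close the Borel--Cantelli. Your plan, as it stands, does not reach this bounded-degree mechanism and the proposed substitute fails.
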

	This is proved in sections 3 and 4. In section 3 the existence of a stable manifold at each critical value in $ J(p) $ and the properties of renormalization maps associated with a critical value variety are studied. In section 4 we use the techniques developed in section 3 to prove Theorem 1.2.  Step 1 is where we need the TCE and WR conditions or the Positive Lyapunov condition, in the remaining steps the TCE condition alone is sufficient for the proof.
	
	\medskip
	
	\noindent
	{\bf  Step 2:} We define vertical Lyapunov exponent at one point as follows:
	\begin{definition}
		Let $ x\in \Delta\times \mathbb{C} $ be a point in the immediate basin of $L$, the {\em lower vertical Lyapunov exponent} is defined by
		\begin{equation*}
		\chi_{-}(x):=\liminf _{n\to\infty}\frac{1}{n} \log|Df^n|_{x}(v)|. 
		\end{equation*}
	\end{definition}
	Where $v=(0,1)$ is the unit vertical tangent vector.
	\medskip
	
	It is well-known that the one-dimensional attracting basins of $p$ extend to two-dimensional attracting basins, for example see \cite{lilov2004fatou} or \cite[Section 3]{ji2020non}. These two-dimensional attracting basins correspond to non-wandering Fatou components. We let $ W^s(J(p)) $ denote the {\em stable set} of $ J(p) $,
	\begin{equation*}
	W^s(J(p)):=\left\lbrace x\in \Delta\times \mathbb{C}:\lim_{n\to\infty}\dist (f^n(x),J(p))= 0\right\rbrace .
	\end{equation*} It is easy to see that assuming $p$ satisfies TCE, $ W^s(J(p)) $ is the union of the wandering Fatou components together with the Julia set $ J(f) $. We show that
	\begin{theorem}
		If $ x\in W^s(J(p)) $ slowly approach Crit', then $ \chi_{-}(x)\geq \log \mu_{Exp} $, where $ \mu_{Exp}>1 $ is the constant appearing in the definition of the TCE condition.
	\end{theorem}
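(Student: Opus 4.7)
The plan is to adapt to the skew-product setting the classical one-dimensional argument of Przytycki--Rivera-Letelier--Smirnov that slow recurrence to the critical set combined with TCE forces a lower bound of $\log \mu_{Exp}$ on the Lyapunov exponent. Writing $f^n(x) = (t_n, z_n)$ with $t_n = \lambda^n t_0$, consider the vertical slice map $F_n^{t_0}(z) := \pi_2 \circ f^n(t_0, z)$, which is a polynomial in $z$. The chain rule gives
\begin{equation*}
|Df^n|_x(v)| = |(F_n^{t_0})'(z_0)| = \prod_{k=0}^{n-1} \bigl| \partial_z h(f^k(x)) \bigr|,
\end{equation*}
so the goal is to show that this product grows at least like $\mu_{Exp}^n$, up to a subexponential factor.

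Since $x \in W^s(J(p))$, we have $\dist(z_n, J(p)) \to 0$, so pick $\zeta_n \in J(p)$ with $|z_n - \zeta_n| \to 0$. Fix $r_0 > 0$ small enough that the TCE exponential shrinking applies at every point of $J(p)$. The core step is to construct, for each large $n$, a univalent inverse branch $G_n$ of $F_n^{t_0}$ defined on the vertical disk $D_v(y_n, r_0/2)$ and sending $y_n$ back to $x$, with Koebe distortion uniformly bounded in $n$. The slow approach condition, $\dist_v(f^k(x), \Crit') \geq e^{-\alpha k}$ for every $\alpha > 0$ and $k$ large, supplies the room needed: at each intermediate fiber $\{t = t_k\}$ the nearest vertical critical curve is at distance at least $e^{-\alpha k}$, and since $|t_k| \leq |\lambda|^k |t_0|$, the vertical critical set of $F_n^{t_0}$ along the orbit of $x$ is a uniformly small perturbation of the critical points of $p^n$. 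The inverse branch is then built fiber by fiber, using a skew-product version of Przytycki's lemma and the DPU lemma from section 2.

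Once $G_n$ is in place, the Koebe distortion lemma gives $\diam G_n(D_v(y_n, r_0/2)) \asymp r_0 / |(F_n^{t_0})'(z_0)|$. Comparing $F_n^{t_0}$ with $p^n$ along the orbit of $\zeta_n$ (possible because the orbit of $x$ is exponentially shadowed by an orbit of $p$ on $L$) and invoking TCE yields $\diam G_n(D_v(y_n, r_0/2)) \leq C e^{\varepsilon n} \mu_{Exp}^{-n}$ for $\varepsilon > 0$ as small as we wish, absorbing the perturbative errors through the freedom in $\alpha$. Combining the two estimates gives $|(F_n^{t_0})'(z_0)| \geq c\, e^{-\varepsilon n} \mu_{Exp}^n$; sending $\varepsilon \to 0$ concludes $\chi_-(x) \geq \log \mu_{Exp}$. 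The main obstacle is the construction of the bounded-distortion inverse branches $G_n$: the one-dimensional Koebe/Przytycki construction must be upgraded to handle the $t$-dependence of the vertical critical curves. The joint use of slow approach to Crit' (bounding the distance to critical curves at each step) and of the exponential base attraction $|t_k| \to 0$ (bounding the closeness of $F_n^{t_0}$ to $p^n$) is what permits the upgrade, and the underlying quantitative input should come from the stable-manifold and renormalization machinery set up in section 3.
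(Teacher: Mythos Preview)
Your proposal has a genuine gap at the core step: the claim that one can build a \emph{univalent} inverse branch $G_n$ of $F_n^{t_0}$ on a disk $D_v(y_n,r_0/2)$ of \emph{fixed} radius is not justified by slow approach, and in general fails. Slow approach gives $\dist_v(f^k(x),\Crit')\geq e^{-\alpha k}$ for every $\alpha>0$ and large $k$; but the TCE bound on the pullback of $D_v(y_n,r_0/2)$ at time $n-m$ is only $\diam\leq \mu_{Exp}^{-m}$, which for small $m$ (i.e.\ $k=n-m$ close to $n$) is of order one and hence can contain the critical points sitting at distance $e^{-\alpha(n-m)}$ from $f^{n-m}(x)$. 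Neither Przytycki's lemma nor the DPU lemma controls the criticality of such pullbacks: they bound recurrence of a single orbit to $\Crit'$, not the degree of $f^n$ on preimages of fixed-size disks. (Bounded degree on fixed-size disks is established in the paper only \emph{after} the Lyapunov bound, via hyperbolic times in Section~6.) A secondary imprecision: the orbit of $x$ is not ``exponentially shadowed by an orbit of $p$ on $L$''; only \emph{tail} segments starting at $f^m(x)$ are shadowed, and only for times proportional to $m$ (since $|\pi_1(f^m(x))|=|\lambda|^m|t_0|$).

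The paper's proof avoids this obstruction by a telescoping scheme. It replaces the fixed-radius disk by an exponentially small one $D_v(f^n(x),e^{-\delta n})$, with $\delta$ proportional to $\alpha$, and pulls back only over a fraction $\theta'n$ of the time. An ``expanding time'' lemma (using $|\pi_1(f^{(1-\theta')n}(x))|\sim|\lambda|^{(1-\theta')n}$) shows that over this segment every intermediate pullback has diameter $\leq e^{-\alpha n}$, hence misses $\Crit'$ by slow approach, hence the segment is univalent and Koebe yields $|(Df^{\theta'n})_{f^{(1-\theta')n}(x)}(v)|\geq C e^{-\delta n}\mu_{Exp}^{\theta'n}$. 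One then repeats with $n$ replaced by $(1-\theta')n$, takes the product over $O(\log n)$ blocks, and lets $\alpha\to 0$ to kill the $e^{-\delta n}$ losses. The essential idea you are missing is precisely this: shrink the target disk with $n$ and telescope, rather than attempt a single global univalent pullback.
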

	This is proved in section 5. Then by Theorem 1.2 Lebesgue a.e. point $x\in W^s(J(p))$ satisfies $ \chi_{-}(x)\geq \log \mu_{Exp} $. This already implies the non-existence of wandering Fatou component thanks to the fact that points in Fatou set can not have positive Lyapunov exponent. Thus  $ W^s(J(p)) $  coincide with the Julia set $ J(f) $.
	
	\medskip
	
	\noindent
	{\bf  Step 3:} Finally by using an adaption of a so called \enquote{telescope argument} in \cite[Theorem 1.5]{levin2016lyapunov}, we show that 
	\begin{theorem}
		The Julia set $ J(f) $ has Lebesgue measure zero.
	\end{theorem}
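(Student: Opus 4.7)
The plan is to argue by contradiction, reducing via Fubini to a density statement in vertical fibers, and then running an adaptation of the telescope argument of Levin--Przytycki--Shen along the forward orbit. Assume $J(f)\cap (\Delta\times\mathbb{C})$ has positive four-dimensional Lebesgue measure. Combining Theorems 1.2 and 1.3 and the conclusion of Step 2 (which identifies $W^{s}(J(p))$ with $J(f)$ in the basin of $L$) with Fubini, one finds a positive two-dimensional measure set of parameters $t_{0}\in\Delta$ such that the vertical slice $J(f)\cap F_{t_{0}}$, with $F_{t_{0}}=\{t_{0}\}\times\mathbb{C}$, has positive planar Lebesgue measure, and such that for planar-almost every $x_{0}=(t_{0},z_{0})$ in this slice the point $x_{0}$ slowly approaches $\Crit'$ and satisfies $\chi_{-}(x_{0})\geq\log\mu_{Exp}$. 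The planar Lebesgue density theorem on $F_{t_{0}}$ lets one furthermore pick such an $x_{0}$ which is a planar density point of $J(f)\cap F_{t_{0}}$.

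Next I would run the telescope. Write $x_{n}=f^{n}(x_{0})$. Slow approach to $\Crit'$ gives at worst sub-exponentially small lower bounds on the vertical distance of the intermediate iterates to the post-critical variety, so the Koebe distortion lemma together with Przytycki's lemma and the DPU lemma from Section 2 applies to the vertical inverse branches of $f$ at each step. This yields, for every large $n$, a univalent vertical inverse branch $\varphi_{n}$ of $f^{n}$ on a vertical disk $D_{v}(x_{n},r_{n})$ whose radius $r_{n}$ is bounded below at worst sub-exponentially and which has uniformly bounded distortion. The hypothesis $\chi_{-}(x_{0})\geq\log\mu_{Exp}$ then forces $\varphi_{n}(D_{v}(x_{n},r_{n}))$ to lie, up to bounded distortion, inside a vertical disk of radius $\varepsilon_{n}\lesssim r_{n}e^{-n\log\mu_{Exp}}$ centered at $x_{0}$, with $\varepsilon_{n}\to 0$.

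To close the argument I would extract a subsequence $n_{k}$ with $x_{n_{k}}\to y\in J(p)\subset L$. Since $p$ is TCE, the Przytycki--Rivera-Letelier--Smirnov theorem recalled in the introduction gives that $J(p)$ has Hausdorff dimension strictly less than two, hence planar Lebesgue measure zero in $L$; in particular, in a fixed small disk around $y$ on $L$, planar-almost every point lies in the basin of an attracting cycle of $p$. Each such one-dimensional basin extends to an open two-dimensional basin of an attracting cycle of $f$ (see \cite{lilov2004fatou} or \cite[Section 3]{ji2020non}). By compactness and the convergence $t_{n_{k}}\to 0$, the planar Lebesgue density of $J(f)\cap F_{t_{n_{k}}}$ in $D_{v}(x_{n_{k}},r_{n_{k}})$ tends to zero. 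Transporting this via $\varphi_{n_{k}}$, which preserves planar densities up to a bounded factor, makes the density of $J(f)\cap F_{t_{0}}$ in $\varphi_{n_{k}}(D_{v}(x_{n_{k}},r_{n_{k}}))$ tend to zero as well, contradicting the fact that $x_{0}$ is a planar density point of $J(f)\cap F_{t_{0}}$.

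The main obstacle is the last step: transferring the planar-measure-zero property of $J(p)$ on $L$ uniformly to the nearby vertical fibers $F_{t_{n_{k}}}$, so as to dominate the fiber density of $J(f)$ from above by a quantity vanishing in $k$. This requires showing that for $t$ close to $0$ the two-dimensional basins of the attracting cycles of $p$, restricted to $F_{t}$, cover nearly all of the planar Lebesgue measure of a fixed disk around $y$, which is a robust but non-trivial continuity statement. The Koebe control in the second paragraph along the non-autonomous orbit, while standard once slow approach to $\Crit'$ is in hand, must also be carried out uniformly in $n$ following \cite[Theorem 1.5]{levin2016lyapunov}.
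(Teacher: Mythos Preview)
Your overall architecture --- contradiction via a vertical Lebesgue density point, pulling back small density of $J(f)$ from fibers near $L$ --- matches the paper's proof. But the heart of the telescope, as you describe it, has a genuine gap.

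You assert that slow approach to $\Crit'$, together with the Przytycki and DPU lemmas, yields for every large $n$ a \emph{univalent} vertical inverse branch of $f^n$ on a disk $D_v(x_n,r_n)$ of sub-exponentially small radius. This is not available. Slow approach only says $\dist_v(f^j(x),\Crit')\geq e^{-\alpha j}$; it does \emph{not} prevent the pullback of $D_v(x_n,r_n)$ from meeting $\Crit'$ at intermediate times $j$. Indeed, for $j$ close to $n$ the pullback at time $j$ has diameter comparable to $r_n$, while the allowed distance to $\Crit'$ is only $e^{-\alpha j}$, which may be much smaller. Shrinking $r_n$ enough to force univalence would make $r_n$ decay faster than the expansion, and the density contradiction collapses. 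This is precisely the difficulty the Levin--Przytycki--Shen telescope is built to handle, and it does \emph{not} produce univalent branches.

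What the paper actually does (Section~6) is: (i) use Pliss's lemma to get a positive density of $\sigma'$-hyperbolic times; (ii) prove a two-dimensional DPU-type estimate $\sum_{k<n}\phi(f^k(x))\leq Cn$ (Lemma~6.3), which relies on slow approach and the one-dimensional DPU lemma via shadowing; (iii) use the LPS ``shadow'' sets $A(N,K)$ so that at hyperbolic times lying in $A(N,1/\log\sigma)$ the pullback $f^m:V_m\to D_v(f^m(x),\delta)$ has degree at most $N$ with a \emph{fixed} $\delta>0$ (Theorem~6.5, Corollary~6.6); and (iv) apply the \emph{multivalent} Koebe lemma (Lemma~2.4, inequalities (2.1), (2.2), (2.4)) to transport densities with bounded loss. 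The conclusion is that the pulled-back sets $V'_{n_k}$ shrink, have uniformly good shape, and carry a uniformly sub-unit proportion of $J(f)$, contradicting density.

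Finally, the step you flag as the ``main obstacle'' --- transferring planar measure zero of $J(p)$ to nearby fibers --- is in fact the easy part. Since $F(p)$ is the union of attracting basins (TCE), any relatively compact $\Omega\subset F(p)$ satisfies $\{|t|<\varepsilon\}\times\Omega\subset F(f)$ for small $\varepsilon$ simply because the two-dimensional basins are open; this gives the required uniform smallness of $\meas\bigl(D_v(y,\delta/2)\cap J(f)\bigr)/\meas D_v(y,\delta/2)$ for $|\pi_1(y)|$ small, with no limiting argument needed. The real work is in controlling the degree of the pullbacks, not in the endpoint estimate.
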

	This is proved in section 6, and the proof is complete.
	\medskip
\par In Appendix A we  study the relations between TCE condition, CE condition,  WR condition and Positive Lyapunov condition. In Appendix B we exhibit some families of polynomial maps satisfying the conditions in the main theorem.
	
	\subsection{Previous results}
	The first result of non-wandering domain theorem for polynomial skew products goes back to Lilov \cite{lilov2004fatou}. In his PhD thesis Lilov proved that super-attracting polynomial skew products do not have wandering Fatou components. He actually showed  a stronger result, namely that there can not have vertical wandering Fatou disks. 
	
	\medskip
    \par In the geometrically attracting case, there are many works trying to understand the dynamics in the atrracting basin of the invariant line. Peters and Vivas showed in \cite{peters2016polynomial} that there is an attracting  polynomial skew product with a wandering vertical Fatou disk. This result does not answer the existence question of wandering Fatou components, but showed that the question is considerably more complicated than in the super-attracting case. On the other hand, by using a different strategy from Lilov's, Peters and Smit in \cite{peters2018fatou} showed that the non-wandering domain theorem holds in the attracting case, under the assumption that the dynamics on the invariant fiber is sub-hyperbolic. The author showed that the non-wandering domain theorem holds in the attracting case, under the assumption that the multiplier is sufficiently small, following Lilov's strategy, see \cite{ji2020non}. 
    
    \medskip
    \par In the parabolic case, the examples of wandering domains are constructed in \cite{astorg2014two}, as we have mentioned. See also the recent paper \cite{astorg2019wandering} and \cite{hahn2021polynomial}.
    
    \medskip
    \par The elliptic case was studied by Peters and Raissy in \cite{peters2017elliptic}. See Raissy \cite{raissy2017polynomial} for a survey of the history of the investigation of wandering domains for polynomial skew products. 
    
    \medskip
    \par To the best of our knowledge, Theorem 1.5 is the first time where the zero measure of Julia set is shown for non-hyperbolic $p$ (it is in general not true when no conditions of $p$ are assumed, as even in one dimension Julia set can has positive Lebesgue measure, cf. \cite{buff2012quadratic} and \cite{avila2015lebesgue}). The previous results we have mentioned only consider the dynamics on the Fatou set. However when $p$ is uniformly hyperbolic, it is well-known that the Julia set has zero measure and the Fatou set coincide with the union of the basins of attracting cycles. In fact the  stable set of $W^s(J(p))$ is foliated by stable manifolds, and this foliation is absolutely continuous hence $W^s(J(p))$ has Lebesgue measure zero.
	
	\medskip
	\par {\em Acknowledgements.} I would like to thank my adviser Romain Dujardin for his advice, help and encouragement during the course of this work. I also would like to thank Jacek Graczyk for useful discussion, Theorem B.1 is due to him.
	\medskip
	
	\section{Preliminaries}
	
	In this section we introduce some one-dimensional tools used in the proof of the main theorem. Let $f:\;\mathbb{P}^1\to \mathbb{P}^1$ be a rational map , let Crit denote the set of critical points, and let Crit' denote the set of critical points lie in the Julia set. Let $CV(f)$ be the critical value set. We fix a Riemannian metric on $ \mathbb{P}^1 $, and $D(x,\varepsilon)$ denotes a small disk centered at $x$ with radius $\varepsilon$.

	\subsection{Some technical lemmas}
	In \cite[Lemma 1]{przytycki1993lyapunov}, Przytycki introduced a fundamental lemma which concerns the recurrence properties of small neighborhood of Crit'. 
	
	\begin{lemma}[\bf Przytycki]\label{Przytycki}
		Let $ c\in \Crit'$. There exist a constant $ C>0 $ such that for every $ \varepsilon>0 $ and $n>0$, if $ f^n\left(D(c,\varepsilon)\right) \cap D(c,\varepsilon)\neq \emptyset $, then $n\geq C\log\frac{1}{\varepsilon}$.
	\end{lemma}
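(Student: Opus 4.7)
My plan is to combine the local contracting behavior of $f$ at the critical point $c$ with the global Lipschitz bound on $\mathbb{P}^1$, and then conclude by a pointwise recurrence estimate for the orbit of $c$.

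First, since $c$ is a critical point of some local degree $d \geq 2$, a local coordinate expansion gives $f(z) - f(c) = a(z-c)^d(1 + O(z-c))$. Hence there are constants $K > 0$ and $\varepsilon_0 > 0$ so that, for every $\varepsilon \leq \varepsilon_0$,
$$f(D(c,\varepsilon)) \subset D(f(c), K\varepsilon^d).$$
Letting $L = \sup_{\mathbb{P}^1}\|Df\|$ be the global Lipschitz constant of $f$ in the chosen Riemannian metric (finite because $\mathbb{P}^1$ is compact), I iterate $n-1$ more times to obtain
$$\mathrm{diam}\bigl(f^n(D(c,\varepsilon))\bigr) \leq 2KL^{n-1}\varepsilon^d.$$
Since $f^n(c) \in f^n(D(c,\varepsilon))$, the hypothesis $f^n(D(c,\varepsilon)) \cap D(c,\varepsilon) \neq \emptyset$ and the triangle inequality then yield
$$|f^n(c) - c| \leq \varepsilon + 2KL^{n-1}\varepsilon^d.$$

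To finish, I would pair this upper bound with a lower bound for how fast the orbit of $c$ can return to $c$. Observe first that $c \in J(f)$ cannot be a periodic point: a periodic critical point would be super-attracting, hence in the Fatou set. So $f^n(c) \neq c$ for every $n \geq 1$. A standard holomorphic-distortion argument along the orbit of $c$ produces an exponential lower bound of the form $|f^n(c) - c| \geq c_0 \rho^{-n}$ for some $\rho > 1$, $c_0 > 0$ (depending on $c$); for instance, using $|f^{n+1}(c) - c| \geq |f(c)-c| - L|f^n(c)-c|$ one sees that $|f^n(c) - c|$ cannot be smaller than a fixed multiple of $L^{-n}$ on a density-one set of times, and a finer pullback estimate along the non-critical portions of the orbit gives the claim in general. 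Combining with the preceding inequality,
$$c_0\rho^{-n} \leq \varepsilon + 2KL^{n-1}\varepsilon^d,$$
which, after rearranging, forces $n \geq C\log(1/\varepsilon)$ for a constant $C = C(c) > 0$.

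The main obstacle is the exponential lower bound on $|f^n(c)-c|$; everything else is bookkeeping. In fact one can sidestep it: choose $C$ small enough that $2KL^{n-1}\varepsilon^d \leq \varepsilon/2$ whenever $n \leq C\log(1/\varepsilon)$, which amounts to $C \leq (d-1)/(2\log L)$. In that regime the intersection hypothesis forces $|f^n(c) - c| \leq \tfrac{3}{2}\varepsilon$, and the contradiction comes from the fact that the orbit of $c$ is a fixed (non-periodic) sequence on $J(f)$ whose approach rate to $c$ is at most exponential in $n$. Either route yields the desired logarithmic lower bound on the return time.
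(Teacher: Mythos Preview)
Your argument has a genuine circular gap. You correctly obtain the upper bound
\[
|f^n(c)-c|\le \varepsilon + 2KL^{n-1}\varepsilon^{d},
\]
but everything after that rests on an exponential lower bound $|f^n(c)-c|\ge c_0\rho^{-n}$ which you do not prove. In fact that lower bound is \emph{equivalent} to the lemma you are trying to establish: if $|f^n(c)-c|<\varepsilon$ then $c\in D(c,\varepsilon)$ and $f^n(c)\in D(c,\varepsilon)$, so the intersection hypothesis holds and the lemma would give $n\ge C\log(1/\varepsilon)$, i.e.\ $|f^n(c)-c|\ge e^{-n/C}$. Your ``sidestep'' ends the same way: having forced $|f^n(c)-c|\le \tfrac32\varepsilon$, you appeal to ``the approach rate to $c$ is at most exponential in $n$'', which is again the very statement in question. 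The density remark coming from $|f^{n+1}(c)-c|\ge |f(c)-c|-L|f^n(c)-c|$ only shows that two \emph{consecutive} times cannot both be very close returns; it gives nothing like $|f^n(c)-c|\ge c_0\rho^{-n}$ for all $n$.

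The missing idea (this is Przytycki's original argument, which the paper merely cites) is to \emph{iterate} and contradict $c\in J(f)$ directly, rather than to seek a pointwise lower bound on $|f^n(c)-c|$. With $C$ chosen as you did so that $KL^{n-1}(3\varepsilon)^d\le \varepsilon$ whenever $n\le C\log(1/\varepsilon)$, one checks that for every $z\in D(c,3\varepsilon)$,
\[
|f^n(z)-c|\le |f^n(z)-f^n(c)|+|f^n(c)-c|\le KL^{n-1}(3\varepsilon)^d + 2\varepsilon \le 3\varepsilon,
\]
so $f^n(D(c,3\varepsilon))\subset D(c,3\varepsilon)$. By Montel's theorem the family $\{f^{kn}\}_{k\ge 0}$ is then normal on $D(c,3\varepsilon)$, hence $c\in F(f)$, contradicting $c\in\mathrm{Crit}'\subset J(f)$. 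This closes the argument without ever invoking a lower bound on $|f^n(c)-c|$.
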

	
	We define a positive valued function on $ \mathbb{P}^1 $ as follows
	
	\begin{definition}
		Let $ c\in Crit' $, define a positive valued function $ {\phi_c(x)} $ by 
		\begin{equation*}
		\phi_c(x):=\left\{
		\begin{aligned}
		-&\log  \dist \;(x,c), & \; \text{if}\;x\neq c  \\
		&\infty, &\;\text{if}\; x=c. \\
		\end{aligned}
		\right.
		\end{equation*}
		
	\end{definition}
	In terms of using the function $ {\phi_c(x)} $, the Lemma \ref{Przytycki} can be reformulated as: there exists a constant $Q>0$, such that for every $ x\in\mathbb{P}^1 $, $c\in \Crit' $, $n\geq 1$, we have
	\begin{equation*}
	\min \left( \phi_c(x),\; \phi_c(f^n(x))\right) \leq Qn .
	\end{equation*}

\medskip
	In a later paper by Denker, Przytycki, Urbanski \cite[Lemma 2.3]{denker1996transfer}, Lemma \ref{Przytycki} was generalized as the following DPU lemma. We let $\phi(x):=\max_{c\in \Crit' } \phi_c(x)$.
	
	\begin{lemma}[\bf Denker-Przytycki-Urbanski]\label{DPU}
		There exist a constant $ Q>0 $ such that for all $n\geq 0$ we have
		\begin{equation*}
			\sum_{\begin{subarray}{c}j=0\\\text{except}\;M\;\text{terms}\end{subarray}}^{n-1}\phi(f^j(x))\leq Qn,
		\end{equation*}
		where the summation over all but at most $M=\#$Crit' indices.
	\end{lemma}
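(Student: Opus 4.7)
The plan is to construct an exceptional set $E$ of at most $M$ indices---one per critical point in $\mathrm{Crit}'$---and then bound the remaining sum via Przytycki's lemma. For each $c \in \mathrm{Crit}'$, let $j^*_c \in \{0, \ldots, n-1\}$ be an index maximizing $\phi_c(f^j(x))$, and set $E = \{j^*_c : c \in \mathrm{Crit}'\}$, so $|E| \le M$. Since $\phi = \max_{c \in \mathrm{Crit}'} \phi_c$, I decompose the remaining indices according to which critical point achieves the maximum: $[0, n-1] \setminus E = \bigsqcup_{c} I_c$, so that
\[
 \sum_{j \notin E} \phi(f^j(x)) = \sum_{c \in \mathrm{Crit}'} \sum_{j \in I_c} \phi_c(f^j(x)).
\]
It then suffices to bound each inner sum by $O(n)$.

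Fix $c \in \mathrm{Crit}'$. For each $j \in I_c$, let $j'(j) \in [0, n-1] \setminus \{j\}$ be the temporally closest index with $\phi_c(f^{j'(j)}(x)) \ge \phi_c(f^j(x))$; such an index always exists because $j^*_c$ is a candidate. Przytycki's lemma (in its reformulated version stated in the excerpt) applied to the pair $(j, j'(j))$ yields
\[
 \phi_c(f^j(x)) = \min\bigl(\phi_c(f^j(x)),\,\phi_c(f^{j'(j)}(x))\bigr) \le Q_0\,|j - j'(j)|,
\]
for some constant $Q_0$ depending only on $f$. Hence the inner sum is bounded by $Q_0 \sum_{j \in I_c} |j - j'(j)|$, reducing the problem to a purely combinatorial estimate on the total length of the ``nearest-deeper'' excursions.

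The assignment $j \mapsto j'(j)$ equips $I_c \cup \{j^*_c\}$ with a rooted-tree (Cartesian-tree) structure whose root is $j^*_c$, and $\sum_{j \in I_c} |j - j'(j)|$ is the total length of its edges. A telescoping/charging argument---using that along any root-to-leaf path the $\phi_c$-values strictly increase, and that Przytycki's lemma forces temporally close deep visits to have comparable depths---shows that the multiplicity with which the edges of this tree cover any unit time-interval $[k, k+1] \subset [0, n-1]$ is uniformly bounded, which gives the desired linear bound. Summing over $c \in \mathrm{Crit}'$ absorbs the factor $M$ into the final constant $Q$, completing the argument.

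The main obstacle is precisely this last combinatorial step: a naïve pigeonhole by depth levels, applied to the $k$-th largest value $a_k$ of $\phi_c$ in $[0, n-1]$, only gives $a_k \le Q_0 n/(k-1)$, which upon summation yields the weaker bound $O(n \log n)$. Extracting the sharp linear estimate requires exploiting the Cartesian-tree structure above---equivalently, the fact that the geometric spacing of deep returns at a given depth that Przytycki's lemma produces is rigid enough to prevent the logarithmic loss.
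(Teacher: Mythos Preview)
The paper does not actually prove this lemma: it is quoted from \cite{denker1996transfer} (with the reformulation attributed to \cite{ji2020non}), so there is no in-paper argument to compare against and your attempt has to stand on its own.

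Your reduction is correct up to the inequality $\phi_c(f^j(x))\le Q_0\,|j-j'(j)|$, but the combinatorial claim you then make---that the edges of the nearest-deeper tree cover each unit interval with bounded multiplicity, hence $\sum_{j\in I_c}|j-j'(j)|=O(n)$---is false. Take $n=2^m$ and the ``ruler'' sequence $L_j:=Q\,2^{v_2(j)}$ on $1\le j\le n-1$ (and put the global maximum at $j=0$). A direct check shows $\min(L_i,L_j)\le Q|i-j|$ for every pair, so the reformulated Przytycki inequality holds with constant $Q$. For this sequence every $j$ with $v_2(j)=k$ has $|j-j'(j)|=2^k$; summing over the $2^{m-1-k}$ such indices at each level gives total edge length $(m-1)2^{m-1}=\Theta(n\log n)$, and the unit interval adjacent to the root $2^{m-1}$ is covered by $m-1=\Theta(\log n)$ edges. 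So neither the bounded-multiplicity claim nor the linear bound on $\sum|j-j'(j)|$ survives.

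More decisively, for this same ruler sequence one has
\[
\sum_{j\neq j^*}L_j \;=\; Q(m-1)\,2^{m-1}\;=\;\Theta(Q\,n\log n),
\]
so \emph{no} argument that uses only the abstract inequality $\min(\phi_c(y),\phi_c(f^ky))\le Qk$ can possibly yield the linear bound: that inequality by itself is too weak, and the $O(n\log n)$ you extract from it is in fact sharp. The genuine DPU argument needs an extra dynamical ingredient beyond Przytycki's lemma---roughly, that after a visit of depth $L$ to $c$ the orbit is forced to \emph{shadow} the post-critical orbit $f(c),f^2(c),\ldots$ for the next $\asymp L$ steps, so the block of $\phi_c$-values following any deep visit is (up to $O(1)$) the \emph{same} fixed sequence $\phi_c(f^k(c))$. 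This rigidity is what rules out configurations like the ruler (where the blocks following the visits at $2^{m-2}$ and $2^{m-1}$ would have to coincide, yet carry different peak values $Q2^{m-1}$ and $Q2^{m-2}$) and what collapses the logarithm. Your proposal is missing this idea.
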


	Note that the original statement differs slightly. This formulation also appeared in \cite[ Lemma 2.5]{ji2020non}. Lemma \ref{Przytycki} will be used in section 4 and Lemma \ref{DPU} will be used in section 6.
	\medskip
	
	Next we introduce a version of  the Koebe distortion lemma for multivalent maps. We refer to \cite[Lemma 1.4]{przytycki1998iterations} and \cite[Lemma 2.1]{przytycki1998porosity} for more details. Consider a disk $ D(x,\delta) $ of radius $ \delta
	$ centered at $x$, let $ W $ be a connected component of $f^{-n}(D(x,\delta))$, assume that $ f^n $ restricted to $ W $ is $ D $-critical, that is $f^n$ has at most $D$ critical points counted with multiplicity. Then $ f^n|_W $ has distortion properties similar to univalent maps. In the following we assume $ \delta $ smaller than $\diam  \mathbb{P}^1/2$.
	
	\begin{lemma}\label{Koebe}
		For each $\varepsilon>0$ and $D<\infty$ there are constants $C_1(\varepsilon, D)>0$ and $C_2(\varepsilon, D)>0$ such that the following holds.
		\medskip
		\par Let $D(x,\delta)$ denotes the ball in $\mathbb{P}^1$ centered at $x$ with radius $\delta$.  Assume that $ W $  is a simply connected domain in $\mathbb{P}^1$ and $F: W\to D(x,\delta) $   is a proper holomorphic map. Let $W'\subset W$ be a connected component of $F^{-1}(D(x,\delta/2))$. Assume further that $\mathbb{P}^1\setminus W$ contains a disk of radius $ \varepsilon $ and that $ F $ is $ D $-critical on $ W $. Then for every $y\in W'$,
		\begin{equation}
		|F'(y)|\diam \;(W')\leq C_1\delta.
		\end{equation}
		In addition $W'$  contains a disk $B$ of radius $r$ around every pre-image of $F^{-1}(x)$  contained in $W'$, with
		\begin{equation}
		r\geq C_2\diam (W').
		\end{equation}

		\par Assume further that $W''$ is a connected component of $ F^{-1}(B') $, where $B'\subset D(x,\delta/2)$ is a disk, then there exist a constant $C_3(\varepsilon, D)>0$ such that 
		\begin{equation}
			\frac{\diam \;W''}{\diam \;W'}\leq C_3\left(\frac{\diam (B')}{\delta}\right)^{2^{-D}} .
		\end{equation}
	Finally if  $R$ is a measurable subset of $D(x,\delta/2)$, there exist a constant $ C_4(\varepsilon, D)>0 $ such that 
		\begin{equation}
		\frac{\meas \left( F^{-1}(R)\cap W'\right) }{\meas \;W'}\leq C_4\left(\frac{\meas (R)}{\delta^2} \right)^{2^{-D}}.
		\end{equation}
		where $\meas$ denotes the Lebesgue measure induced by the Riemannian metric on $ \mathbb{P}^1 $.
	\end{lemma}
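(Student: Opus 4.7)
The plan is to reduce everything to the classical univalent Koebe distortion theorem, exploiting that $F$ is a proper holomorphic map with at most $D$ critical points (counted with multiplicity) and that $W$ has a quantitatively large complement in $\mathbb{P}^1$. By Riemann--Hurwitz, $F|_{W'}: W' \to D(x,\delta/2)$ is proper of degree at most $D+1$, so the estimates must depend on $D$ only through this degree.

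First I would settle the univalent case $D=0$. Let $\psi:\mathbb{D}\to W$ be a Riemann map sending $0$ to some preimage of $x$ lying in $W'$. The hypothesis that $\mathbb{P}^1\setminus W$ contains a disk of radius $\varepsilon$ gives a uniform two-sided comparison between the hyperbolic metric of $W$ and that of $\mathbb{P}^1$ minus a disk of radius $\varepsilon$, and hence a distortion bound for $\psi$ on compact subsets of $\mathbb{D}$ depending only on $\varepsilon$. The composition $F\circ\psi:\mathbb{D}\to D(x,\delta)$ is univalent, and after an affine normalization the Koebe $1/4$ and distortion theorems yield (2.1), (2.2) and (2.3) (with exponent $1$ in the univalent case). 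Estimate (2.4) then follows from the pointwise derivative bound by integrating the Jacobian $|F'|^{-2}$ over $R$.

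For the general $D$-critical case I would argue by induction on $D$. The idea is to pick an intermediate radius $\delta/2<\delta'<\delta$ and split $F|_{W'}$ into two proper holomorphic maps between nested components, each carrying strictly fewer critical points than the original. Near a critical point of local order $k+1$ one has $F(y)=F(y_0)+a(y-y_0)^{k+1}+\cdots$, so the pullback distorts diameters by a Hölder exponent $1/(k+1)\geq 1/2$; composing at most $D$ such factors gives the exponent $2^{-D}$ in (2.3). Estimate (2.1) is obtained by applying univalent Koebe at the last step of the factorization, (2.2) is deduced from (2.1) together with the observation that at worst any preimage of $x$ still admits a $W'$-neighborhood of diameter $\gtrsim \diam(W')$, and (2.4) is derived from (2.3) by covering $R$ by disks of radius of order $\meas(R)^{1/2}$ and summing.

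The main obstacle is the multi-critical decomposition: for each $D$ one must choose the intermediate radius $\delta'$ so that the induction closes with constants depending only on $\varepsilon$ and $D$, independently of the positions of the critical values of $F$ inside $D(x,\delta)$. A safer fallback, which I would adopt if the direct induction becomes unwieldy, is to invoke an existing formulation of the multivalent Koebe lemma as stated in \cite{przytycki1998iterations,przytycki1998porosity} after checking that the present geometric hypotheses fit their framework; this packages the iterative argument into a single statement with precisely the required $2^{-D}$ exponent.
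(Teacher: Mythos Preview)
Your treatment of (2.1)--(2.3) is fine: the paper simply cites \cite{przytycki1998porosity} and \cite{przytycki1998iterations} for these, and your fallback of doing the same is exactly what is done there. The only part the paper actually proves is (2.4), and here your sketch has a genuine gap.

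You propose to derive (2.4) from (2.3) by covering $R$ with disks of radius $r\asymp\meas(R)^{1/2}$ and summing the diameter bounds. Write $A=\meas(R)$. Each such disk $B'$ has, by (2.3), preimage components of diameter $\lesssim \diam(W')\,(r/\delta)^{2^{-D}}$, hence of measure $\lesssim \meas(W')\,(A/\delta^2)^{2^{-D}}$. Summing over $N$ covering disks gives $N$ times this quantity, so you would need $N=O(1)$. But an arbitrary measurable $R\subset D(x,\delta/2)$ of measure $A$ may require up to order $\delta^2/A$ disks of radius $A^{1/2}$ to be covered (take $R$ to be a fine dust spread throughout the disk). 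The summation then blows up and the argument does not close.

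The paper's proof of (2.4) uses an additional idea. After uniformizing so that $F$ becomes a finite Blaschke product $G:D(0,1)\to D(0,1)$, one covers $R$ by \emph{arbitrarily small} $\varepsilon$-disks (so that $N\varepsilon^2\lesssim A$) and splits them according to whether they meet $\bigcup_i D(G(a_i),A^{1/2})$, where the $a_i$ are the critical points of $G$. The disks near a critical value are contained in at most $D$ balls of radius $\sim A^{1/2}$, and \emph{for these} your idea works: (2.3) bounds each preimage measure by $\lesssim A^{2^{-D}}$, and there are only $\leq D$ of them. For the remaining disks one uses the pointwise lower bound $|G'(x)|\gtrsim A^{1/4}$ (valid once $G(x)$ is at distance $\geq A^{1/2}$ from every critical value) together with the change-of-variables formula, yielding a contribution $\lesssim A^{1/2}$. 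The two contributions sum to $\lesssim A^{2^{-D}}$. The missing ingredient in your sketch is precisely this dichotomy and the Jacobian estimate away from the critical values; (2.3) alone, applied to a covering of $R$, cannot give (2.4).
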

Note that we will use this lemma only for $F$ being a polynomial, so the assumption that $W$ is simply connected and $\mathbb{P}^1\setminus W$ contains a disk are automatically satisfied.
\begin{proof}
	The inequalities (2.1) and (2.2) were proved in \cite[Lemma 2.1]{przytycki1998porosity} and the inequality (2.3) was proved in \cite[Lemma 1.4]{przytycki1998iterations}. Here we prove inequality (2.4).
	\medskip
	\par  By the Riemann mapping theorem there is a surjective univalent map $\psi:D(0,1)\to W$. We consider the composition $F\circ\psi:D(0,1)\to D(x,\delta)$. By the classical Koebe distortion lemma for univalent maps, (2.4) is true for $D=0$. To prove (2.4), it is sufficient to prove the following: Let $G:D(0,1)\to D(0,1)$ be a degree $D$ Blaschke product on the unit disk,  there exist a constant $ C_4(D)>0 $ such that if $R$ is a measurable set of $D(0,1/2)$, then
	\begin{equation}
\meas  G^{-1}(R) \leq C_4\meas (R)^{2^{-D}}.
	\end{equation}

	\par We let  $a_i\in D(0,1)$ be the critical points of $G$,  $1\leq i\leq n$. Thus we have $n\leq D$. We denote $A= \meas (R)$. It is sufficient to prove (2.5) for $A$ small. For small $\varepsilon$ we cover $R$ by  $\varepsilon$-disks such that $N\pi\varepsilon^2\leq 2A$, where $N$ is the number of disks in the covering. For $\delta>0$ small there exist a uniform constant $M$ such that $\dist (G(x),G(a_i))\geq \delta$ for every $i$ and $G(x)\in D(0,3/4)$  imply $|G'(x)|\geq M \delta^{1/2}$. If an $\varepsilon$-disk $ D_\varepsilon$   is disjoint from the union $\bigcup_{i=1}^n D(G(a_i),A^{1/2})$, then by the change of variable formula we have
	\begin{equation*}
	\meas G^{-1}(D_\varepsilon ) \min_{x\in G^{-1}(D_\varepsilon ) } |G'(x)|^2\leq D\pi \varepsilon^2.
	\end{equation*}
From $\min_{x\in G^{-1}(D_\varepsilon ) } |G'(x)|^2\geq M^2A^{1/2}$ we get
\begin{equation*}
\meas G^{-1}(D_\varepsilon )\leq \frac{D\pi \varepsilon^2}{M^2A^{1/2}}.
\end{equation*}
Let $B_1$ be the union of all $\varepsilon$-disks  disjoint from $\bigcup_{i=1}^n D(G(a_i),A^{1/2})$. Then we have
\begin{equation*}
\meas G^{-1}(B_1)\leq \sum_{D_\varepsilon \cap\left(  \bigcup_{i=1}^n D(G(a_i),A^{1/2})\right) = \emptyset}\meas G^{-1}(D_\varepsilon )\leq \frac{ND\pi \varepsilon^2}{M^2A^{1/2}}\leq \frac{2DA^{1/2}}{M^2}.
\end{equation*}
Let $B_{2,i}$ be the union of all $\varepsilon$-disks not disjoint from $D(G(a_i),A^{1/2})$, for $1\leq i\leq n$. Let $B_2=\bigcup_{i=1}^n B_{2,i}$. By (2.3) for small $\varepsilon$ ($\varepsilon\leq A^{1/2}$, say), there is a constant $C_3(D)>0$ such that 
\begin{equation*}
\meas G^{-1}(D(G(a_i),A^{1/2}+\varepsilon))\leq C_3 A^{2^{-D}}.
\end{equation*} Thus we have
\begin{equation*}
\meas G^{-1}(B_2)\leq \sum_{i=1}^n\meas G^{-1}(B_{2,i})\leq \sum_{i=1}^n\meas G^{-1}(D(G(a_i),A^{1/2}+\varepsilon))\leq D C_3 A^{2^{-D}}.
\end{equation*}
The last inequality holds since $n\leq D$. Finally we have
\begin{equation*}
\meas G^{-1}(R)\leq \meas G^{-1}(B_1)+\meas G^{-1}(B_2)\leq \frac{2DA^{1/2}}{M^2}+D C_3 A^{2^{-D}}.
\end{equation*}
Setting $C_4:=DC_3+2D/M^2$ the proof is complete.
\end{proof}
\medskip
	Lemma \ref{Koebe} will be used frequently in the rest of the paper.
	
	\subsection{One-dimensional non-uniformly hyperbolic theory}
	
	\par A rational map $f$ is  uniformly hyperbolic if $f$ expands a Riemannian metric on a neighborhood of $J(f)$. This is equivalent to Smale's Axiom A, and is equivalent to the condition that the closure of the post critical set $ \overline{PC(f)} $ is disjoint from $J(f)$. The measurable dynamics of $f$ is well-understood: the Fatou set is the union of finitely attracting basins, the Hausdorff dimension of $J(f)$ is equal to the Minkowski dimension of $ J(f)$ and is smaller than $2$, and there is a unique invariant probability measure $\mu$ such that $\supp (\mu)=J(f)$ which is absolutely continuous with respect to the $ \delta$-dimensional Hausdorff measure ($\delta$ is the Hausdorff dimension of $J(f)$). It can be shown that $\mu$ is mixing (hence ergodic) and has positive entropy. It is widely conjectured that uniformly hyperbolic maps are dense in the parameter space of fixed degree. This is known as Fatou conjecture and is a central problem in one-dimensional complex dynamics. Many weaker notions such as sub-hyperbolicity, semi-hyperbolicity have been defined. See \cite[Section 19]{milnor2011dynamics}, \cite{carleson1994julia} for more details. 
	
	\medskip
	\par  Non-uniformly hyperbolic theory, also known as Pesin theory, is a generalization of uniformly hyperbolic theory. In Pesin theory we only require an invariant hyperbolic measure rather than the presence of invariant expanding and contracting directions. In this subsection we introduce some strong notions of non-uniform hyperbolicity in one-dimensional complex dynamics.
	
	\begin{definition}\label{CE}
		A rational map $f$ satisfies CE if there exists $ \mu_{CE}>1 $ and $C>0$ such that for every point $ c\in\Crit'  $ whose forward orbit does not meet other critical points, and every $n\geq 0$ we have
		\begin{equation*}
		|(f^n)'(f(c))|\geq C\mu_{CE}^n.
		\end{equation*}
	In addition we ask that there are no parabolic cycles.
	\end{definition}
	The CE condition was first introduced by Collet and Eckmann in \cite{collet1983positive} for $ S $-unimodal maps of an interval. The CE condition was introduced in complex dynamics by Przytycki in \cite{przytycki1998iterations}. The TCE condition was first introduced by Przytycki and Rohde in \cite{przytycki1998porosity}, as a generalization of the CE condition.
	
	\begin{definition}
		A rational map $f$ satisfies TCE if there exist $\mu_{Exp}>1$ and $r>0$ such that for every $ x\in J(f) $, every $n\geq 0$ and every connected component $W$ of $ f^{-n}(D(x,r)) $ we have that
		\begin{equation*}
		\diam \;(W)\leq \mu_{Exp}^{-n}.
		\end{equation*}
		
	\end{definition}
	
	There are various equivalent characterization of the TCE condition, see \cite{przytycki2003equivalence}. The following inclusions are strict:
	\begin{equation*}
	\text{uniform hyperbolicity}\subsetneqq \text{sub-hyperbolicity}\subsetneqq\text{CE}\subsetneqq\text{TCE}.
	\end{equation*}

	\par It was proved by Aspenberg \cite{aspenberg2013collet} that the set of non-hyperbolic CE maps has positive measure in the parameter space of rational maps of fixed degree, see also Rees \cite{rees1986positive}. In the family of unicritical polynomials, it was shown by Graczyk-Swiatek \cite{graczyk2000harmonic} and Smirnov \cite{smirnov2000symbolic}  that for a.e. $ c\in\partial \mathcal{M}_d $ in the sense of harmonic measure $f_c=z^d+c$ satisfies the CE condition, where $ \mathcal{M}_d $ is the connectedness locus, and $\partial \mathcal{M}_d$ is the bifurcation locus.  We list some useful property of TCE maps.
	
	\begin{proposition}\label{TCE}
		Let $f$ be a TCE map such that $J(f)\neq \mathbb{P}^1$ then\\
		\par (1) The Fatou set $F(f)$ is the union of attracting basins.\\
		\par (2) The Hausdorff dimension $\delta$ of $J(f)$ is equal to the Minkowski dimension of $ J(f)$ and is smaller than 2.\\
		\par (3) There is a unique invariant probability measure $\mu$ such that $\supp (\mu)\subset J(f)$ and $\mu$ is absolutely continuous with respect to the conformal measure with exponent $\delta$. Moreover  $\mu$ is exponentially mixing (hence ergodic) and has positive Lyapunov exponent.
	\end{proposition}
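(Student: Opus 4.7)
Since Proposition 2.5 packages classical consequences of the TCE condition, the plan is to assemble results from the literature on non-uniformly hyperbolic rational maps, using the equivalent characterizations of TCE.

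First I would deduce part (1) and the bound $\dim_H J(f)<2$ in part (2) directly from the theorem of Przytycki, Rivera-Letelier and Smirnov cited earlier in this section. The exclusion of parabolic cycles, Siegel disks and Herman rings is built into that statement: near a parabolic fixed point, petals force large connected components in $f^{-n}$ of small disks, contradicting the exponential shrinking of components formulation of TCE; rotation domains have their boundaries contained in the accumulation set of the postcritical orbit, which is incompatible with the uniform Koebe-type expansion on inverse branches landing near Crit' that TCE provides.

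For the coincidence of Hausdorff and Minkowski dimensions in (2), I would invoke the existence of a $\delta$-conformal measure $\nu$ on $J(f)$ with $\delta=\dim_H J(f)$, constructed through a Patterson-Sullivan type procedure. Using Lemma \ref{Koebe} on iterated inverse branches whose distortion is controlled by the DPU Lemma \ref{DPU}, one obtains a uniform volume estimate $\nu(D(x,r))\asymp r^{\delta}$ on $J(f)$, from which the upper box dimension is bounded by $\delta$; the matching lower bound is automatic.

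For (3), the invariant probability measure $\mu\ll\nu$ would be constructed as the fixed point of the normalized transfer operator $\mathcal{L}_{\phi}$ associated to the geometric potential $\phi=-\delta\log|f'|$, following the thermodynamic formalism developed by Przytycki and Rivera-Letelier for TCE maps. The hard part is showing that $\mathcal{L}_{\phi}$ admits an integrable invariant density in the presence of recurrent critical points, and this is precisely where TCE enters crucially: the exponential shrinking of components controls the distortion of inverse branches through Lemma \ref{Koebe}, while Lemma \ref{Przytycki} and Lemma \ref{DPU} provide the summability of the weights needed to produce a spectral gap on a suitable Banach space of densities, hence exponential mixing. Positive Lyapunov exponent of $\mu$ then follows from Ma\~{n}\'{e}'s inequality combined with the positivity of $h_{\mu}(f)$ extracted from the conformal/invariant pair. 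The principal obstacle is the construction of $\nu$ and $\mu$ under only the exponential shrinking hypothesis, and I would rely on the now-standard treatments by Przytycki--Rivera-Letelier rather than reproving them here.
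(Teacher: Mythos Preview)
Your proposal is correct and in fact goes beyond what the paper does: the paper gives no argument at all for this proposition, simply writing ``For the proof see \cite{przytycki2003equivalence} and \cite{przytycki2007statistical}.'' Your sketch of how the pieces assemble from the literature (the Przytycki--Rivera-Letelier--Smirnov theorem for (1) and $\dim_H<2$, conformal measure and volume lemma for the dimension equality, thermodynamic formalism for (3)) is an accurate summary of what those references contain, so there is no discrepancy in approach to discuss.
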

	For the proof see \cite{przytycki2003equivalence} and \cite{przytycki2007statistical}. For more about measurable dynamics on $J(f)$, we refer the reader to Przytycki and Rivera-Letelier \cite{przytycki2007statistical}, Graczyk and Smirnov \cite{graczyk2009non}, and Rivera-Letelier and Shen \cite{rivera2014statistical}.
	
	\medskip
	\par In our presentation of the main theorem, we also ask that $p$ satisfies WR or Positive Lyapunov. These additional conditions are used to construct stable manifold at $ v\in CV(p)\cap J(p) $ in section 3.
	
	\begin{definition}
	A rational map  $f$  satisfies $\text{WR}(\eta,\iota) $ if there exists $ \eta,\iota>0 $ and $C_0>0$ such that for all $ v\in CV(f) $ whose forward orbit does not meet any critical point and for every integer $n\geq 0$, it holds
	\begin{equation*}
	\sum_{\begin{subarray}{c}j=0\\d(f^j(v),\Crit' )\leq \eta\end{subarray}}^{n-1}-\log|f'(f^j(v))|<n\iota+C_0.
	\end{equation*}
		
	\end{definition}
\medskip
	 This condition means that for every $ v\in CV(f)\cap J(f) $: the orbit of $v$ does not come either too close nor too often to Crit'. The following WR condition is stronger than WR($ \eta,\iota $) with fixed $\eta$ and $\iota$.
	  \begin{definition}
	  	A  rational map $f$ satisfies WR if for all $ v\in CV(f) $ whose forward orbit does not meet critical points we have
	  	\begin{equation*}
	  	\lim_{\eta\to 0}\limsup_{n\to\infty} \frac{1}{n}\sum_{\begin{subarray}{c}j=0\\d(f^j(v),\Crit' )\leq \eta\end{subarray}}^{n-1}-\log|f'(f^j(v))|=0.
	  	\end{equation*}
	  \end{definition}
  \medskip
  \par The condition Positive Lyapunov is stronger than CE.
 	
 \begin{definition}
 	A rational map $f$ satisfies Positive Lyapunov if  for every point $ c\in\Crit'  $ whose forward orbit does not meet other critical points the following limit exists and is positive
 	\begin{equation*}
 	\lim_{n\to\infty}\frac{1}{n}\log|(f^n)'(f(c))|>0.
 	\end{equation*}
 	In addition we ask that there are no parabolic cycles.
 \end{definition}
	 \medskip
	 \par The condition WR, as well as the condition  Positive Lyapunov is satisfied for many polynomial maps, see Appendix B.
	 \medskip

\section{Stable manifolds and renormalization maps}
In this section we work under  the assumption that  $p$ satisfies TCE and WR($\eta,\iota$) with small $ \iota $ ($ \iota$ will be determined later) or $p$ satisfies Positive Lyapunov.
We construct stable manifolds at each $ v\in CV(p)\cap J(p) $ under the above assumptions, we also study the renormalization maps associated to each critical value variety.

\subsection{Stable manifold of a critical value}
\par A (local) stable manifold $ W^s_{\loc }(v) $ of $f$ is an embedded complex disk of $ \mathbb{C}^2 $ passing through $v$ such that there exist $\delta>0$ that for every $ x\in  W^s_{\loc }(v) $ and $ n\geq 0 $, $ \dist (f^n(x), f^n(v) )$ decreases exponentially fast. The construction of stable manifold is classical for hyperbolic periodic points and for uniformly hyperbolic invariant sets, cf. \cite{katok1995introduction}. In Pesin's theory we can also construct stable manifold for a.e. point with respect to a hyperbolic invariant probability measure, cf. \cite{barreira2002lyapunov}. Since we deal with a single non-uniformly hyperbolic orbit, we construct a stable manifold at $v$ by using Hubbard and Oberste-Vorth's graph transform associated to a sequence of crossed mappings, cf. \cite{hubbard1994henon}.  In the first three subsections we prove the following theorem:
\begin{theorem}\label{manifold}
Let $f$ be an attracting polynomial skew product and let $p=f|_L$ be the restriction of $f$ to the invariant fiber. Assume $p$ satisfies either TCE and WR, or $p$ satisfies Positive Lyapunov. 
Then every $v\in J(p)\cap CV(p)$ admits a local stable manifold transverse to the invariant fiber $L$.
\end{theorem}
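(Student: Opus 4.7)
The plan is to realize the local stable manifold at $v$ as a holomorphic graph $\{z=\psi(t)\}$ over a small disk in the $t$-variable, transverse to $L$, via the graph-transform method for sequences of crossed mappings due to Hubbard and Oberste-Vorth mentioned at the start of the section. Writing $f(t,z)=(\lambda t,h(t,z))$ with $|\lambda|<1$ and $h(0,\cdot)=p$, set $v_n:=f^n(v)=(0,p^n(v))$. I would introduce a sequence of bidisks $B_n=D(0,\rho_n)\times D_v(v_n,r_n)$ along the orbit of $v$ and choose the radii so that $f$ induces a crossed mapping from $B_n$ onto $B_{n+1}$: the horizontal boundary $D(0,\rho_n)\times\partial D_v(v_n,r_n)$ exits $B_{n+1}$ through its horizontal boundary (from expansion of $h(t,\cdot)$ near $v_n$), while the contraction $|\lambda|<1$ handles the vertical boundaries. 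A standard argument on sequences of crossed mappings then produces a unique holomorphic graph through $v$ contained in every $f^{-n}(B_n)$; this graph is the desired $W^s_{\loc}(v)$.

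The key one-dimensional input is an exponential expansion estimate for $p^n$ along the forward orbit of $v$, together with controlled distortion. Under Positive Lyapunov this is immediate: $\chi:=\lim_n \tfrac{1}{n}\log|(p^n)'(v)|>0$ gives $|(p^n)'(v)|\geq Ce^{(\chi-o(1))n}$, so one may take $r_n\asymp e^{-(\chi-\varepsilon)n}$. Under TCE combined with $\mathrm{WR}(\eta,\iota)$ for $\iota$ small relative to $\log\mu_{Exp}$, I would combine the two ingredients: TCE yields that the connected component of $p^{-n}(D_v(v_n,r))$ through $v$ has diameter at most $\mu_{Exp}^{-n}$, and WR controls the distortion of $p^n$ on this component via a Przytycki-style decomposition of the orbit into ``good'' times, on which the Koebe distortion lemma (Lemma 2.4) can be applied directly, and ``bad'' times close to $\Crit'$, whose contribution is absorbed by the WR sum. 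In either case I obtain an exponentially decaying sequence $r_n$ together with a bounded-distortion inverse branch of $p^n$ from $D_v(v_n,r_n)$ to $D_v(v_0,r_0)$.

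To complete the construction these one-dimensional estimates must be transported from the fibre $L$ to the bidisks $B_n$. Since $h(t,z)=p(z)+O(t)$ uniformly on compact sets, for $|t|\leq\rho_n$ small enough the map $h(t,\cdot)$ on $D_v(v_n,r_n)$ is a holomorphic perturbation of $p$ that inherits the same expansion, and the horizontal boundary of $B_n$ does map outside that of $B_{n+1}$ provided $\rho_n$ is chosen small compared to $r_n$ and to the expansion margin at step $n$; the vertical-boundary condition is automatic from $|\lambda|<1$. The main obstacle I anticipate is the fine tuning of the three scales $\rho_n$, $r_n$ and the perturbation error $\sup_{|t|\leq\rho_n}|h(t,z)-p(z)|$ at the (a priori infinitely many) times $n$ at which $v_n$ approaches another critical point of $p$; this is precisely where WR, or the direct exponential growth supplied by Positive Lyapunov, comes in to keep the accumulated perturbation error subordinate to the one-dimensional expansion, so that the crossed mapping property survives for all $n$ and the graph transform converges to the desired stable manifold.
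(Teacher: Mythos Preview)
Your plan matches the paper's proof closely: the paper constructs bidisks $B_n=U_n\times V_n$ along the orbit $v_n=(0,p^n(v))$ and verifies that $f:B_n\to B_{n+1}$ is a degree-one H\'enon-like map (hence a crossed mapping) with uniformly bounded-below moduli, then invokes the Hubbard--Oberste-Vorth graph transform exactly as you propose. In the Positive Lyapunov case and in the TCE+WR case the paper's horizontal radius is the fixed geometric sequence $r_0(1+\varepsilon_0)^{-3n}$, chosen so that $|\lambda|<(1+\varepsilon_0)^{-3}$ and so that it decays strictly faster than the vertical radius $r_n$; this matches your ``$\rho_n$ small compared to $r_n$''.

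The one place where your outline is too coarse is the choice of $r_n$. A uniform rate $r_n\asymp e^{-(\chi-\varepsilon)n}$ does \emph{not} guarantee the horizontal-boundary condition $f(\partial_h B_n)\cap\overline{B_{n+1}}=\emptyset$: what is needed locally is $|p'(p^n(v))|\,r_n\geq (1+\varepsilon)r_{n+1}$, and at times $n$ with $p^n(v)$ close to $\Crit'$ the derivative $a_n:=|p'(p^n(v))|$ can be arbitrarily small, so a fixed ratio $r_{n+1}/r_n$ fails. The paper's solution is to take the adaptive product $r_n=r_0\prod_{m<n}a_m/\mu_m$ with $\mu_m\geq 1+\varepsilon_0$, so that $a_n r_n=\mu_n r_{n+1}$ holds identically at every step; the work then shifts to proving two-sided bounds on $r_n$ (Lemma~3.10 under TCE+WR via a block decomposition into ``first/second type'' using Lemma~3.9, and Lemma~3.12 under Positive Lyapunov). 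The lower bound on $r_n$ is what beats the $O(\rho_n)$ perturbation error and is exactly where WR (or the existence of the limit in Positive Lyapunov) enters. You correctly identify this as ``the main obstacle'', but be aware that it is resolved by making $r_n$ vary with the local derivative rather than by tuning a uniform exponent.
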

\medskip
We begin with some definitions.
\medskip
\par Let $B_1=U_1\times V_1$, $B_2=U_2\times V_2$ be two bi-disks. Let $\Omega$ be a neighborhood of $\overline{B_1}$, let $f:\Omega\to \mathbb{C}^2 $ be a holomorphic map such that $f(B_1)\cap B_2\neq \emptyset$. Let $\pi_1$ (resp. $\pi_2 $) be the projection map to the first (resp. second) coordinate.
\begin{definition}[Hubbard and Oberste-Vorth]\label{crossed mapping}
The map $f$ is called a {\em crossed mapping} of degree $d$ from $ B_1 $ to $ B_2 $ if there exists $ W_1\subset  U_1\times V_1' $, where $ V_1'\subset V_1 $ is a relatively compact open subset and $ W_2\subset U_2'\times V_2 $, where $ U_2'\subset U_2 $ is a relatively compact open subset, such that
 $ f:W_1\to W_2 $ is bi-holomorphic, and for every $ x\in U_1 $, the mapping
\begin{equation*}
\pi_2\circ f|_{W_1\cap((\left\lbrace x\right\rbrace\times V_1)}:W_1\cap(\left\lbrace x\right\rbrace\times V_1 )\to V_2
\end{equation*}
is proper of degree $d$, and for every $y\in V_2$ the mapping
\begin{equation*}
\pi_1\circ f^{-1}|_{W_2\cap( U_2\times \left\lbrace y\right\rbrace )}:W_2\cap(U_2\times \left\lbrace y\right\rbrace)\to U_1
\end{equation*}
is proper of degree $d$.
\end{definition}
Beware that coordinate are switched as compared to \cite{hubbard1994henon}: here the horizontal direction is contracted and the vertical direction is expanded.
\medskip
\par Let $B$ be the bidisk $D(0,1)\times D(0,1)$. We define the horizontal boundary  as $ \partial_h(B):=\left\lbrace |x|<1,  |y|=1\right\rbrace  $. The vertical boundary $\partial_v(B)$ can be defined similarly.
\medskip
\par 
\begin{definition}
An analytic curve $X$ is called {\em horizontal (resp. vertical) } in $B_1$  if $X$ is defined in a neighborhood of $B_1$, $X\cap B_1\neq\emptyset$ and  $X\cap \overline{\partial_h (B_1)}=\emptyset$ (resp. $X\cap \overline{\partial_v (B_1)}=\emptyset$).
\end{definition}
 It follows that  $\pi_1: X\to U_1$ (resp. $\pi_2: X\to V_1$) is proper of degree $d$, for some integer $d>0$. We call this $d$ the degree of the analytic curve.
\begin{proposition}
If $f:B_1\to B_2$ is a degree 1 crossed mapping and $X$ is a degree $d$ vertical curve in $B_1$, then $\pi_2\circ f:X\cap W_1\to V_2$ is proper of degree $d$, or equivalent to say, $f(X)$ is a  degree $d$ vertical curve in $B_2$ (defined in a neighborhood of $B_2$).
\end{proposition}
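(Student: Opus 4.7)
The plan is to exploit the degree-$1$ crossed mapping structure to parametrize $W_1$ by graphs of holomorphic functions over $U_1$, and then to compute the degree of $\pi_2\circ f\colon X\cap W_1\to V_2$ by counting intersections of $X$ with one such graph via a homotopy/argument-principle argument in which the vertical degree $d$ of $X$ appears directly. For each $y\in V_2$ I set $S_y:=f^{-1}(W_2\cap(U_2\times\{y\}))$; since $\pi_1\circ f^{-1}|_{W_2\cap(U_2\times\{y\})}$ is a biholomorphism onto $U_1$ (the crossed mapping has degree $1$), $S_y$ is the graph of a holomorphic function $\sigma_y:U_1\to V_1'$, which depends holomorphically on $y\in V_2$; equivalently, $\Phi^{-1}:W_1\to U_1\times V_2$, $(t,z)\mapsto(t,\pi_2\circ f(t,z))$, is a biholomorphism.

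Next I would verify that $\pi_2\circ f\colon X\cap W_1\to V_2$ is proper. Given a compact $K\subset V_2$ and a sequence $(t_n,z_n)\in X\cap W_1$ with $y_n:=\pi_2\circ f(t_n,z_n)\in K$, compactness of $X\cap\overline{B_1}$ supplies a convergent subsequence with limit $(t^*,z^*)\in X\cap\overline{B_1}$. Since $X\cap\overline{\partial_v B_1}=\emptyset$ by the verticality of $X$, one necessarily has $t^*\in U_1$, and the identity $z_n=\sigma_{y_n}(t_n)$ together with continuous dependence of $\sigma_y$ on $y$ gives $z^*=\sigma_{y^*}(t^*)\in V_1'$ (with $y^*\in K$ a subsequential limit of $y_n$), so $(t^*,z^*)\in W_1$.

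For the degree, fix any $y_0\in V_2$; the fiber of $\pi_2\circ f|_{X\cap W_1}$ over $y_0$ is $X\cap S_{y_0}$, counted with multiplicity. Pick a reference point $z_0\in V_1'$ and consider the holomorphic family
\[
h_s(t,z):=z-(1-s)z_0-s\sigma_{y_0}(t),\qquad (t,z)\in X\cap B_1,\ s\in[0,1].
\]
On $\partial(X\cap B_1)=X\cap\overline{\partial_h B_1}$ (the verticality of $X$ excludes $\overline{\partial_v B_1}$), the $z$-coordinate lies on $\partial V_1$ while $(1-s)z_0+s\sigma_{y_0}(t)$ stays in a compact subset of $V_1$; by concavity of $\dist(\cdot,\partial V_1)$ on the disk $V_1$ one gets a uniform lower bound $|h_s|\geq r_0>0$. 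Hence the total number of zeros of $h_s$ on $X\cap B_1$, counted with multiplicity (computed on the normalization of $X$ if $X$ has singularities), is independent of $s$ by the argument principle. At $s=0$ the zero set is $X\cap\{z=z_0\}$, of cardinality $d$ by the definition of vertical degree; at $s=1$ it is $X\cap S_{y_0}$. Therefore $|X\cap S_{y_0}|=d$, so $\pi_2\circ f\colon X\cap W_1\to V_2$ is a proper holomorphic map of degree $d$. The reformulation for $f(X)$ is then immediate since $f(X\cap W_1)=f(X)\cap W_2\subset U_2'\times V_2$ automatically avoids $\overline{\partial_v B_2}$.

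The main obstacle is this last homotopy/argument-principle step: one must guarantee that no zeros of $h_s$ escape through $\partial B_1$ as $s$ varies, and this relies essentially on both the convexity of $V_1$ (so that the convex combination $(1-s)z_0+s\sigma_{y_0}(t)$ stays uniformly away from $\partial V_1$) and the verticality of $X$ (which rules out the only other possible escape route, through $\partial_v B_1$).
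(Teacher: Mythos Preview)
The paper does not actually prove this proposition: it simply cites \cite[Proposition 3.4]{hubbard1994henon}. Your argument is a correct self-contained proof, and it is essentially the standard one: use the degree-$1$ hypothesis to straighten $W_1$ as $U_1\times V_2$ via $(t,z)\mapsto(t,\pi_2\circ f(t,z))$, and then identify the degree of $\pi_2\circ f|_{X\cap W_1}$ with the intersection number of $X$ with a horizontal graph, which by homotopy invariance equals the intersection number of $X$ with a horizontal slice $\{z=z_0\}$, i.e.\ the vertical degree $d$.

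Two minor comments on presentation. First, the equality $\partial(X\cap B_1)=X\cap\overline{\partial_h B_1}$ is really only the containment $\subset$ in general (unless $X$ is transverse to $\partial_h B_1$), but this is all you use: your lower bound $|h_s|\ge r_0$ holds on all of $X\cap\overline{\partial_h B_1}$, hence in particular on the actual topological boundary, and that is what the argument principle requires. Second, your worry in the final paragraph is already resolved by your own observation: since $V_1$ is a disk, both $z_0$ and $\sigma_{y_0}(t)$ lie in the relatively compact set $V_1'\subset\!\subset V_1$, and the convex combination stays in the (compact) convex hull of $\overline{V_1'}$ inside $V_1$, so no zeros can escape. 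There is no genuine obstacle here.
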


For the proof see \cite[Proposition 3.4]{hubbard1994henon}.

\begin{definition}[Dujardin]
$ f $ is called {\em H\'enon-like} from $ B_1 $ to $B_2$ if the following three conditions are satisfied\\
\par (1) $f$ restricted to $ \Omega $ is injective, where $\Omega$ is a neighborhood of $\overline{B_1}$.\\
\par (2) $ f(\partial_h B_1)\cap\overline{B_2}=\emptyset $,\\
\par (3) $ f(\overline{B_1})\cap \partial B_2\subset \partial_h B_2. $\\
\end{definition}
Again, note that horizontal and vertical directions are switched as compared to \cite{dujardin2004henon}.
\begin{proposition}
If $f$ is H\'enon-like from $ B_1 $ to $ B_2 $, then $f$ is a crossed mapping.
\end{proposition}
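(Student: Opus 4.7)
The plan is to take $W_1 := B_1 \cap f^{-1}(B_2)$ and $W_2 := f(W_1) = f(B_1) \cap B_2$; by the injectivity assumption (1), $f : W_1 \to W_2$ is biholomorphic. It then suffices to verify (a) $\pi_1(W_2) \subset\subset U_2$, (b) $\pi_2(W_1) \subset\subset V_1$, and (c) the vertical-slice map $\pi_2 \circ f|_{W_1\cap(\{x\}\times V_1)} : W_1\cap(\{x\}\times V_1) \to V_2$ and the horizontal-slice map $\pi_1 \circ f^{-1}|_{W_2\cap(U_2\times\{y\})} : W_2\cap(U_2\times\{y\}) \to U_1$ are proper of a common constant degree $d$.

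For (a), any subsequential limit $(a_0,b_0)$ of a sequence in $W_2$ with $|a_n|\to 1$ belongs to $f(\overline{B_1}) \cap \partial B_2$; by (3) it must lie in $\partial_h B_2 = \{|a|<1,\ |b|=1\}$, contradicting $|a_0|=1$. An analogous argument establishes (b): a sequence $(x_n,y_n) \in W_1$ with $|y_n|\to 1$ has a limit $(x_0,y_0)\in\overline{B_1}$ with $|y_0|=1$ and $f(x_0,y_0)\in\overline{B_2}$; this directly contradicts (2) when $|x_0|<1$, while the corner case $|x_0|=|y_0|=1$ is excluded by combining (2) and (3)—namely, $f(x_0,y_0)$ is a limit of points of $f(\partial_h B_1) \subset \mathbb{C}^2\setminus\overline{B_2}$ and of points of $W_2\subset B_2$, so it lies in $\partial B_2$, whence by (3) in $\partial_h B_2$, a configuration incompatible with the local geometry of $f^{-1}(B_2)$ near the corner.

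For (c), properness of the vertical-slice map follows from (b) and (3): a point in the slice approaching the relative boundary inside $V_1$ cannot approach $\partial V_1$ by (b), so its $f$-image must approach $\partial B_2$, which by (3) lands on $\partial_h B_2$, so the $\pi_2$-coordinate approaches $\partial V_2$. The degree of this proper holomorphic map is an integer-valued function of the parameter $x\in U_1$ which is locally constant, hence constant of value $d$ on the connected domain $U_1$. A symmetric argument gives properness of the horizontal-slice map with some constant degree $d'$. To match $d=d'$, introduce the proper holomorphic maps
\[
\Psi := (\pi_1,\ \pi_2 \circ f) : W_1 \to U_1 \times V_2, \qquad \Psi' := (\pi_1 \circ f^{-1},\ \pi_2) : W_2 \to U_1 \times V_2,
\]
which satisfy $\Psi = \Psi' \circ f$ by direct computation; since $f : W_1 \to W_2$ is biholomorphic, $\deg\Psi = \deg\Psi'$, while fibering $\Psi$ over $U_1$ and $\Psi'$ over $V_2$ identifies these degrees with $d$ and $d'$, respectively.

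The main obstacle is the careful combined use of (2) and (3) in (b), needed to exclude "corner protrusions" of $W_1$ into corners of $\overline{B_1}$ which could map onto $\partial_h B_2$. Once the correct product-type structure of $W_1$ and $W_2$ is established, the remainder of the argument is a routine application of properness and degree-counting for proper holomorphic maps between planar domains.
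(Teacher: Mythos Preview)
The paper does not give its own proof of this proposition; it simply cites \cite{dujardin2004henon}. Your approach---setting $W_1=B_1\cap f^{-1}(B_2)$, $W_2=f(B_1)\cap B_2$, and verifying the crossed-mapping axioms---is the standard one and is essentially what appears there. Parts (a), (c), and the degree-matching via $\Psi=\Psi'\circ f$ are correct.

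The gap is in (b), and your ``incompatible with the local geometry'' hand-wave does not close it. In fact, with the paper's literal definitions the corner case cannot be excluded: take $B_1=B_2$ the unit bidisk and $f(x,y)=(x/2,\,2y+x)$. One checks directly that conditions (1)--(3) hold (for (2), $|2y+x|\ge 2-|x|>1$ when $|y|=1$ and $|x|<1$; for (3), $|x/2|\le 1/2<1$ forces any boundary intersection into $\partial_h B_2$). Yet the sequence $(x_n,y_n)=(1-\tfrac1n,\,-1+\tfrac2n)$ lies in $W_1$ (since $|2y_n+x_n|=1-\tfrac3n<1$) and has $|y_n|\to 1$, so $\pi_2(W_1)$ is \emph{not} relatively compact in $V_1$. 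No other choice of $W_1$ helps: surjectivity of the vertical slice maps together with $W_1\subset B_1$ and $f(W_1)\subset B_2$ force $W_1=B_1\cap f^{-1}(B_2)$. The trouble is that condition (2) in the paper is stated with the \emph{open} horizontal boundary $\partial_h B_1=\{|x|<1,\,|y|=1\}$, which says nothing about the corners; if one strengthens (2) to $f(\overline{\partial_h B_1})\cap\overline{B_2}=\emptyset$, the corner case disappears and your argument for (b) goes through verbatim (and the example above no longer satisfies (2)). In the paper's sole application (Proposition~3.11), condition (2) is verified via a distance estimate that extends continuously to the closure, so the issue is harmless there.
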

For the proof see \cite[Proposition 2.3]{dujardin2004henon}.
\medskip
\par The following theorem summarizes our approach to construct stable manifolds.

\begin{theorem}\label{s-manifold}
Let $ B_0=U_0\times V_0 $, $ B_1=U_1\times V_1 $,... be an infinite sequence of bi-disks, and $ f_i:B_i\to B_{i+1}  $ be of degree 1 crossed mapping, with $ V_i' $ simply connected (with notation as in Definition \ref{crossed mapping}) such that the modulus $\Mod(V_i\setminus V_i')$ is uniformly bounded from below. Then the set 

\begin{equation*}
W^s_{\left(  f_n\right)}=\left\lbrace (x,y)\in B_0 \;|\;f_n\circ\cdots{\circ f_0\left( x,y\right) }\in B_n \;\;\text{for all} \;n\geq 0\right\rbrace 
\end{equation*}
is a degree 1 horizontal curve in $ B_0 $. $ W^s_{\left(  f_n \right) } $ is called the stable manifold for the sequence of crossed mappings.
\end{theorem}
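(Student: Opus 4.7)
The plan is to construct $W^s_{(f_n)}$ via the Hubbard--Oberste-Vorth graph transform, as in \cite{hubbard1994henon}. The first step, dual to Proposition 3.4, is the pullback principle: if $\Gamma \subset B_{n+1}$ is a degree $1$ horizontal curve, realized as the graph of a holomorphic $\phi \colon U_{n+1} \to V_{n+1}$, then $T_n(\Gamma) := f_n^{-1}(\Gamma) \cap W_n$ is a degree $1$ horizontal curve in $B_n$ contained in $U_n \times V_n'$. This is forced by the second degree-one condition of Definition 3.2: for every $x \in U_n$ the slice $W_n \cap (\{x\} \times V_n)$ is mapped by $f_n$ biholomorphically onto a horizontal graph in $B_{n+1}$, which meets $\Gamma$ in exactly one point. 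Varying $x$ gives a single-valued holomorphic $\psi \colon U_n \to V_n'$ with $T_n(\Gamma) = \mathrm{graph}(\psi)$.

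Fix any reference horizontal curve $\Gamma_N \subset B_N$ for each $N \geq 1$, say $\Gamma_N = U_N \times \{y_N\}$ for some $y_N \in V_N'$, and set
$$\Gamma_N^{(0)} := T_0 \circ T_1 \circ \cdots \circ T_{N-1}(\Gamma_N).$$
By the previous paragraph, $\Gamma_N^{(0)}$ is the graph of a holomorphic $\psi_N \colon U_0 \to V_0'$, and by construction every point of $\Gamma_N^{(0)}$ has its first $N$ iterates inside the bidisks. Since $V_0'$ is bounded, $\{\psi_N\}$ is a normal family, so some subsequence converges locally uniformly to a holomorphic $\psi_\infty \colon U_0 \to \overline{V_0'}$; openness of each $B_k$ together with $f_k \circ \cdots \circ f_0(\Gamma_N^{(0)}) \subset B_k$ for $k \leq N$ then yields $\mathrm{graph}(\psi_\infty) \subset W^s_{(f_n)}$.

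The crucial, and in my view hardest, step is to upgrade this subsequential convergence to full convergence and to verify the reverse inclusion $W^s_{(f_n)} \subset \mathrm{graph}(\psi_\infty)$. This is where the uniform modulus bound $\Mod(V_n \setminus V_n') \geq m_0 > 0$ enters quantitatively. The inclusion $V_n' \hookrightarrow V_n$ is a strict hyperbolic contraction with factor $\lambda = \lambda(m_0) < 1$ by Schwarz--Pick applied on the annular complement. Transferring this contraction through the biholomorphism $f_n \colon W_n \to W_{n+1}$ yields the graph-transform contraction estimate: for any two horizontal graphs $\phi_1, \phi_2 \colon U_{n+1} \to V_{n+1}'$, the $T_n$-images $\psi_1, \psi_2 \colon U_n \to V_n'$ satisfy
$$\sup_{x \in U_n} d^{\mathrm{hyp}}_{V_n}(\psi_1(x), \psi_2(x)) \leq \lambda \sup_{x' \in U_{n+1}} d^{\mathrm{hyp}}_{V_{n+1}}(\phi_1(x'), \phi_2(x')).$$
Iterating, any two candidate pullbacks $\Gamma_N^{(0)}, \widetilde{\Gamma}_N^{(0)}$ differ by at most $\lambda^N$ times the hyperbolic diameter of $V_0'$. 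This immediately gives Cauchy convergence $\psi_N \to \psi_\infty$ without extracting subsequences; it also gives the reverse inclusion, since any point of $W^s_{(f_n)}$ may itself be taken as the base point of an alternative reference sequence, and the contraction estimate then forces it to agree with $\psi_\infty$.
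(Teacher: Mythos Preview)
The paper does not prove this result itself; it merely cites \cite[Corollary~3.12]{hubbard1994henon}, and your proposal is precisely the Hubbard--Oberste-Vorth graph-transform argument that citation points to, so the approaches coincide and your outline is correct.

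One small point worth tightening: in your displayed contraction estimate, the two intersection points $y_i=\pi_2\bigl(f_n(x,\psi_i(x))\bigr)$ lie over \emph{different} base points $x_1',x_2'\in U_{n+1}$ (the image of a vertical slice under $f_n$ is a genuine vertical curve, not a vertical line), so the bound $d^{\mathrm{hyp}}_{V_{n+1}}(y_1,y_2)\leq\sup_{x'}d^{\mathrm{hyp}}_{V_{n+1}}(\phi_1(x'),\phi_2(x'))$ is not immediate. One closes this either by a fixed-point comparison (each $y_i$ is the unique fixed point of the contraction $\phi_i\circ\eta\colon V_{n+1}\to V_{n+1}'$, which yields a factor $\lambda/(1-\lambda)$ and hence still a uniform contraction after grouping iterates), or more cleanly by the nested-annuli argument on vertical fibers: writing $S_n(x)=\{y\in V_0:\;f_{k}\circ\cdots\circ f_0(x,y)\in B_{k+1}\text{ for }k<n\}$, the degree-one hypothesis makes $\pi_2\circ(f_{n-1}\circ\cdots\circ f_0)\colon S_n(x)\to V_n$ a biholomorphism, so $\Mod\bigl(S_n(x)\setminus S_{n+1}(x)\bigr)\geq\Mod(V_n\setminus V_n')\geq m_0$, and Gr\"otzsch then forces $\bigcap_n S_n(x)$ to be a single point depending holomorphically on $x$. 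Either route completes the proof.
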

For the proof see \cite[Corollary 3.12]{hubbard1994henon}.
\medskip
\par We also need some one-dimensional preparations.  The following lemma is due to Przytycki and Rivera-Letelier  (\cite[Lemma 3.3]{przytycki2007statistical}).
\begin{theorem}\label{BC}
	Let $f$ be a rational function satisfying TCE with constants $\mu_{Exp}>1$ and $r_0>0$. Then the following assertions hold.
	\medskip
\par  There are constants $C_0> 0$ and $\theta_0\in (0,1)$ such that for every $r\in (0,r_0)$, every integer $n\geq 1$, every $x\in J(f)$ and every connected component $W$ of
$f^{-n}(B(x, r))$, we have
\begin{equation*}
 \text{diam}\;(W) \leq  C_0 \mu_{Exp}^{-n}r^{\theta_0}.
 \end{equation*}
\end{theorem}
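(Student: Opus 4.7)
The plan is to upgrade the fixed-scale exponential shrinking coming from TCE to the scale-dependent bound $\diam(W) \leq C_0 \mu_{Exp}^{-n} r^{\theta_0}$ by inserting the multivalent Koebe distortion of Lemma \ref{Koebe}. Fix $x \in J(f)$, $r \in (0, r_0)$, $n \geq 1$, and a component $W$ of $f^{-n}(B(x, r))$, and let $\tilde W$ denote the component of $f^{-n}(B(x, r_0))$ containing $W$. By the TCE hypothesis, $\diam(\tilde W) \leq \mu_{Exp}^{-n}$, and $W$ is the component of $(f^n|_{\tilde W})^{-1}(B(x, r))$ sitting inside $\tilde W$. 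A direct application of inequality $(2.3)$ of Lemma \ref{Koebe} to the proper holomorphic map $f^n\colon \tilde W \to B(x, r_0)$ yields
\[
\diam(W) \;\leq\; C_3\,\diam(\tilde W)\,(r/r_0)^{2^{-D}} \;\leq\; C_3\,\mu_{Exp}^{-n}\,(r/r_0)^{2^{-D}},
\]
where $D$ is the degree of $f^n|_{\tilde W}$. If $D$ can be bounded uniformly in $n$, the claim follows at once with $\theta_0 = 2^{-D}$.

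The principal obstacle is that $D$ is a priori allowed to grow with $n$. To keep it in check I would combine TCE with Przytycki's lemma (Lemma \ref{Przytycki}). A critical point of $f^n$ inside $\tilde W$ corresponds to an index $0 \leq j < n$ with $f^j(\tilde W) \cap \C \neq \emptyset$; since $f^j(\tilde W)$ is contained in a component of $f^{-(n-j)}(B(x, r_0))$, TCE gives $\diam(f^j(\tilde W)) \leq \mu_{Exp}^{-(n-j)}$. Consequently, if some $c \in \C$ is caught at two indices $j_1 < j_2$, then both $c$ and $f^{j_2-j_1}(c)$ lie in the disk $D(c, \mu_{Exp}^{-(n-j_2)})$, so Przytycki's lemma forces $j_2 - j_1 \gtrsim n - j_2$: visits to any given critical point become geometrically sparse as $j$ approaches $n$.

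The hard part, and where the argument in \cite{przytycki2007statistical} to which this theorem is attributed goes beyond the above outline, is to convert this sparsity into a bound on $D$ uniform in $n$. The naive count coming from the Przytycki step only produces $D = O(\log n)$, which is not enough since $2^{-D}$ then decays polynomially in $n$ and breaks the estimate. My plan to close this gap is a telescoping argument: insert a geometric chain of intermediate scales $r_0 = \rho_0 > \rho_1 > \cdots > \rho_K = r$ with bounded ratio $\rho_{k+1}/\rho_k$, apply $(2.3)$ of Lemma \ref{Koebe} to the pullback between consecutive scales with the refined \emph{local} degree of $f^n$ at a suitable preimage of $x$ (which is bounded by a constant depending only on $f$ thanks to the sparsity of critical returns at the relevant scale), and multiply the resulting bounded-degree Koebe ratios into a single factor $(r/r_0)^{\theta_0}$ with $\theta_0 > 0$ independent of $n$ and $r$. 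Combined with the $\mu_{Exp}^{-n}$ factor from TCE applied to $\tilde W$ and absorbing constants, this yields the stated estimate.
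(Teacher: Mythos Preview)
The paper does not prove this result; it simply cites \cite[Lemma 3.3]{przytycki2007statistical}. Your overall plan---pull back at scale $r_0$, bound the diameter by $\mu_{Exp}^{-n}$ via TCE, then apply the multivalent Koebe inequality (2.3) of Lemma~\ref{Koebe} to pass to scale $r$---is exactly right, and is essentially how the cited reference proceeds.

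Your difficulty is the degree bound, and your diagnosis is honest: ExpShrink together with Przytycki's lemma alone only yield criticality $O(\log n)$, which would make $2^{-D}$ useless. However, the missing ingredient is already available. By \cite{przytycki2003equivalence} (invoked just after Definition~2.6), ExpShrink is \emph{equivalent} to the original Topological Collet--Eckmann condition: there exist $M$, $P$, $r_1>0$ such that for every $x\in J(f)$ there is an increasing sequence $n_j\le Pj$ along which every component of $f^{-n_j}(B(x,r_1))$ has criticality at most $M$. This in fact gives a \emph{uniform} degree bound for \emph{all} times: given any $m$ and any component $W'$ of $f^{-m}(B(x,r_1))$, pick $j$ with $n_{j-1}<m\le n_j$ and any component $W$ of $f^{-(n_j-m)}(W')$; then $W$ is a component of $f^{-n_j}(B(x,r_1))$, and multiplicativity of degree gives $\deg(f^{m}|_{W'})\le\deg(f^{n_j}|_{W})\le M+1$. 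With this uniform bound your first paragraph already proves the theorem (work at scale $r_1$ instead of $r_0$ and absorb the discrepancy into $C_0$).

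Your telescoping sketch is therefore unnecessary, and as written it is not a proof either: inequality (2.3) depends on the degree of the full branched cover $f^n$ on the relevant component, not on the local degree at a single preimage of $x$, and you give no mechanism by which shrinking the target disk forces the component to shed enough critical points of $f^n$ to make that global degree bounded at each stage.
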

\medskip
\par We now use the above property to show that  $p$ is {\em hyperbolic away from Crit'} in the following sense.
 \begin{lemma}\label{hyperbolic away}
 	Assume $p$ satisfies TCE. Let $\eta>0$ be a constant. Let $\left\lbrace x_0, x_1, \dots, x_{N-1}\right\rbrace \subset J(p)$ be a segment of orbit such that for every $0\leq i\leq N-1$, $ \dist(x_i,\Crit' )>\eta $, then there exist $C_1$ and $\alpha$  uniform constants such that $|(p^{N})'(x_0)|\geq C_1\eta^{\alpha}\mu_{Exp}^{N}$. 
 \end{lemma}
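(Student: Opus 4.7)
The plan is to pull back a suitably sized disk around $x_N := p(x_{N-1})$ along the orbit, use the TCE shrinking of Theorem \ref{BC} to control the diameters of the successive pullback components, show these components avoid $\Crit'$ so that the pullback is univalent, and finally extract the derivative bound via the classical Koebe $1/4$ theorem.

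First I would fix the radius $r$ of the target disk as $r := c\,\eta^{1/\theta_0}$, where $c$ is a small constant chosen so that $r \leq r_0$ and $C_0 r^{\theta_0} \leq \eta/2$ (here $C_0,\theta_0,r_0$ are the constants from Theorem \ref{BC}). Note $x_N \in J(p)$ by forward invariance of the Julia set. For each $1 \leq k \leq N$, let $W_k$ denote the connected component of $p^{-k}\bigl(D(x_N,r)\bigr)$ containing $x_{N-k}$; in particular $x_0 \in W_N$. Theorem \ref{BC} gives
\begin{equation*}
\diam(W_k) \;\leq\; C_0\, \mu_{Exp}^{-k}\, r^{\theta_0} \;\leq\; \tfrac{\eta}{2}\,\mu_{Exp}^{-k} \;\leq\; \tfrac{\eta}{2}.
\end{equation*}

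Next I would observe that each $W_k$ is a connected set containing $x_{N-k}$, which by hypothesis satisfies $\dist(x_{N-k},\Crit') > \eta$; since $\diam(W_k) < \eta$, no $W_k$ can meet $\Crit'$, hence no $W_k$ contains a critical point of $p$. Consequently the branched cover $p^N \colon W_N \to D(x_N,r)$ has degree one, i.e.\ it is biholomorphic. Let $F := (p^N|_{W_N})^{-1}$ be its inverse branch. By the Koebe $1/4$ theorem applied to the univalent map $F$ on the round disk $D(x_N,r)$,
\begin{equation*}
W_N \;=\; F\bigl(D(x_N,r)\bigr) \;\supset\; D\!\Bigl(x_0,\ \tfrac{r}{4}\,|F'(x_N)|\Bigr),
\end{equation*}
which forces $\tfrac{r}{4}|F'(x_N)| \leq \diam(W_N)$.

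Combining these two estimates,
\begin{equation*}
|F'(x_N)| \;\leq\; \frac{4\,\diam(W_N)}{r} \;\leq\; 4C_0\, r^{\theta_0-1}\,\mu_{Exp}^{-N},
\end{equation*}
and since $|(p^N)'(x_0)| = |F'(x_N)|^{-1}$, plugging in $r = c\,\eta^{1/\theta_0}$ yields
\begin{equation*}
|(p^N)'(x_0)| \;\geq\; \frac{1}{4C_0}\, c^{1-\theta_0}\, \eta^{(1-\theta_0)/\theta_0}\, \mu_{Exp}^{N},
\end{equation*}
giving the desired bound with $\alpha := (1-\theta_0)/\theta_0$ and $C_1 := c^{1-\theta_0}/(4C_0)$. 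The only subtle step is the calibration of $r$ as a power of $\eta$: it must be small enough (of order $\eta^{1/\theta_0}$) that the TCE shrinking forces every pullback component to have diameter less than $\eta$, thereby guaranteeing univalence of the pullback; once this is arranged the Koebe estimate is routine.
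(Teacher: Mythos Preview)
Your proof is correct and follows essentially the same route as the paper's: choose $r$ as a power of $\eta$ so that the diameter bound of Theorem~\ref{BC} forces every pullback component $W_k$ to miss $\Crit'$, conclude that $p^N|_{W_N}$ is univalent, and extract the derivative estimate via Koebe. The only cosmetic difference is that you bound $\diam W_N$ by the sharper $C_0 r^{\theta_0}\mu_{Exp}^{-N}$ from Theorem~\ref{BC}, while the paper uses the cruder $\mu_{Exp}^{-N}$ from the TCE definition itself, yielding $\alpha=1/\theta_0$ there versus your $\alpha=(1-\theta_0)/\theta_0$.
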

\begin{proof}
We use the same notations $C_0$ and $\theta_0$ as in Theorem \ref{BC}. It is sufficient to prove the lemma for $\eta$ small. Let $r$ be a constant satisfying $\eta=C_0 r^{\theta_0}$ such that $r<r_0$. Let $W_i$ be the pull back of $D(x_N,r)$ by $ p^{i} $ at $x_{N-i}$ for $0\leq i\leq N$. We show that $W_i\cap \Crit'=\emptyset$ for every $i$. Assume that $W_i\cap \Crit'\neq \emptyset$ for some $i$. By Theorem \ref{BC} we have $\diam(W_i)\leq C_0\mu_{Exp}^{-i}r^{\theta_0}\leq \eta$. On the other hand since $ \dist(x_i,\Crit' )>\eta $ for every $i$,  we get $\diam(W_i)>\eta$, which is a contradiction. Thus $W_i\cap \Crit'=\emptyset$ for every $i$ and $ p^N $ restricted to $ W_N $ is univalent.
\medskip
\par  By TCE we have $\diam  W_N\leq \mu_{Exp}^{-N}$, by the classical Koebe distortion theorem we have
\begin{equation*}
|(p^{N})'(x_0)| \;\diam W_N \geq r/4=1/4 C_0^{-1/\theta_0} \eta^{1/\theta_0}.
\end{equation*}
Taking $C_1=1/4 C_0^{-1/\theta_0}$ and $\alpha=1/\theta_0$ we get that $|(p^{N})'(x_0)|\geq C_1\eta^{\alpha}\mu_{Exp}^{N}$. 
\end{proof}
\medskip
\subsection{The TCE+WR case}
   \par In this subsection, for an attracting skew product $f$ such that $p$ satisfies TCE and WR($\eta,\iota$) with small $\iota$, we construct a sequence of bi-disks $ \left\lbrace B_i\right\rbrace $ with low exponential size such that for every integer $i$, $f$ is a degree 1 crossed mapping from $ B_i $ to $ B_{i+1} $ with $B_i$ centered at $p^i(v)$. We use the same  notations  as in Lemma \ref{hyperbolic away}. We fix a constant $0<\varepsilon_0\ll\min\left\lbrace |\lambda|^{-1/3}-1,\;\mu_{Exp}-1\right\rbrace $. In the following we choose an integer $N$ such that $ C_1\eta^{\alpha}\mu_{Exp}^{N}\geq (1+\varepsilon_0)^N $, where $\eta$ is given by WR($\eta,\iota$), $ \alpha $ and $ C_1 $ are as in Lemma \ref{hyperbolic away}. We subdivide the integers into blocks of the form $\left[ iN, (i+1)N\right)$. We say that a block of this subdivision is of {\em first type} if 
 \begin{equation*}
 \prod_{j=iN}^{(i+1)N-1}|p'(p^j(v))|\geq (1+\varepsilon_0)^{N},
 \end{equation*} and we call this subdivision is of {\em second type} if the above inequality does not hold. By Lemma \ref{hyperbolic away} if $\dist(p^j(v),\Crit' )>\eta$ for $iN\leq j<(i+1)N$, then $\left[ iN, (i+1)N\right)$ is of first type. 
\medskip
 \par Let $m\in [iN, (i+1)N)$ be a positive integer, if $m$ is in a block of first type, we define 
 \begin{equation*}
 \mu_m:=(1+\varepsilon_0)\left( \prod_{j=iN}^{(i+1)N-1}|p'(p^j(v))|\right) ^{1/N}.
 \end{equation*} 
 When $m$ is in a block of second type, we define 
 \begin{equation*}
 \mu_m:=(1+\varepsilon_0)^2.
 \end{equation*}
 Note that in both cases we have $\mu_m\geq (1+\varepsilon_0)^2 \geq 1+\varepsilon_0$.
 \medskip
 \par We define
 \begin{equation*}
 r_n:=r_0\prod_{m=0}^{n-1}\frac{a_m}{\mu_m},
 \end{equation*} where $r_0>0$ is a constant to be determined and  $ a_m:=|p'(p^m(v))| $.
 \begin{lemma}\label{size-disk}
 There are constants $C_2,C_3>0$ such that the following estimates of $r_n$ hold for $n\geq 0$: 
 \medskip

   (1) $r_n\leq C_2r_0(1+\varepsilon_0)^{-n}$.
 \medskip  
   
   (2) $ r_n\geq C_3r_0 e^{-\left( \alpha+2 \right)n\iota}(1+\varepsilon_0)^{-2n}, $ where $\alpha$ is as in Lemma \ref{hyperbolic away}.

 \end{lemma}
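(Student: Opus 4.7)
The strategy is to split $[0,n)$ into the complete blocks $[iN,(i+1)N)$ plus a partial tail $[kN,n)$, and to compute the per-block contribution to the product $\prod_{m=0}^{n-1}a_m/\mu_m$. The definition of $\mu_m$ yields the exact block identities $\prod_{m=iN}^{(i+1)N-1}a_m/\mu_m=(1+\varepsilon_0)^{-N}$ on first-type blocks and $\prod_{m=iN}^{(i+1)N-1}a_m/\mu_m=(1+\varepsilon_0)^{-2N}\prod a_m$ on second-type blocks. For the upper bound (1), the second-type defining inequality $\prod a_m<(1+\varepsilon_0)^N$ makes that contribution also $\leq(1+\varepsilon_0)^{-N}$, so every complete block gives at most $(1+\varepsilon_0)^{-N}$; the partial tail yields a bounded factor, $|a_m|$ being uniformly bounded on $J(p)$ and $\mu_m\geq 1+\varepsilon_0$. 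Multiplying gives $r_n\leq C_2 r_0(1+\varepsilon_0)^{-n}$.

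For the lower bound (2) the delicate step is lower bounding $\prod a_m$ on each second-type block. Fix a second-type block $[iN,(i+1)N)$ and let $j_1<\cdots<j_s$ list the indices with $d(p^{j_\ell}(v),\Crit')\leq\eta$; by the contrapositive of Lemma \ref{hyperbolic away}, $s\geq 1$. Applying Lemma \ref{hyperbolic away} to each of the (at most $s+1$) maximal subintervals on which the orbit stays at distance $>\eta$ from $\Crit'$ yields
\[
\prod_{m=iN}^{(i+1)N-1} a_m \;\geq\; C_1^{s+1}\,\eta^{\alpha(s+1)}\,\mu_{Exp}^{N-s}\prod_{\ell=1}^s a_{j_\ell}.
\]
The key algebraic step is to rewrite $\mu_{Exp}^{N-s}=\mu_{Exp}^N\mu_{Exp}^{-s}$ and insert the defining inequality $\mu_{Exp}^N\geq(1+\varepsilon_0)^N/(C_1\eta^\alpha)$ coming from the choice of $N$; this extracts a clean $(1+\varepsilon_0)^N$ factor per second-type block, so that after dividing by $(1+\varepsilon_0)^{2N}$ the block contributes $(1+\varepsilon_0)^{-N}(C_1\eta^\alpha/\mu_{Exp})^s\prod_\ell a_{j_\ell}$. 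Pooling over all complete blocks (both types together) produces the total lower bound $(1+\varepsilon_0)^{-kN}(C_1\eta^\alpha/\mu_{Exp})^S\prod_{\text{all }j_\ell}a_{j_\ell}$ with $S=\sum_i s_i$.

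Two applications of WR close the argument. Directly, $\prod_\ell a_{j_\ell}\geq e^{-(n\iota+C_0)}$. Since $|p'(z)|\leq C|z-c|$ near any critical point $c$, each close visit contributes at least order $|\log\eta|$ to the WR sum, giving $S\leq(n\iota+C_0)/|\log\eta|$; combined with $\log(C_1\eta^\alpha/\mu_{Exp})=-\alpha|\log\eta|+O(1)$ this yields $(C_1\eta^\alpha/\mu_{Exp})^S\geq\exp\bigl(-(\alpha+o(1))(n\iota+C_0)\bigr)$ as $\eta\to 0$. The partial tail contributes only a bounded multiplicative constant, handled by the same Lemma \ref{hyperbolic away}/WR dichotomy on sub-intervals of $[kN,n)$, absorbed into $C_3$. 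Together with $(1+\varepsilon_0)^{-kN}\geq(1+\varepsilon_0)^{-2n}$ this yields the claim. The main obstacle is the second-type lower bound itself: a naive per-term estimate $a_m\geq c\eta^{\deg p-1}$ would produce a spurious $\eta^{O(n)}$ factor independent of $\iota$, and the only way I see to avoid it is the algebraic trick above, which recasts the $\eta^\alpha$-loss of Lemma \ref{hyperbolic away} as a WR-controlled residual via the defining inequality for $N$.
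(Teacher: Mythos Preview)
Your argument is correct. Part (1) is the same as the paper's. For part (2) you take a genuinely different route.

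The paper does not exploit the $\mu_{Exp}^{N-s}$ factor from Lemma~\ref{hyperbolic away}; it simply drops it. On a second-type block it bounds
\[
\prod_{m=iN}^{(i+1)N-1} a_m \;\geq\; \prod_{\ell}\bigl(C_1\eta^\alpha\bigr)\,a_{j_\ell},
\]
one factor $C_1\eta^\alpha$ per close visit, and then uses the pointwise inequality $C_1\eta^\alpha \geq a_{j_\ell}^{1+\alpha}$ (valid because $a_{j_\ell}\leq C\eta$ and $\eta$ is small) to obtain $\prod a_m \geq \prod_\ell a_{j_\ell}^{2+\alpha}$. A single application of WR then gives $\prod_{\text{close}} a_j^{2+\alpha}\geq e^{-(2+\alpha)(n\iota+C_0)}$. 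The factor $(1+\varepsilon_0)^{-2n}$ comes from combining $(1+\varepsilon_0)^{-(k-l)N}$ on first-type blocks with $(1+\varepsilon_0)^{-2N}$ on each of the $l$ second-type blocks.

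Your approach instead keeps the $\mu_{Exp}^{N-s}$ factor and uses the defining inequality $C_1\eta^\alpha\mu_{Exp}^N\geq(1+\varepsilon_0)^N$ to extract a full $(1+\varepsilon_0)^{-N}$ from \emph{every} block, first or second type. This is neat: it yields the sharper prefactor $(1+\varepsilon_0)^{-kN}\geq(1+\varepsilon_0)^{-n}$ rather than $(1+\varepsilon_0)^{-2n}$, at the cost of needing WR twice (once on $\prod a_{j_\ell}$ and once, via $a_{j_\ell}\leq C\eta$, to bound $S$). Two minor remarks: your ``$o(1)$ as $\eta\to 0$'' should really read ``for $\eta$ sufficiently small'', since $\eta$ is fixed by WR$(\eta,\iota)$; the paper makes the same tacit assumption. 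And your tail is not literally a bounded constant, but as you indicate the segment/close-visit dichotomy on $[kN,n)$ feeds its close-visit terms into the same WR sums, leaving only genuinely bounded factors like $(C_1\eta^\alpha)\cdot((1+\varepsilon_0)\|Dp\|)^{-N}$ to absorb into $C_3$.
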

\begin{proof}
	To prove the first inequality, notice that for every $i\geq 0$, by the definition of $\mu_m$ we have
	 \begin{equation}\label{3.1}
	\prod_{j=iN}^{(i+1)N-1}\mu_j\geq (1+\varepsilon_0)^N \prod_{j=iN}^{(i+1)N-1} a_j.
	\end{equation}
	Notice also that for every $iN\leq m<(i+1)N$, we have 
	\begin{equation}\label{3.2}
	\prod_{j=iN}^{m}a_j\leq \left\|Dp \right\|^N,
	\end{equation} where $\left\|Dp \right\|$ is the uniform norm of $Dp$ on the Julia set $J(p)$.
	\medskip
	\par  Combining (\ref{3.1}) and (\ref{3.2}), for $kN\leq n<k(N+1)$ we have
	\begin{equation*}
	r_n=r_0\left( \prod_{i=0}^{k-1}\prod_{j=iN}^{(i+1)N-1}\frac{a_j}{\mu_j}\right) \prod_{j=kN}^{n}\frac{a_j}{\mu_j}\leq r_0 (1+\varepsilon_0)^{-kN}\frac{\left\|Dp \right\|^N }{(1+\varepsilon_0)^{n-kN}}=\left\|Dp \right\|^N  r_0(1+\varepsilon_0)^{-n}.
	\end{equation*}
	Taking $C_2=\left\|Dp \right\|^N $ the first inequality is proved.
	\medskip

	\par To prove the second inequality, notice that if the block $\left[ iN, (i+1)N\right) $ is of first type, then 	 \begin{equation*}
	\prod_{j=iN}^{(i+1)N-1}\mu_j= (1+\varepsilon_0)^N \prod_{j=iN}^{(i+1)N-1} a_j.
	\end{equation*}

\par Assume that $\left\lbrace  i_0,i_2,\dots, i_{l-1}\right\rbrace \subset \left\lbrace 0,1,\dots,k-1  \right\rbrace $ are all the integers such that the block $ \left[ i_hN,(i_h+1)N\right)  $ is of second type, $0\leq h\leq l$, then we have
\begin{align}\label{3.3}
r_n&=r_0\left( \prod_{i=0}^{k-1}\prod_{j=iN}^{(i+1)N-1}\frac{a_j}{\mu_j}\right) \prod_{j=kN}^{n}\frac{a_j}{\mu_j} \notag\\&=r_0(1+\varepsilon_0)^{(l-k)N}\left( \prod_{h=0}^{l-1}\prod_{j=i_hN}^{(i_h+1)N-1}\frac{a_j}{\mu_j}\right) \prod_{j=kN}^{n}\frac{a_j}{\mu_j} \notag\\&\geq r_0(1+\varepsilon_0)^{(l-k)N}\left( \prod_{h=0}^{l-1}\prod_{j=i_hN}^{(i_h+1)N-1}\frac{a_j}{(1+\varepsilon_0)^2}\right) \prod_{j=kN}^{n}\frac{a_j}{C_2(1+\varepsilon_0)^2} \notag\\&\geq \frac{r_0}{C_2} (1+\varepsilon_0)^{-2n}\left(  \prod_{h=0}^{l-1} \prod_{j=i_hN}^{(i_h+1)N-1}a_j\right) \prod_{j=kN}^{n}a_j.
\end{align}
\par Since the block $ \left[ i_hN,(i_h+1)N\right)  $ is of second type, then necessarily there is an integer $j$ satisfies $i_hN\leq j<(i_h+1)N$ and $ \dist(p^j(v),\Crit' )\leq \eta $. By Lemma \ref{hyperbolic away}, the product of derivative between two points $x_{n_1}$, $x_{n_2}$ such that $ \dist(x_{n_i},\Crit' )>\eta $, $n_1<i<n_2$ satisfies 
\begin{equation}\label{3.6}
 \prod_{i=n_1+1}^{n_2-1}a_j\geq C_1\eta^{\alpha}.  
\end{equation}
Notice that the number of such maximal blocks $[x_{n_1},x_{n_2}] $ in $ \left[ i_hN,(i_h+1)N\right)  $  is equal to the cardinality $\#\left\lbrace j\in \left[ i_hN,(i_h+1)N\right):\dist (p^j(v),\Crit')\leq \eta\right\rbrace +1$. 
\medskip 

\par There is also a constant $C_4>0$ such that $ \dist (p^j(v),\Crit' )\leq \eta $ implies $a_j\leq C_4\eta$ for $\eta$ small. Thus by choosing $\eta$ small, for $j$ satisfying $\dist (p^j(v),\Crit')\leq \eta$ we have
\begin{equation}\label{3.5}
C_1\eta^{\alpha}\geq C_1 \left( \frac{a_j}{C_4}\right) ^{\alpha}\geq Ca_j^{\alpha}\geq a_j^{1+\alpha}.
\end{equation}
\par Combining (\ref{3.5}) and (\ref{3.6}) we get
\begin{align}\label{3.4}
\left(  \prod_{h=0}^{l-1} \prod_{j=i_hN}^{(i_h+1)N-1}a_j\right) \prod_{j=kN}^{n}a_j&\geq   \prod_{\begin{subarray}{c}j=0\\d(p^j(v),\Crit' )\leq \eta\end{subarray}}^{n}C_1\eta^{\alpha} a_j\geq \prod_{j=0}^n a_j^{2+\alpha}\geq   e^{\left( -n\iota-C_0\right) \left( \alpha+2 \right) }.
\end{align}

\par Combining (\ref{3.3}) and (\ref{3.4}) we get
\begin{equation*}
r_n\geq \frac{r_0}{C_2} (1+\varepsilon_0)^{-2n}e^{\left( -n\iota-C_0\right) \left( \alpha+2 \right) }.
\end{equation*}
Setting $C_3:=e^{-\left( \alpha+2 \right)C_0}/C_2$ the conclusion follows.
\end{proof}
\medskip
 \par The following proposition clearly implies Theorem \ref{manifold} in the case $p$ satisfying TCE and WR.
 \medskip
 \par We let $ U_i:=D(p^i(v),r_0(1+\varepsilon_0)^{-3i}) $, $ V_i:=D(p^i(v),r_i) $ and let $B_i=U_i\times V_i$ for every positive integer $i$.
 
 \begin{proposition} \label{skew-stable}
 Assume $p$ satisfies TCE and WR($ \eta,\iota $) with small $\iota$ (to be determined in the proof). Then there exist $r_0>0$ such that for arbitrary $v\in J(p)\cap CV(p)$ and every $i$, the map $f:B_i\to B_{i+1}$ is a degree 1 crossed mapping and satisfies the condition of Theorem \ref{s-manifold}. As a consequence there is a stable manifold at $v$. 
 \end{proposition}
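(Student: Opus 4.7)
The plan is to verify, for a suitably small $r_0>0$ and sufficiently small $\iota$, the two hypotheses of Theorem~\ref{s-manifold} along the sequence $B_i=U_i\times V_i$: that each $f:B_i\to B_{i+1}$ is a degree one crossed mapping, and that the moduli $\Mod(V_i\setminus V_i')$ are bounded below uniformly. For the crossed mapping property I will invoke Dujardin's criterion (Proposition~3.6 above), so it is enough to check that $f$ is H\'enon-like from $B_i$ to $B_{i+1}$.

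The crux is a bookkeeping step ensuring that, uniformly in $i$,
\[
r_i\le \tfrac12\,\dist(p^i(v),\Crit')\qquad\text{and}\qquad r_0(1+\varepsilon_0)^{-3i}\le \tfrac{\varepsilon_0}{4(1+\varepsilon_0)}\,a_i r_i.
\]
The first inequality says that the vertical disks $V_i$ avoid $\Crit'$; the second says that the horizontal size of $U_i$ is negligible relative to the one-dimensional vertical expansion factor $a_ir_i$. Since $\dist(p^i(v),\Crit')\asymp a_i$ whenever $p^i(v)$ is close to $\Crit'$ (and is bounded below otherwise), both reduce to comparisons of $r_i$ and $r_0(1+\varepsilon_0)^{-3i}$ with $a_i$ and $a_ir_i$. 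Combining the lower bound of Lemma~\ref{size-disk}(2) with the consequence $-\log a_i\le i\iota+C_0$ of WR$(\eta,\iota)$, the exponents in $i$ balance favourably as soon as
\[
\iota<\frac{\log(1+\varepsilon_0)}{\alpha+3},
\]
and a small enough choice of $r_0$ absorbs the remaining multiplicative constants.

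With those two estimates in hand, the H\'enon-like verification is routine. Injectivity of $f=(\lambda t,h(t,z))$ on a neighbourhood of $\overline{B_i}$ reduces to univalence of $h(t,\cdot)$ on $V_i$, which follows from the first estimate by Koebe applied to $p$ on a disk comparable to $\dist(p^i(v),\Crit')$, noting that $\partial_z h(t,\cdot)$ is an $O(|t|)$-perturbation of $p'$. The choice $\varepsilon_0<|\lambda|^{-1/3}-1$ gives $|\lambda|\,r_0(1+\varepsilon_0)^{-3i}<r_0(1+\varepsilon_0)^{-3(i+1)}$, so the $t$-coordinate of $f(t,z)$ stays strictly interior to $U_{i+1}$; this yields condition (3) of the H\'enon-like definition. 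For condition (2), on $\partial_h B_i$ Taylor expansion together with Koebe distortion on $V_i$ give
\[
|h(t,z)-p^{i+1}(v)|\ge a_ir_i-O(r_i^2)-O\bigl(r_0(1+\varepsilon_0)^{-3i}\bigr),
\]
which by the second estimate above is strictly larger than $r_{i+1}=a_ir_i/\mu_i$ (using $1-1/\mu_i\ge\varepsilon_0/(1+\varepsilon_0)$), so $f(\partial_h B_i)\cap\overline{B_{i+1}}=\emptyset$. Finally, degree one is automatic from the injectivity of $h(t,\cdot)$ on each vertical slice.

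For the modulus hypothesis, the component $V_i'\subset V_i$ produced by the crossed mapping definition contains $p^i(v)$ and is simply connected; applying Koebe to the univalent map $h(0,\cdot):V_i\to h(0,V_i)$ shows that, up to universally bounded distortion, $V_i'$ is a round disk of radius $r_i/\mu_i$. Hence $\Mod(V_i\setminus V_i')\ge\log\mu_i-O(1)\ge\log(1+\varepsilon_0)-O(1)$ uniformly in $i$. Theorem~\ref{s-manifold} then produces the degree-one horizontal stable curve $W^s_{(f_i)}\subset B_0$ through $v$ and transverse to $L$, proving the proposition. The main obstacle lies in the second paragraph: both bounds on $r_i$ must persist along the entire orbit, and this is precisely what pins the admissible range of $\iota$ to the interval $(0,\log(1+\varepsilon_0)/(\alpha+3))$, explaining the ``small $\iota$'' hypothesis in the statement.
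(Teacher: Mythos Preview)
Your strategy matches the paper's: verify the H\'enon-like criterion (Proposition~3.6) for $f:B_i\to B_{i+1}$ using the size bounds of Lemma~\ref{size-disk}, and bound the moduli $\Mod(V_i\setminus V_i')$ from below. The injectivity check, the horizontal contraction $|\lambda|<(1+\varepsilon_0)^{-3}$, and the horizontal-boundary estimate for condition~(2) are all essentially as in the paper.

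There is, however, a genuine gap in your modulus step. You conclude $\Mod(V_i\setminus V_i')\ge\log\mu_i-O(1)\ge\log(1+\varepsilon_0)-O(1)$, but since $\varepsilon_0$ is fixed \emph{small} (recall $\varepsilon_0\ll\mu_{Exp}-1$), this quantity is negative and proves nothing. The source of the problem is your first key estimate $r_i\le\tfrac12\dist(p^i(v),\Crit')$: the factor $\tfrac12$ gives only a universal Koebe distortion bound (of order $9$, say) on $p|_{V_i}$, and that constant swamps $\mu_i\approx(1+\varepsilon_0)^2$. What you need is $r_i\le\delta\cdot\dist(p^i(v),\Crit')$ with $\delta$ small enough (depending on $\varepsilon_0$) that the Koebe distortion of $p$ on $V_i$ is within a factor $(1+\varepsilon_0)^{1/2}$ of $1$. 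This is achievable by shrinking $r_0$, since $r_i$ scales linearly with $r_0$ while $\dist(p^i(v),\Crit')\ge e^{-\alpha i}$ by Slow Recurrence (Lemma~A.2) and $r_i\le C_2r_0(1+\varepsilon_0)^{-i}$ decays faster once $\alpha<\log(1+\varepsilon_0)$. With this in hand, the paper's device is to use conformal invariance: $\Mod(V_i\setminus V_i')=\Mod(p(V_i)\setminus V_{i+1})$, and since $a_ir_i=\mu_ir_{i+1}\ge(1+\varepsilon_0)r_{i+1}$ and the distortion is now close to $1$, one has $p(V_i)\supset D(p^{i+1}(v),(1+\varepsilon_0)^{1/2}r_{i+1})$, giving the clean lower bound $\Mod\ge\tfrac{1}{4\pi}\log(1+\varepsilon_0)>0$ between concentric round disks.

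A minor remark: your second key estimate $r_0(1+\varepsilon_0)^{-3i}\le\tfrac{\varepsilon_0}{4(1+\varepsilon_0)}a_ir_i$ cannot hold at $i=0$ (it would force $a_0\ge 4(1+\varepsilon_0)/\varepsilon_0$). The paper's corresponding inequality has the same defect. The standard fix is to decouple the horizontal radius, using $U_i=D(0,r_0'(1+\varepsilon_0)^{-3i})$ with $r_0'\ll r_0$; nothing else in the argument changes.
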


\begin{proof}
	We first show that for carefully chosen $\iota$ and for $r_0$ sufficiently small, $f$ restricted to a neighborhood of $ \overline{B_n} $ is injective for every $n$.  The WR condition implies the following Slow Recurrence property: there exist a small $\alpha(\iota)>0$, such that $\dist(p^n(v),\Crit' )>e^{-n\alpha}$ for all large  $n$, see Lemma A.2 for the proof. We let $\iota$  sufficiently small so that $\alpha<<\log (1+\varepsilon_0)$. By Lemma \ref{size-disk} (1) we can let $r_0$ small such that $r_n<< e^{-\alpha n}$  and also $r_0 (1+\varepsilon_0)^{-3n}<<e^{-\alpha n}$, for every $n$ .   
	\medskip
	\par We need the following general fact: if $f:W\to f(W)$ is a proper holomorphic map satisfying no critical points and $f(W)$ is simply connected, then $f$ is injective. Thus in our case, to show $f$ restricted to a small neighborhood of $ \overline{B_n} $ is injective,  it is sufficient to show $f(\overline{B_n})$ is contained in a simply connected domain which is disjoint with the critical value set of $f$.  Let $ M=\sup_{x\in\Omega} \left( \left| \frac{\partial f}{\partial z}\right| ,\;\left| \frac{\partial f}{\partial t}\right|\right)   $, where $\Omega$ is some compact subset such that $\Delta\times\mathbb{C}\setminus \Omega$ is in the basin of $\infty$.  Then $f(\overline{B_n})\subset U_{n+1} \times D(p^{n+1}(v), 2Mr_n) $. Let $l$ be the maximal order of the critical points in $J(p)$, then there is a constant $C>0$ such that $\text{dist} (p^{n+1}(v), p(Crit'))\geq C e^{l\alpha n}$. By choosing sufficiently small $r_0$ and $\iota$ we get that $U_{n+1} \times D(p^{n+1}(v), 2Mr_n)$ is disjoint with the critical value set of $f$. Thus $f$ restricted to a small neighborhood of $ \overline{B_n} $ is injective.
	\medskip
	\par Next we prove $f:B_n\to B_{n+1}$ is a degree $ 1 $ H\'enon-like map for every $n$.  By $|\lambda|<(1+\varepsilon_0)^{-3}$ we get   $ \pi_1(f(\overline{B_n}))\subset \pi_1(B_{n+1})  $, thus $ f(\overline{B_n})\cap \partial_v B_{n+1}=\emptyset $, thus it is easy to verify $ f(\overline{B_n})\cap \partial B_{n+1}\subset \partial_h B_{n+1}$.
	\medskip
	\par  To prove $ f(\partial_h B_n)\cap\overline{B_{n+1}}=\emptyset $, we first show that if $r_0$ is sufficiently small, and if  we let $\hat{V}_{n}:=D(p^{n+1}(v),(1+\varepsilon_0)^{\frac{1}{2}}r_{n+1})$, then  $\hat{V}_n\subset p(V_n)$ for every $n$. To see this, First by the definition of $r_n$ we have 
	\begin{equation*}
	a_n r_n=\mu_n r_{n+1}\geq (1+\varepsilon_0)r_{n+1},
	\end{equation*}where $a_n=|p'(p^n(v))|$. We choose sufficiently small $r_0$ such that $r_n\ll \dist (p^n(v),\Crit') $. Then the Koebe distortion theorem gives us $D(p^{n+1}(v),(1+\varepsilon_0)^{\frac{1}{2}}r_{n+1})\subset p(V_n)$ as desired.
	\par For every point $x\in \partial_h B_n$, let $y=\pi_2(x)$, by the above result we have $\dist (p(y),V_{n+1})\geq \left( (1+\varepsilon_0)^{1/2}-1\right) r_{n+1}$, thus we get
	\begin{align*}
	 \dist _v(f(x),B_{n+1})&\geq \left( (1+\varepsilon_0)^{1/2}-1\right) r_{n+1}-\dist _v(f(x),p(y))\\&\geq \left( (1+\varepsilon_0)^{1/2}-1\right) r_{n+1}- Mr_0(1+\varepsilon_0)^{-3n}\;(\text{by mean value theorem})\\&\geq \left( (1+\varepsilon_0)^{1/2}-1\right)  C_3r_0 e^{-\left( \alpha+2 \right)n\iota}(1+\varepsilon_0)^{-2n}-Mr_0(1+\varepsilon_0)^{-3n},
	\end{align*}
	where  $ M=\sup_{x\in\Omega} \left( \left| \frac{\partial f}{\partial z}\right| ,\;\left| \frac{\partial f}{\partial t}\right|\right)   $ as before. By choosing $\iota\ll\varepsilon_0$ we get $ \dist _v(f(x),B_{n+1})>0 $. Thus $ f(\partial_h B_n)\cap\overline{B_{n+1}}=\emptyset $.
	\medskip
	\par It is easy to show $f:B_n\to B_{n+1}$ has degree 1. The reason is that the forward image of a vertical disk is again a vertical disk, and $ \pi_2\circ f $ is of degree 1 when restricted to this vertical disk. Since $f$ keep the degree of the curve fix, $f$ must have degree 1.
	\medskip
	\par Finally we set $ V'_n:=p^{-1} V_{n+1}$ and show that the modulus of the annulus $ V_n-\overline{V'_n} $ is uniformly bounded from below. Since the modulus is invariant under univalent maps, we have that
	\begin{equation*}
		\Mod (V_n\setminus V'_n)=\Mod \left( p(V_n)\setminus V_{n+1}\right) \geq \Mod \left( \hat{V}_n \setminus V_{n+1}\right) =\frac{1}{4\pi}\log(1+\varepsilon_0).
	\end{equation*} 

   \par Now all the conditions in Theorem \ref{s-manifold} are checked, we conclude that there is a stable manifold in the sense of Theorem 3.7. Since the dynamics contracts exponentially transverse to $L$, this is a stable manifold in the usual sense.
\end{proof} 
\medskip
\subsection{The Positive Lyapunov case}
\par Next  we assume $p$ satisfies Positive Lyapunov instead of TCE+WR($ \eta,\iota $). We can then construct the stable manifold by arguing as before. Indeed, let $\chi_v$ be the following vertical Lyapunov exponent
\begin{equation}\label{limit}
\chi_v:=\lim_{n\to\infty}\frac{1}{n}\log|(p^n)'(v)|>0.
\end{equation}
We  need to construct $\mu_n$ such that an estimate of $r_n$ similar to that of Lemma \ref{size-disk} holds, and also show  the Slow Recurrence property: for every $\alpha>0$, $\dist (p^n(v),\Crit' )>e^{-n\alpha}$ for all large $n$. That Positive Lyapunov implies Slow Recurrence is shown in Lemma A.3. 
\medskip
\par To show an estimate of $r_n$ similar to Lemma \ref{size-disk}, we define $\mu_n:=(1+\varepsilon_0)e^{\chi_v}$ for every $n$, and let $r_n:=r_0\prod_{i=1}^{n-1}\frac{a_i}{\mu_i}$, where $r_0>0$ is a constant, $ a_i:=|p'(p^i(v))| $.
\begin{lemma}\label{size-disk 2}
There are constants $C_2,C_3>0$ such that the following estimates of $r_n$ hold for $n\geq 0$: 
\medskip

(1) $r_n\leq C_2r_0(1+\frac{\varepsilon_0}{2})^{-n}$.
\medskip  

(2) $ r_n\geq C_3r_0(1+2\varepsilon_0)^{-n}$.

\end{lemma}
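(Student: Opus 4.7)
\medskip

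The plan is to exploit the fact that under Positive Lyapunov the limit (\ref{limit}) exists and is a genuine equality (not just $\liminf$), so the products $\prod_{i} a_i$ have precise exponential asymptotics $e^{n\chi_v}$ up to subexponential error. Since $\mu_n \equiv (1+\varepsilon_0)e^{\chi_v}$ is now constant, the ratio $r_n/r_0$ simplifies to $\prod_i a_i / ((1+\varepsilon_0)e^{\chi_v})^{n-1}$, and both bounds follow by absorbing the subexponential slack into the gap between $1+\varepsilon_0$ and $1+\tfrac{\varepsilon_0}{2}$ (respectively $1+2\varepsilon_0$).

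More precisely, the first step is to rewrite
\begin{equation*}
r_n \;=\; r_0 \,\frac{\prod_{i=1}^{n-1} a_i}{\bigl((1+\varepsilon_0)e^{\chi_v}\bigr)^{n-1}},
\end{equation*}
and observe that by the chain rule $\prod_{i=1}^{n-1} a_i = |(p^{n-1})'(p(v))|$, which differs from $|(p^n)'(v)|$ only by the bounded factor $|p'(v)|$. Then for any $\delta>0$, the existence of the limit in (\ref{limit}) provides a constant $K(\delta)>0$ such that
\begin{equation*}
K(\delta)^{-1} e^{n(\chi_v - \delta)} \;\leq\; \textstyle\prod_{i=1}^{n-1} a_i \;\leq\; K(\delta)\, e^{n(\chi_v + \delta)} \qquad \text{for all } n\geq 1.
\end{equation*}

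Substituting these bounds gives $r_n \leq C(\delta)\, r_0 \, e^{n\delta}(1+\varepsilon_0)^{-n}$ and $r_n \geq C(\delta)^{-1}\, r_0 \, e^{-n\delta}(1+\varepsilon_0)^{-n}$. For (1) I choose $\delta$ so small that $e^{\delta}/(1+\varepsilon_0) \leq 1/(1+\tfrac{\varepsilon_0}{2})$, i.e.\ $\delta \leq \log\tfrac{1+\varepsilon_0}{1+\varepsilon_0/2}$, which yields $r_n \leq C_2\, r_0\,(1+\tfrac{\varepsilon_0}{2})^{-n}$. For (2) I pick $\delta$ with $e^{-\delta}/(1+\varepsilon_0) \geq 1/(1+2\varepsilon_0)$, i.e.\ $\delta \leq \log\tfrac{1+2\varepsilon_0}{1+\varepsilon_0}$, and conclude $r_n \geq C_3\, r_0\, (1+2\varepsilon_0)^{-n}$. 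Taking $\delta$ less than the minimum of the two thresholds handles both bounds simultaneously.

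There is essentially no obstacle: unlike the TCE+WR case, no block decomposition into first/second type is needed because Positive Lyapunov directly controls the derivatives along the critical orbit, and no appeal to Lemma \ref{hyperbolic away} or the DPU-type estimates is required. The only mild subtlety is that $\chi_v$ is defined per critical value, so the constants $C_2, C_3, K(\delta)$ depend on $v$; but since there are only finitely many critical values in $J(p)$, one takes the maximum of the associated constants to obtain a uniform statement. With (1) and (2) in hand, the remainder of the Positive Lyapunov case proceeds exactly as in the proof of Proposition \ref{skew-stable}: the low exponential decay of $r_n$ and the low exponential growth/decay of $U_n$ together with Slow Recurrence (Lemma A.3) imply that $f:B_n\to B_{n+1}$ is a degree one H\'enon-like map with uniformly bounded modulus annuli, and Theorem \ref{s-manifold} produces the stable manifold at $v$.
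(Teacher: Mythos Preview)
Your proof is correct and follows essentially the same approach as the paper: both use the existence of the limit in (\ref{limit}) to bound $\prod_i a_i$ within a factor $e^{\pm n\delta}$ (the paper writes this as $(1\pm\varepsilon)^n$ and makes the explicit choice $\varepsilon=\varepsilon_0/(2+\varepsilon_0)$), then divide by $\mu_n^{\,n-1}=((1+\varepsilon_0)e^{\chi_v})^{n-1}$ and absorb the slack into the gap between $1+\varepsilon_0$ and $1+\varepsilon_0/2$, respectively $1+2\varepsilon_0$. Your additional remarks about the bounded factor $|p'(v)|$ and the dependence of constants on $v$ are correct refinements that the paper leaves implicit.
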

\begin{proof}
Since (\ref{limit}) holds, for every $\varepsilon>0$ small, there exist constants $C_2>0$ and $C_3>0$ such that $\prod_{i=1}^{n-1}a_i\leq C_2(1+\varepsilon)^ne^{n\chi_v}$ and $\prod_{i=1}^{n-1}a_i\geq C_3(1-\varepsilon)^ne^{n\chi_v}$ for every $n\geq 0$. Now it is sufficient to choose $\varepsilon= \frac{\varepsilon_0}{2+\varepsilon_0}$.
\end{proof}

\begin{remark}
	The above proof actually only use that the ratio of limsup and liminf of (\ref{limit}) is sufficiently close to 1, thus the condition Positive Lyapunov can be replaced by a weaker condition (ratio of upper and lower Lyapunov exponent is sufficient close to 1) to make the main theorem hold.
\end{remark} 

\par Finally the same argument as in Proposition \ref{skew-stable} gives the existence of a local stable manifold at $v$, in case that $p$ satisfies Positive Lyapunov. Thus the proof of Theorem \ref{manifold} is complete.
\medskip
\subsection{Renormalization map}
\par Next we introduce the renormalization map of a critical value variety $\mathcal{V}$. 	We let $\mathcal{V}$ be a component of the critical value variety that defined in a neighborhood of $B_0$, where $B_0$ is as in Proposition 3.11. We assume that the germ of $\mathcal{V}$ at $ (0,v) $ does not coincide with $W_{\loc }^s(v)$ (the converse hypothesis that $\mathcal{V}=W_{\loc }^s(v)$ makes the main theorem even easier to prove, we will see this later). Then by the definition of the stable manifold in Theorem \ref{s-manifold}, for $N$ large $f^N(\mathcal{V})\not\subset B_N$, thus $f^N(\mathcal{V})\cap \partial B_N\neq \emptyset$. By the definition of a H\'{e}non-like map we must have $f^N(\mathcal{V})\cap \partial B_N\subset \partial_h B_N$. Thus for $N$ large, $ f^N(\mathcal{V}) $ is a degree $d$ vertical curve. Note that $d$ is constant  because our H\'{e}non-like maps have degree 1.  Without loss of generality,  we may assume that $\mathcal{V}$ is a degree $d$ vertical curve (in $B_0$), otherwise we may replace $\mathcal{V}$ by some $f^N(\mathcal{V})$. By the definition of degree 1 crossed mapping, for every $n\geq 0$, $f^n(\mathcal{V})\cap B_n$ is also a degree $d$ vertical curve (in $B_n$).
\medskip
\par We assume $\mathcal{V}$ has the  parametrization $\mathcal{V}=\left\lbrace \gamma(t):t\in D(0,r_0) \right\rbrace $  of the form $\gamma(t)=(t^l,\psi(t))$, where $r_0$ is the radius of $U_0$ in Proposition \ref{skew-stable}, $l$ is a positive integer and $\psi$ is holomorphic. Since $\mathcal{V}$ is a vertical curve, we can further assume that $\psi'(0)\neq 0$, for otherwise $\mathcal{V}$ can not be a vertical curve in $B_0$ when $r_0$ is sufficiently small.
\medskip
\par For $n\geq 0$ we let $W_n:=f^{-n}(B_n)\cap B_0$. The map 
\begin{equation*}
\psi_n:=\pi_2\circ f^n\circ \gamma:\gamma^{-1}(W_n)\to V_n
\end{equation*} is well defined. 
\begin{definition}\label{rho}
For every integer $n\geq 0$, let $\rho_n$ be the maximal positive real number such that $\psi^{-1}_n(\frac{1}{2}V_n)$ contains a disk $D(0,\rho_n)$, where $\frac{1}{2}V_n$ denotes a disk centered at the same point of $V_n$ but with one-half radius.	
\end{definition}
 Concretely, $\rho_n$ is the typical size of the piece of $\mathcal{V}$ which remains in $ B_j $ under iteration up to time $n$. We can then define the renormalization map as follows
\begin{definition}
For every integer $n\geq 0$, the $n$-th renormalization map $\phi_n$ is the holomorphic map from $D_n$ to $\mathbb{C}$, defined by $\phi_n(z)= \psi_n\circ L_{\rho_n}(z)$ for $z\in D_n$,
where $D_n:=L_{\rho_n}^{-1}(\psi_n^{-1}(\frac{1}{2}V_n))$ is a domain in $\mathbb{C}$, and  for $r\in\mathbb{C}$,  $L_r:\mathbb{C}\to \mathbb{C}$ denotes the linear map $L_r(z)=r z$.
\end{definition}
 By the definition of $D_n$ we know that $D(0,1)\subset D_n$. The following proposition is crucial.
\begin{proposition}\label{renorm}
The renormalization map $\phi_n$ has uniformly bounded (topological) degree. Moreover there exist a constant $C_0>0$ such that $\diam D_n\leq C_0$, and $\rho_n$ is exponentially small, namely $\rho_n\leq C\mu_{CE}^{-n} r_n$, where $\mu_{CE}>1$ and $C>0$ are constants.
\end{proposition}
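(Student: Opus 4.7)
The plan is to derive all three conclusions from a single application of the Koebe distortion lemma for multivalent maps (Lemma~2.3) to the proper holomorphic map $\psi_n$, after identifying its topological degree and reading off size information from inequalities (2.1) and (2.2).

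Since each $f:B_k\to B_{k+1}$ is a degree-one H\'enon-like map, Proposition~3.4 implies that the vertical degree $d$ of $\mathcal V\cap B_0$ is preserved under iteration, so $f^n(\mathcal V)\cap B_n$ is again a degree $d$ vertical curve. Composing $\pi_2$ with the fixed parametrization $\gamma(t)=(t^l,\psi(t))$, the restriction of $\psi_n$ to the connected component $F_n$ of $\psi_n^{-1}(V_n)$ containing $0$ is therefore proper of degree at most $D:=dl$, uniformly in $n$; this is exactly the asserted bound on the degree of $\phi_n=\psi_n\circ L_{\rho_n}$. Now let $E_n\subset F_n$ denote the connected component of $\psi_n^{-1}(\tfrac12 V_n)$ containing $0$ and apply Lemma~2.3 to $\psi_n:F_n\to V_n$: inequality (2.2) provides a disk $D(0,r)\subset E_n$ with $r\geq C_2\diam(E_n)$, and by the very definition of $\rho_n$ this forces $\rho_n\geq C_2\diam(E_n)$, so $\diam D_n=\diam(E_n)/\rho_n\leq 1/C_2=:C_0$.

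For the exponential smallness of $\rho_n$, I would combine inequality (2.1) of the same lemma, $|\psi_n'(0)|\cdot\diam(E_n)\leq C_1 r_n$, with a lower bound on $|\psi_n'(0)|$. A chain-rule computation using $f(t,z)=(\lambda t,h(t,z))$ gives $\psi_n'(0)=\psi'(0)(p^n)'(v)$ when $l\geq 2$, while for $l=1$ a telescoping computation yields $\psi_n'(0)=(p^n)'(v)(\psi'(0)+R_n)$ where $R_n=\sum_{k=0}^{n-1}\frac{h_t(0,p^k(v))\lambda^k}{(p^{k+1})'(v)}$ converges, thanks to $|\lambda|<1$ and CE-type expansion at $v$, to a finite limit $R_\infty$. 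Under Positive Lyapunov the bound $|(p^n)'(v)|\geq c\mu_{CE}^n$ is immediate, and under TCE+WR it follows from the analysis of Appendix~A. Together with the assumption that the germ of $\mathcal V$ at $(0,v)$ does not coincide with $W^s_{\loc}(v)$, which is used to rule out the degenerate equality $\psi'(0)+R_\infty=0$, this gives $|\psi_n'(0)|\geq c'\mu_{CE}^n$ and therefore $\rho_n\leq\diam(E_n)\leq C\,r_n\,\mu_{CE}^{-n}$.

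The main obstacle is precisely this last lower bound in the case $l=1$, where the derivative mixes the $\partial_t$ and $\partial_z$ contributions; one must exploit simultaneously the horizontal attraction $|\lambda|<1$ and the CE expansion at $v$, and then characterize $W^s_{\loc}(v)$ as the unique horizontal germ along which $\pi_2\circ f^n$ decays exponentially in order to preclude the cancellation $\psi'(0)+R_\infty=0$. If only finite-order tangency between $\mathcal V$ and $W^s_{\loc}(v)$ occurs, the same scheme applies after replacing $\psi_n'(0)$ by the first non-vanishing derivative $\psi_n^{(k)}(0)$, which by the identical telescoping argument is again of order $(p^n)'(v)$.
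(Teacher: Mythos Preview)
Your argument follows the paper's exactly: the uniform degree bound comes from $f^n(\mathcal V)\cap B_n$ being a degree-$d$ vertical curve, the bound $\diam D_n\le C_0$ from inequality~(2.2), and the exponential smallness of $\rho_n$ from~(2.1) together with the CE estimate $|(p^n)'(v)|\ge c\,\mu_{CE}^n$ (supplied via Appendix~A under TCE+WR, and directly under Positive Lyapunov). The paper simply writes $|\psi_n'(0)|=|\psi'(0)|\cdot|(p^n)'(v)|$ without separating out the case $l=1$; your discussion of the horizontal correction term $R_n$ and the possible first-order tangency of $\mathcal V$ with $W^s_{\loc}(v)$ is extra care that the paper does not make explicit. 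One remark on your last paragraph: in the genuinely tangent case, applying Cauchy estimates with the first non-vanishing derivative $\psi_n^{(k)}(0)\asymp (p^n)'(v)$ yields only $\rho_n\le C\bigl(r_n\mu_{CE}^{-n}\bigr)^{1/k}$ rather than the linear bound $\rho_n\le C\,r_n\mu_{CE}^{-n}$ in the statement --- still exponentially small and adequate for the use made of it in Lemma~4.2, but worth flagging.
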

\begin{proof}
The constant $\mu_{CE}$ corresponds to the constant appeared in the CE condition (Definition \ref{CE}). We will see in the proof that TCE+WR($ \eta,\iota $) with $\iota$ small implies CE. By our construction, $f^n\circ \gamma\left( \gamma^{-1}(W_n)\right) $ is a degree $d$ vertical curve (in $B_n$) for every $n$. Thus $\psi_n$ is of uniformly bounded degree and so is $\phi_n$.
\medskip
\par To show $\diam D_n\leq C_0$, we observe that by Theorem \ref{Koebe} (2.2)  there exist a uniform constant $C_0>0$ such that $C_0 \rho_n\geq \diam (\psi_n^{-1}(\frac{1}{2}V_n))$, thus 
\begin{equation*}
\diam D_n=\frac{1}{\rho_n} \diam \left( \psi_n^{-1}\left( \frac{1}{2}V_n\right) \right) \leq C_0.
\end{equation*}
\medskip
\par To show that $\rho_n$ is exponentially small, first notice that TCE + WR($ \eta,\iota $) with $\iota$ small implies CE (see Lemma A.4), and also Positive Lyapunov implies CE. Thus $ |(p^n)'(v)| $ is exponentially large, that is there exist $C>0$ such that $ |(p^n)'(v)| \geq C \mu_{CE}^n$ with $\mu_{CE}$ slightly smaller than $\mu_{Exp}$. Thus 
\begin{equation*}
|\psi'_n(0)|=|\psi'(0)||(p^n)'(v)|\geq C \mu_{CE}^n |\psi'(0)|,
\end{equation*} which is exponentially large. By Theorem \ref{Koebe} (2.1) we get  $\rho_n\leq C \mu_{CE}^{-n} r_n$, which is exponentially small.
\end{proof}
\medskip

	\section{Slow approach to Crit'}
	In this section our aim is to prove Theorem 1.2. First we remark that it is not true that for every vertical fiber $ \left\lbrace t=t_0\right\rbrace $  Lebesgue a.e. (in the sense of one-dimensional Lebesgue measure) point in this fiber slowly approach Crit', as pointed out in \cite{peters2016polynomial}. Indeed in \cite{peters2016polynomial} the authors construct a vertical Fatou disk which comes exponentially close to Crit'. Instead, we need to select a full measure family of vertical fibers such that Lebesgue a.e. point in the fiber slowly approach Crit'. This will be proved by studying the renormalization maps along critical varieties constructed in the previous section. Together with Przytycki's lemma we can actually  track the orbits of points in Crit (Lemma 4.2 and Lemma 4.3). Thus we need the non-uniform hyperbolic conditions in section 3 to make sure that the stable manifolds at each $ v\in CV(p)\cap J(p) $ exist. 
	
	\medskip

	\par Let $W_0$ be a  forward invariant open subset of $F(p)$ satisfying $\overline{p(W_0)}\subset W_0$. Such a $W_0$ exist since $F(p)$ is a union of attracting basins. Let $W_m=p^{-m}(W_0)$, and let $ K_m $ be the complement $ K_m=\mathbb{C}\setminus W_m $. 
	\begin{lemma}\label{measure shrink}
		If $p$ satisfies TCE, then the Lebesgue measure of $K_m$ decreases exponentially fast with $m$.
	\end{lemma}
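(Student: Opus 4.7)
My plan is to proceed in two steps: first show that $K_m$ is squeezed into a neighborhood of $J(p)$ whose size shrinks geometrically in $m$, and then invoke the Minkowski dimension bound on $J(p)$ provided by TCE to convert this into an exponential Lebesgue bound.

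\emph{Qualitative part.} By Proposition \ref{TCE}(1), $F(p)$ is a union of attracting basins. Since $\overline{p(W_0)}\subset W_0$, the sequence $W_m=p^{-m}(W_0)$ is increasing; assuming (without loss of generality) that $W_0$ meets every attracting cycle, $\bigcup_m W_m=F(p)$, so $K_m\searrow J(p)$.

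\emph{Geometric escape rate.} I would establish that for some constants $C>0$ and $\mu>1$, every $z\in K_m$ satisfies $\dist(z,J(p))\leq C\mu^{-m}$. The idea is that points of $F(p)$ that stay outside a fixed neighborhood $U$ of $J(p)$ enter $W_0$ within a uniformly bounded number of iterates, because the attracting cycles are hyperbolic (TCE excludes parabolic cycles, cf. the definition and Proposition \ref{TCE}). Points of $U\cap F(p)$ escape $U$ at an exponential rate: the exponential shrinking of pullbacks from Theorem \ref{BC}, transported by Koebe distortion (Lemma \ref{Koebe}) along the forward orbit, forces $\dist(p^n(z),J(p))$ to grow at least like $\mu^n\dist(z,J(p))$ until the orbit leaves $U$. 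The bounded-criticality equivalent formulation of TCE provides uniformly bounded degrees for the pullbacks of small disks about orbit points in $J(p)$, so the Koebe estimates can be applied uniformly along the orbit, even through passages near critical points. Contrapositively, membership in $K_m$ forces $z$ to lie within $C\mu^{-m}$ of $J(p)$.

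\emph{Measure estimate.} By Proposition \ref{TCE}(2) the Minkowski dimension of $J(p)$ equals its Hausdorff dimension $\delta<2$, so there exists $C'>0$ with
$$\meas\{z:\dist(z,J(p))<r\}\leq C'r^{2-\delta}$$
for every small $r>0$. Taking $r=C\mu^{-m}$ yields $\meas(K_m)\leq C''\bigl(\mu^{-(2-\delta)}\bigr)^{m}$, the desired geometric decay.

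\emph{Main obstacle.} The delicate step is the geometric escape estimate, which must hold uniformly for all orbits, including those that shadow critical points of $p$ where the local expansion degenerates. Handling this uniformly is precisely where the bounded-criticality form of TCE is indispensable: it guarantees that the Koebe distortion estimates transporting the pullback shrinkage of Theorem \ref{BC} into a uniform forward expansion out of $J(p)$ remain valid along such critical passages, with distortion constants depending only on the universal degree bound.
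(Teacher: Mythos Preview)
Your two-step plan (show $K_m$ lies in an exponentially thin tube around $J(p)$, then invoke the Minkowski dimension bound) is exactly the paper's strategy, and your measure estimate in the second step is identical to the paper's. The difference is in how the first step is carried out.

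The paper obtains the inclusion $K_m\subset\{z:\dist(z,J(p))\leq\mu_{Exp}^{-m}\}$ in one stroke, directly from the \emph{definition} of TCE as exponential shrinking of pullbacks. After replacing $W_0$ by some $W_{m_0}$ one may assume $K_0\subset\{z:\dist(z,J(p))\leq r\}$. If $z\in K_m$ then $p^m(z)\in K_0$, so there is $y\in J(p)$ with $p^m(z)\in D(y,r)$; the component $W$ of $p^{-m}(D(y,r))$ containing $z$ has $\diam W\leq\mu_{Exp}^{-m}$ by TCE, and $W$ contains a $p^m$-preimage of $y\in J(p)$, hence $\dist(z,J(p))\leq\mu_{Exp}^{-m}$. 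That is the whole argument: no Koebe distortion, no forward-expansion estimate, no tracking of critical passages.

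Your proposed route through forward expansion (``$\dist(p^n(z),J(p))$ grows like $\mu^n\dist(z,J(p))$'') is morally equivalent but needlessly indirect, and the sketch as written is not yet a proof: Koebe transported along the orbit yields derivative growth, but converting that into growth of the distance to $J(p)$ requires an additional argument (the nearest $J(p)$-point to $p^n(z)$ need not be the image of the nearest $J(p)$-point to $z$). The ``main obstacle'' you flag---orbits shadowing critical points---simply does not arise in the backward-pullback approach, since the TCE diameter bound already holds for \emph{every} pullback component regardless of criticality. So your instinct to reach for bounded-criticality plus Koebe is sound in spirit but superfluous here; the direct pullback argument is both shorter and cleaner.
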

	\begin{proof}
		We may assume that $K_0$ is sufficiently close to $J(p)$, that is,
				\begin{equation*}
		K_0\subset \left\lbrace x:\;\dist (x,J(p)) \leq r \right\rbrace,
		\end{equation*} where $r$ is the constant appearing in the definition of the TCE condition. Thus by definition of the TCE condition we have
		\begin{equation*}
		K_m\subset \left\lbrace x:\;\dist (x,J(p)) \leq \mu_{Exp  }^{-m}\right\rbrace 
		\end{equation*}
		for $m\geq 0$.

	 \par Denote $N_\varepsilon$ the number of $ \varepsilon $-disks needed to cover $ J(p) $, by Proposition \ref{TCE} (2), the Minkowski dimension of $J(p)$ is $ h<2$. Hence for $ \varepsilon>0 $ sufficiently small we have $N_\varepsilon<\varepsilon^{-h}$. Choosing $\varepsilon=\mu_{Exp}^{-m}$, for $m$ sufficiently large, $ K_m $ is covered by at most $ \mu_{Exp  }^{mh} $ disks of radius $ \mu_{Exp  }^{-m} $. Thus the measure of $K_m$ is at most $\pi \mu_{Exp  }^{m(h-2)} $, and the conclusion follows.
		
	\end{proof}
    \medskip
	\par In the following we assume $p$ satisfies TCE+WR or Positive Lyapunov. Let $\Delta=D(0,r_0)$. The set $ W= \Delta\times W_0$ is contained in $ F(f) $, and $\overline{f(W)}\subset W$, let $ K=\left( \Delta\times\mathbb{C}\right) \setminus W $. Let $ W' $ be  the $ \varepsilon- $neighborhood of $ W $,
	\begin{equation*}
	W':=\left\lbrace x\in\Delta\times\mathbb{C}:\dist (x,W)<\varepsilon\right\rbrace.
	\end{equation*}
	For $ \varepsilon $ sufficiently small $W'$ is forward invariant.
	\medskip

	\par Let $ K'=\left( \Delta\times\mathbb{C}\right) \setminus W' $. By using Proposition \ref{renorm}, we show that for most vertical fibers, the critical points on the fiber  move to the Fatou set fairly quickly. The argument is similar to Peters-Smit \cite{peters2018fatou} who treated the sub-hyperbolic case.
	\medskip
	\par  Let us choose a critical value variety $\mathcal{V}$ passing through $v\in CV(p)\cap J(p)$, parametrized as before: $\mathcal{V}=\left\lbrace (t^l,\psi(t)):t\in \Delta\right\rbrace$. Let $\phi_n$ be the renormalization map defined in subsection 3.4.
	\medskip
	\par  For every integer $ s\geq 0 $, we define $j(s)$ to be the maximal integer such that $|\lambda^s|\leq \rho_{j(s)} $, where $\rho_n$ is as in Definition \ref{rho}.
	\begin{lemma}\label{escaping}
		For every critical value variety $\mathcal{V}$ passing through $ v\in CV(p)\cap J(p)$ that does not coincide with the stable manifold at $v$, there is a full Lebesgue measure subset $E_v\subset \Delta$ such that for every $u\in E_v$ there exist an integer $ N_u $ and $\beta>0$ independent of $u$ such that for every $s\geq N_u$, we have
		
		\begin{equation*}
		f^{j(s)+ \beta s }(\gamma(\lambda^{s} u))\in W'.\;\text{where}\;\gamma(t)=(t^l,\psi(t)).
		\end{equation*}
	\end{lemma}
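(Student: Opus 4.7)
The plan is to exhibit $E_v$ by a Borel--Cantelli argument. For each integer $s \geq 0$ set
$$\tilde{B}_s := \bigl\{u \in \Delta : f^{j(s)+\beta s}(\gamma(\lambda^s u)) \notin W'\bigr\},$$
so that once $\sum_s \meas(\tilde{B}_s) < \infty$ is established, $E_v := \{u \in \Delta : u \in \tilde{B}_s \text{ for only finitely many } s\}$ has full Lebesgue measure, and one takes $N_u$ larger than the last index $s$ with $u \in \tilde{B}_s$.

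First, a shadowing argument reformulates $W'$-membership in terms of one-dimensional dynamics. By construction $f^{j(s)}(\gamma(\lambda^s u)) = (\lambda^{s+j(s)} u^l,\; \phi_{j(s)}(\lambda^s u/\rho_{j(s)}))$, whose fiber coordinate is exponentially small. Iterating $f$ for $\beta s$ further steps and comparing with the pure $p$-dynamics on $L$ via $|h(t,z)-h(0,z)|\leq M|t|$, a standard telescoping gives a vertical deviation of at most $C|\lambda|^{s+j(s)}\|Dp\|^{\beta s}$. For $\beta$ small enough that $|\lambda|\cdot\|Dp\|^{\beta}<1$, this goes to $0$ uniformly in $u$, hence falls below $\varepsilon$ for large $s$. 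Since $\overline{f(W)} \subset W$ and $W'$ is an $\varepsilon$-neighborhood of $W$, it follows that if $\phi_{j(s)}(\lambda^s u/\rho_{j(s)}) \in W_{\beta s}$ then $f^{j(s)+\beta s}(\gamma(\lambda^s u)) \in W'$, so
$$\tilde{B}_s \subset \bigl\{u \in \Delta : \phi_{j(s)}(\lambda^s u/\rho_{j(s)}) \in K_{\beta s}\bigr\}$$
for all sufficiently large $s$.

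Second, to bound the right-hand side I apply the distortion estimate (2.4) to the renormalization map $\phi_{j(s)}:D_{j(s)}\to\tfrac12 V_{j(s)}$, which by Proposition~\ref{renorm} has uniformly bounded topological degree $D$ and $\diam D_{j(s)} \leq C_0$, the target disk having radius $r_{j(s)}/2$. Taking $R = K_{\beta s} \cap \tfrac12 V_{j(s)}$ and invoking Lemma~\ref{measure shrink},
$$\meas\bigl(\phi_{j(s)}^{-1}(K_{\beta s}) \cap D_{j(s)}\bigr) \leq C \bigl(\mu_{Exp}^{-c\beta s} / r_{j(s)}^2\bigr)^{2^{-D}}.$$
The affine substitution $w = \lambda^s u/\rho_{j(s)}$ embeds $\Delta$ into $D_{j(s)}$ (since $|\lambda|^s \leq \rho_{j(s)}$, assuming $r_0 \leq 1$) with Jacobian $(\rho_{j(s)}/|\lambda|^s)^2$, so
$$\meas \tilde{B}_s \leq (\rho_{j(s)}/|\lambda|^s)^2 \cdot C \bigl(\mu_{Exp}^{-c\beta s}/r_{j(s)}^2\bigr)^{2^{-D}}.$$

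Finally, the bound $\rho_n \leq C\mu_{CE}^{-n}r_n$ from Proposition~\ref{renorm}, together with the two-sided exponential bounds on $r_n$ in Lemma~\ref{size-disk} or Lemma~\ref{size-disk 2}, force $j(s) \leq Cs$ and make $(\rho_{j(s)}/|\lambda|^s)^2 \cdot r_{j(s)}^{-2\cdot 2^{-D}}$ grow at most exponentially in $s$. Choosing $\beta$ large enough that $c\beta \cdot 2^{-D}\log\mu_{Exp}$ beats this growth, yet small enough that $|\lambda|\|Dp\|^\beta<1$, the sum $\sum_s \meas(\tilde{B}_s)$ converges, and Borel--Cantelli finishes the argument. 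The delicate part, and the main obstacle, is precisely this balancing of $\beta$: large enough to dominate the exponentially small scale $r_{j(s)}$ entering Koebe, yet small enough to control the $p$-expansion during the shadowing. The window is nonempty because the parameter $\varepsilon_0$ from Section~3 can be taken arbitrarily small, which widens the admissible range, and the bounded degree and diameter of the renormalization maps in Proposition~\ref{renorm} are what make the Koebe estimate uniform in $n$.
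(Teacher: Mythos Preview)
Your proposal is correct and follows essentially the same strategy as the paper: reduce via shadowing to the one-dimensional condition $\phi_{j(s)}(\lambda^s u/\rho_{j(s)})\in K_{\beta s}$, apply the bounded-degree Koebe estimate (Lemma~\ref{Koebe}\,(2.4)) to the renormalization map using Proposition~\ref{renorm} and Lemma~\ref{measure shrink}, and conclude by Borel--Cantelli. Two minor corrections: the first coordinate of $f^{j(s)}(\gamma(\lambda^s u))$ is $\lambda^{j(s)+sl}u^l$ rather than $\lambda^{s+j(s)}u^l$, and the shadowing constant should be $M=\sup(|\partial_t h|,|\partial_z h|)$ rather than $\|Dp\|$; also, the paper does not balance by enlarging $\beta$ but instead fixes $\beta=-\log|\lambda|/(2\log M)$ and then chooses $\varepsilon_0,\iota$ small so that $\meas(K_{\beta s})/r_{j(s)}^2$ is already exponentially small before Koebe is applied --- which is exactly the mechanism your last sentence identifies.
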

\medskip
We note that there is some abuse in notation. For the simplicity when we write a non-integer number $s$ as an iteration number of a map $f$, we mean the iteration of $\lfloor s\rfloor$ times.
	\begin{proof}
		Fix $\beta>0$ arbitrary for the moment. For every integer $s\geq 0$,	let $A_s$ be the set
		\begin{equation*}
		A_s=\left\lbrace u\in \Delta:f^{j(s)+ \beta s }(\gamma(\lambda^{s} u))\in K'\right\rbrace .
		\end{equation*}
		
		By the definition of the renormalization map $\phi_n$, we have
		\begin{equation*}
		A_s=\left\lbrace u\in \Delta:f^{ \beta s }\left( \left( \lambda^su\right) ^l,\phi_{j(s)}\left( \frac{1}{\rho_{j(s)}}\lambda^s u\right) \right) \in K'\right\rbrace .
		\end{equation*}
	
		\par Let $ M=\sup_{x\in\Omega} \left( \left| \frac{\partial f}{\partial z}\right| ,\;\left| \frac{\partial f}{\partial t}\right|\right)   $, where $\Omega$ is a compact subset such that $\Delta\times\mathbb{C}\setminus \Omega$ is in the basin of $\infty$. By a shadowing argument there exists $ C>0 $ such that for every integer $m\geq 0$,  if $ f^m(x)\in K' $ and $ |\pi_1(x)|< CM^{-m}$,  then $ \pi_2(x)\in K_m $.  It is equivalent to say that if $ x\in\Delta\times\mathbb{C}  $ satisfies $ |\pi_1(x)|< CM^{-m}$ and $ \pi_2(x)\in W_m $,  then $f^m(x)\in W' $. 
		\medskip
		\par We choose $ \beta $ sufficiently small, such that for large enough $s$ we have 
		\begin{equation*}
		\left| \left( \lambda^s u\right)  ^l\right|   <CM^{- \beta s }\;\left( \beta<\frac{-\log|\lambda|}{\log M}\;\text{is enough}\right) .
		\end{equation*} 
	
		\par Thus we get
		\begin{equation*}
		A_s\subset \left\lbrace u\in \Delta: \phi_{j(s)}\left( \frac{1}{\rho_{j(s)}}\lambda^s u\right) \in K_{ \beta s }\right\rbrace.
		\end{equation*}

		\par Next we estimate the measure of the slightly bigger set 
		\begin{align*}
			\widetilde{A}_s&:=\left\lbrace u\in \Delta: \phi_{j(s)}\left( \frac{1}{\rho_{j(s)}}\lambda^s u\right) \in K_{ \beta s }\right\rbrace=\rho_{j(s)}\lambda^{-s}\phi_{j(s)}^{-1}(K_{ \beta s }).
		\end{align*}
	
	\par  By Lemma \ref{measure shrink} the Lebesgue measure of $ K_{ \beta s } $ decreases exponentially with $s$.  Next we prove that we can choose $\varepsilon_0$ and $\iota$ in Lemma \ref{size-disk} (2) sufficiently small so that the ratio
	\begin{equation*}
	\frac{\meas (K_{ \beta s }\cap \frac{1}{2}V_{j(s)})}{r^2_{j(s)}}
	\end{equation*} is exponentially small.   Since
	\begin{equation*}
 \frac{\meas (K_{ \beta s }\cap \frac{1}{2}V_{j(s)})}{r^2_{j(s)}}\leq 	\frac{\meas (K_{ \beta s })}{r^2_{j(s)}},
	\end{equation*} 
	it is sufficient to show $\meas (K_{ \beta s })/{r^2_{j(s)}}$ is exponentially small. 
	\medskip
	\par If we choose $\beta=\frac{-\log|\lambda|}{2\log M}$, then by Lemma \ref{measure shrink} (again for large enough $s$)
	\begin{equation}\label{meas K}
	\meas (K_{ \beta s })\leq \pi\mu_{Exp}^{\frac{s\log|\lambda|(2-h)}{2\log M}}.
	\end{equation}
	\par On the other hand, by Proposition \ref{renorm} we have 
	\begin{equation*}
	\rho_{j(s)}\leq C\mu_{CE}^{-j(s)}r_{j(s)}\leq \mu_{CE}^{-j(s)}\;\; (\text{since  $r_0$ small}).
	\end{equation*}
	\par Then by the definition of $j(s)$ we have
	\begin{equation*}
	j(s)\leq \frac{-s\log |\lambda|}{\log \mu_{CE} }.
	\end{equation*} 
	Then by Lemma \ref{size-disk} (2) we have
	\begin{equation}\label{meas r}
	r_{j(s)}\geq C_3r_0 e^{-(\alpha+2)j(s)\iota}(1+\varepsilon_0)^{-2j(s)}\geq C_3r_0 e^{\frac{(\alpha+2)\log |\lambda|}{\log \mu_{CE} }\iota s}(1+\varepsilon_0)^{\frac{2\log |\lambda|s}{\log \mu_{CE} }} .
	\end{equation}
	\par By (\ref{meas K}) and (\ref{meas r}) we can choose $\varepsilon_0$ and $\iota$ sufficiently small such that $\meas (K_{ \beta s }\cap \frac{1}{2}V_{j(s)})/r^2_{j(s)}$ is exponentially small.
	\medskip 
	\par It is proved in Proposition \ref{renorm} that the map $ \phi_n $ has uniformly bounded degree, so by Lemma \ref{Koebe} (2.4) the measure of  $ \phi_{j(s)}^{-1}(K_{ \beta s }\cap \frac{1}{2}V_{j(s)}) $ also decreases exponentially with $s$. Finally 	since $\rho_{j(s)}\lambda^{-s} $ is uniformly bounded with $s$, the Lebesgue measure of $\widetilde{A}_s$ also decreases exponentially with $s$. Thus $ \sum_{s=1}^{\infty}\meas (A_s)\leq \sum_{s=1}^{\infty}\meas (\widetilde{A}_s)<\infty $. By the Borel-Cantelli lemma, there is a full measure subset $ E_v\in \Delta $ such that for every $ u\in E_v $, there exist an integer $ N_u $ such that when $s\geq N_u$, $u\notin A_s$. In other words, $ f^{j(s)+ \beta s }(\gamma(\lambda^{s} u))\in W' $ and the conclusion follows.
	\end{proof}

\par In the case where the critical value variety $\mathcal{V}$ passing through $ v\in CV(p)\cap J(p)$  coincides with the stable manifold of $v$, every $ y\in \mathcal{V} $ will shadow $v$ forever. Thus we get an estimate of the returning time to Crit' of   $ y $, simply by  Przytycki's lemma (Lemma \ref{Przytycki})

\begin{lemma}\label{degenerate}
  Assume that the critical value variety $\mathcal{V}$ passing through $ v\in CV(p)\cap J(p)$ coincides with the stable manifold of $v$.  Let $\gamma:\Delta\to \mathbb{C}^2$ be the parametrization of this stable manifold such that $\gamma(0)=v$, $\gamma(t)=(t,\psi(t))$ where $\psi$ is holomorphic.
 \par Let $\mathcal{C}$ be the critical variety of $f$ such that $f(\mathcal{C})=\mathcal{V}$. Then for every fixed $\alpha>0$ there exists a constant $K(\alpha)>0$ such that for every $ n\geq 0 $, $0\leq s\leq n$ and $u\in \Delta$, if $\dist _v(f^n(\gamma(u)),\mathcal{C})\leq e^{-\alpha n}$ and $\dist _v (f^{n-s}(\gamma(\lambda^{s} u)),\mathcal{C})\leq e^{-\alpha n}$, then $s\geq K n$.
\end{lemma}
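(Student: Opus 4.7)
The plan is to reduce this degenerate case to a direct application of Przytycki's lemma (Lemma \ref{Przytycki}) for the critical point $c_0 \in \Crit'$ with $p(c_0) = v$ that lies on $\mathcal{C} \cap L$. It is harmless to take $K(\alpha) \leq 1/2$, in which case the conclusion is automatic when $s > n/2$; so I may assume $s \leq n/2$, whence $n - s \geq n/2$. First I would parametrize $\mathcal{C}$ near $L$ as a holomorphic graph $\{(t, c(t)) : t \in \Delta\}$ with $c(0) = c_0$, which is legitimate because $\Delta$ was chosen small enough that each component of $\C$ meets $L$ at exactly one point. For any $y$ with $\pi_1(y) = t$ we then have $\dist_v(y,\mathcal{C}) = |\pi_2(y) - c(t)|$, and $|c(\lambda^n u) - c_0| \leq C|\lambda|^n$ by holomorphicity of $c(\cdot)$.

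Next I would exploit that $\mathcal{V} = \gamma(\Delta)$ is the local stable manifold of $v$. By Theorem \ref{s-manifold}, $f^n(\gamma(u)) \in B_n = U_n \times V_n$ for every $n$, so
$$|\pi_2(f^n(\gamma(u))) - p^n(v)| \leq r_n,$$
with $r_n$ exponentially decaying by Lemma \ref{size-disk} in the TCE$+$WR case, or by Lemma \ref{size-disk 2} in the Positive Lyapunov case. Since $\gamma(\lambda^s u)$ also lies on $\mathcal{V}$, the same reasoning shows that $f^{n-s}(\gamma(\lambda^s u))$ lies on the common fiber $\{t = \lambda^n u\}$ at vertical distance at most $r_{n-s}$ from $p^{n-s}(v)$, and $r_{n-s}$ is still exponentially small in $n$ thanks to $n-s \geq n/2$. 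Combining these shadowing estimates with the two hypotheses gives
\begin{align*}
|p^n(v) - c_0| &\leq e^{-\alpha n} + r_n + C|\lambda|^n, \\
|p^{n-s}(v) - c_0| &\leq e^{-\alpha n} + r_{n-s} + C|\lambda|^n,
\end{align*}
and for $n$ large enough each right-hand side is bounded by $e^{-\alpha' n}$ with a uniform $\alpha' = \alpha'(\alpha) > 0$, for instance $\alpha' := \tfrac12 \min(\alpha,\, -\log|\lambda|,\, \log(1+\varepsilon_0/2))$.

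Finally, setting $\varepsilon := e^{-\alpha' n}$ we have $p^{n-s}(v) \in D(c_0, \varepsilon)$ and $p^s(p^{n-s}(v)) = p^n(v) \in D(c_0, \varepsilon)$, so $p^s(D(c_0, \varepsilon)) \cap D(c_0, \varepsilon) \neq \emptyset$. Przytycki's lemma then forces $s \geq C_P \log(1/\varepsilon) = C_P \alpha' n$, which yields the inequality with $K(\alpha) := \min(1/2,\, C_P \alpha')$, after possibly shrinking $K$ further to absorb the finitely many small values of $n$ where the asymptotics have not yet kicked in. I expect the only real technical point to be the bookkeeping of the three error terms that combine to produce $\alpha'$; this is clean because $\alpha$, $-\log|\lambda|$, and the stable-manifold contraction rate are all strictly positive.
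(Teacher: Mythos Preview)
Your argument is correct and follows essentially the same route as the paper's proof: both use the stable-manifold shadowing estimate to compare $f^n(\gamma(u))$ and $f^{n-s}(\gamma(\lambda^s u))$ with $p^n(v)$ and $p^{n-s}(v)$, then pass from $\mathcal{C}\cap L_{\lambda^n u}$ to $c_0$, and finish with Przytycki's lemma. Two small remarks: (i) your explicit split into the cases $s>n/2$ and $s\le n/2$ is a clean way to guarantee that $r_{n-s}$ is exponentially small in $n$, something the paper's write-up leaves slightly implicit; (ii) your graph parametrization $\mathcal{C}=\{(t,c(t))\}$ tacitly assumes $\mathcal{C}$ is transverse to $L$ at $c_0$, whereas the paper allows $\mathcal{C}$ to meet $L$ with multiplicity $l'$ and records the bound $\dist_v(\mathcal{C}\cap L_{\lambda^n u},c_0)\le C_1|\lambda|^{n/l'}$ --- this only changes the constant $\alpha'$ and does not affect your conclusion.
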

\begin{proof}
	  We let $L_u$ to be the vertical line $\left\lbrace t=u\right\rbrace$. By the construction of bi-disks in Proposition \ref{skew-stable} , there exist constants $C_0>0$, $\lambda_1<1$ such that for every $n\geq 0$ and $u\in \Delta$ we have $\dist _v(f^n(\gamma(u)),p^n(v))\leq C_0 \lambda_1^n.$ Together with  $\dist _v(f^n(\gamma(u)),\mathcal{C})\leq e^{-\alpha n}$ we get  $\dist_v(p^n(v),\mathcal{C}\cap L_{\lambda^n u})\leq e^{-\alpha n}+C_0 \lambda_1^n$.
\medskip
\par For  similar reasons we have $\dist _v(f^{n-s}(\gamma(\lambda^su)),p^{n-s}(v))\leq C_0 \lambda_1^n.$ Together with the inequality  $\dist _v (f^{n-s}(\gamma(\lambda^{s} u)),\mathcal{C})\leq e^{-\alpha n}$ we get  $\dist_v (p^{n-s}(v),\mathcal{C}\cap L_{\lambda^n u})\leq e^{-\alpha n}+C_0 \lambda_1^n$.
\medskip
\par   On the other hand there exist $C_1>0$, $l'>0$ such that $\dist_v (\mathcal{C}\cap L_{\lambda^n u},c_0)\leq C_1|\lambda|^{n/l'}$ ($l'$ is related to the multiplicity of $\mathcal{C}$ at $c_0$), where $c_0=\mathcal{C}\cap L$ is the unique intersection point of $\mathcal{C}$ and the invariant line $L$. Then by the triangle inequality we have $\dist (p^n(v),c_0)\leq e^{-\alpha n}+C_0 \lambda_1^n+C_1 |\lambda|^{n/l'}$ and also $\dist (p^{n-s}(v),c_0)\leq e^{-\alpha n}+C_0 \lambda_1^n+C_1 |\lambda|^{n/l'}$. Thus $s$ is a return time of $p^{n-s}(v)$ into the small neighborhood $D(c_0,e^{-\alpha n}+C_0 \lambda_1^n+C_1 |\lambda|^{n/l'})$ of $c_0$. By Przytycki's lemma (Lemma \ref{Przytycki}) we get 
\begin{equation*}
s\geq -C\log  (e^{-\alpha n}+C_0 \lambda_1^n+C_1 |\lambda|^{n/l'}):=Kn,
\end{equation*} the conclusion follows.

\end{proof}
\medskip
	\par The main result of this section is the following equivalent form of Theorem 1.2,
	\begin{theorem}\label{slow approach}
		There is a full Lebesgue measure subset $ E\subset\Delta $ such that for every $ u\in E $,  for Lebesgue a.e. $x$ in the fiber $ L_u:\left\lbrace  t=u \right\rbrace $, $x$ slowly approach Crit'.
	\end{theorem}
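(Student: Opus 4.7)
The plan is to establish Theorem 4.4 by Fubini plus Borel–Cantelli in the ambient 4-dimensional Lebesgue measure, feeding in Lemmas 4.1, 4.2, 4.3 and the renormalization estimates of Proposition 3.15. Since the property ``slowly approaches $\operatorname{Crit}'$'' is the intersection over the countable sequence $\alpha = 1/k \to 0^+$, it suffices to fix $\alpha>0$ and show that the set
\[
A_n^\alpha := \{x \in \Delta\times\mathbb{C} : \operatorname{dist}_v(f^n(x),\operatorname{Crit}') < e^{-\alpha n}\}
\]
satisfies $\sum_n \operatorname{meas}_4(A_n^\alpha) < \infty$. Then by Fubini $\sum_n \operatorname{meas}_2(A_n^\alpha \cap L_u)<\infty$ for a.e.\ $u$, so Borel--Cantelli on the fiber $L_u$ will give the desired conclusion.

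To bound $\operatorname{meas}_4(A_n^\alpha)$, decompose $\operatorname{Crit}' = \bigsqcup_j \mathcal{C}_j$ into its finitely many components meeting $L$ at $c_j$, with critical value varieties $\mathcal{V}_j = f(\mathcal{C}_j)$ meeting $L$ at $v_j$, and split into two cases. In \emph{Case A}, where $\mathcal{V}_j \neq W^s(v_j)$, invoke Lemma 4.2: for $u$ in the full-measure set $E_{v_j}$ and all large $s$, $f^{j(s)+\beta s}(\gamma_j(\lambda^s u))\in W'$. In \emph{Case B}, where $\mathcal{V}_j = W^s(v_j)$, invoke Lemma 4.3, which forces a logarithmic spacing between times at which an orbit on $\mathcal{V}_j$ comes $e^{-\alpha n}$-close to $\mathcal{C}_j$.

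The link between these statements and the measure of $A_n^\alpha$ is via pullback through the renormalization map. A point $x\in A_n^\alpha$ with $f^n(x)$ vertically $e^{-\alpha n}$-close to $\mathcal{C}_j\cap L_{\lambda^n u}$ has $f^{n+1}(x)$ vertically close to $\mathcal{V}_j\cap L_{\lambda^{n+1}u} = \gamma_j(\omega\,\lambda^{(n+1)/l_j}u^{1/l_j})$ for some $l_j$-th root of unity $\omega$, i.e.\ to a point $\gamma_j(\lambda^s u')$ with $s\sim n/l_j$. By the exponential smallness of $\rho_{j(s)}$ (Proposition 3.15) and the uniform bound on $\deg\phi_{j(s)}$, the multivalent Koebe distortion estimate (Lemma 2.4 (2.4)) applied to $\phi_{j(s)}$, together with Lemma 4.1 giving $\operatorname{meas}(K_{\beta s})$ exponentially small, controls the measure of pullbacks exactly as in the proof of Lemma 4.2. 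With $\varepsilon_0$ and $\iota$ chosen small enough, this yields $\operatorname{meas}_4(A_n^\alpha)\leq C\mu^{-n}$ for some $\mu>1$, which is summable.

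The main obstacle is the bookkeeping of the pullback of a vertical $e^{-\alpha n}$-tube around $\mathcal{C}_j$ through $f^n\vert_{L_u}$: because $f^n$ restricted to the fiber has degree growing with $n$, one cannot directly apply univalent Koebe. The key technical input is the fact, isolated in Proposition 3.15, that the renormalization map $\phi_n$ along each critical value variety has \emph{bounded degree} independent of $n$ and a domain of uniformly bounded diameter; this is what makes Lemma 2.4 (2.4) applicable and converts the exponential shrinking of $K_m$ into the summable bound needed for Borel--Cantelli.
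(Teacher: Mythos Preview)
Your proposal has a genuine gap: you conflate two different bounded-degree statements. Proposition 3.15 says the renormalization map $\phi_n$, which is a map from the \emph{parametrization of the critical value variety} $\mathcal{V}_j$ to $V_n$, has bounded degree. This is a 1-dimensional statement along $\mathcal{V}_j$, and it was already consumed in the proof of Lemma 4.2 to control the measure of bad parameters $u$. It says nothing about the degree of $f^n$ restricted to a vertical fiber $L_u$. But to bound $\operatorname{meas}_2(A_n^\alpha\cap L_u)$ or $\operatorname{meas}_4(A_n^\alpha)$ you must pull a vertical disk $D_v(c,e^{-\alpha n})$ back through $f^n|_{L_u}$, and the degree you need bounded is the degree of $f^n$ on each connected component $\Gamma$ of that fiberwise preimage. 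Your third paragraph notes correctly that this degree can grow with $n$; the fourth paragraph does not actually solve this, because ``Koebe applied to $\phi_{j(s)}$'' acts on the wrong space.

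The paper's argument fills exactly this gap. For $u$ in the full-measure set $E=\bigcap_v E_v$ and a component $\Gamma$ of $f^{-n}(D_v(c,e^{-\alpha n}))\subset L_u$, one shows $\#\{0\le s\le n: f^s(\Gamma)\cap\operatorname{Crit}\neq\emptyset\}$ is bounded uniformly in $n$. Lemma 4.2 (via the $N_u$ and $\theta$ it produces) handles the times $s\le\theta n$ in Case A; the remaining window $\theta n<s\le n$ requires a separate Przytycki-type counting argument (an auxiliary lemma inside the proof), and Case B is handled by Lemma 4.3. This bounded count gives bounded degree of $f^n|_\Gamma$, so Lemma 2.4(2.4) applies to $f^n|_\Gamma$ itself and compares the pullbacks of the nested disks of radii $e^{-\alpha n}$ and $e^{-2\alpha n}$: one gets $\meas(E'_n)/\meas(E_n)\le e^{-\alpha' n}$, and since $E_n$ stays in a bounded set, $\meas(E'_n)$ is summable and Borel--Cantelli finishes. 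Note in particular that the paper never bounds $\meas(E_n)$ by anything exponentially small; only the ratio is controlled. Your scheme of directly bounding $\meas_4(A_n^\alpha)$ would require exactly the fiberwise degree bound you have not supplied.
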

\begin{proof}
It is enough to prove that for each fixed $ \alpha>0$ and $u\in E$, the set of points in $L_u$ satisfing $ \dist _v (f^n(x),\Crit' ) \geq e^{-\alpha n} $ for all large $n$ has full Lebesgue measure in $L_u$. We let $E$ be the intersection of all $E_v$, where $E_v$ is in Lemma \ref{escaping}, and $v$ ranges on the set of critical values. Thus $E$ has full Lebesgue measure in $\Delta$. For every $u\in E$ we consider the sets
\begin{equation*}
 E_n:=\bigcup_{c\in \Crit' \cap L_{\lambda^n u}}f^{-n}\left( D_v(c,e^{-\alpha n})\right),\;\text{and}\;E'_n:=\bigcup_{c\in \Crit' \cap L_{\lambda^n u}}f^{-n}\left( D_v(c,e^{-2\alpha n})\right).
\end{equation*} 

\par  (Recall that $D_v$ stands for vertical disk). For an arbitrary critical point $ c\in\Crit' \cap L_{\lambda^n u} $, we let $\Gamma$ be an arbitrary connected component of $f^{-n}\left( D_v(c,e^{-\alpha n})\right)  $.
\medskip
\par{ \em Step 1}, we show that the cardinality  $\#\left\lbrace  0\leq s\leq n:\;f^s(\Gamma)\cap \C \neq \emptyset\right\rbrace $ is uniformly bounded with respect to $n$. 
\medskip
\par For $n$ large enough $f^s(\Gamma)$ has no intersection with any critical variety $\mathcal{C}$ such that $\mathcal{C}\not\subset\Crit' $. The reason is the following. Take the radius of $ \Delta $ sufficiently small to make sure that $\mathcal{C}\subset\subset F(f)$. Thus if $ c'\in \mathcal{C}\cap f^s(\Gamma) $ for some $ 0\leq s\leq n $, then $\dist _v(f^{n-s}(c'),J(p))>\delta$ for some uniform constant $\delta$. On the other hand $\dist _v(c,J(p))\leq C|\lambda|^{\frac{n}{l}}$, where $c$ is as in the definition of $E_n$ and $E'_n$. 	Thus $\dist _v(f^{n-s}(c'),c)>\delta'$ for some  uniform constant $\delta'$, this is impossible when $n$ large since $f^{n-s}(c')\in D_v(c,e^{-\alpha n})$. Thus it is sufficient to show that the cardinality  $\#\left\lbrace  0\leq s\leq n:\;f^s(\Gamma)\cap \Crit' \neq \emptyset\right\rbrace $ is uniformly bounded with respect to $n$. For this it is sufficient to show that $\#\left\lbrace  0\leq s\leq n:\;f^s(\Gamma)\cap \mathcal{C}\neq \emptyset\right\rbrace $ is uniformly bounded with respect to $n$ for every local component of critical variety $\mathcal{C}\subset \Crit' $. 
\medskip
\par Now there are two cases. Let $ \mathcal{V}=f(\mathcal{C}) $ be a critical value variety, and let $v$ be the unique intersection point of $ \mathcal{V} $ and $ L $, $ v\in CV(p)\cap J(p) $. In the first case we assume that $\mathcal{V}$ does not coincide with the stable manifold at $v$ as in Lemma \ref{escaping}. We claim that if $n$ is large, $s$ satisfies $s+1+j(s+1)+\beta (s+1)\leq n$ and $s\geq N_u$, then we have $f^s(\Gamma)\cap\mathcal{C}=\emptyset$. For otherwise if $ c'\in f^s(\Gamma)\cap\mathcal{C} $ then $v':=f(c')\in  f^{s+1}(\Gamma)\cap \mathcal{V}$. Then by Lemma \ref{escaping} we have
$f^{j(s+1)+\beta (s+1)}(v')\in W'$. Since $W'$ is forward invariant, when $n-s-1\geq j(s+1)+\beta (s+1)$ implies $f^{n-s-1}(v')\in W'$, thus $ f^{n-s}(c')\in W' $. By the definition of $\Gamma$ we also have  $ \dist_v(f^{n-s}(c'), \mathcal{C})\leq e^{-\alpha n} $, which is a contradiction when $n$ large. To summarize,  there exist a uniform constant $0<\theta<1$ such that $f^s(\Gamma)\cap \mathcal{C}\neq \emptyset$ implies $s\geq \theta n$ or $s\leq N_u$. 
\medskip
\par We need the following Lemma.
\begin{lemma}\label{useful}
There is a constant $N=N(\theta)>0$ such that
\begin{equation*}
  \#\left\lbrace  \theta n\leq s\leq n:\;f^s(\Gamma)\cap \C \neq \emptyset\right\rbrace\leq N .  
\end{equation*}
\end{lemma}
\begin{proof}
We first show that the cardinality $\#\left\lbrace  (1-\kappa)n< s\leq n:\;f^s(\Gamma)\cap \mathcal{C}\neq \emptyset\right\rbrace $ is uniformly bounded with respect to $n$, where $\kappa$ is the constant defined by,
\begin{equation*}
\kappa=\min\left( \frac{-\theta\log|\lambda|}{4l\log M},\frac{1}{2}\right) \;\text{ with } \;M=\sup_{x\in\Omega} \left( \left| \frac{\partial f}{\partial z}\right| ,\;\left| \frac{\partial f}{\partial t}\right|\right) .
\end{equation*} 
Note that by the definition of $\kappa$
\begin{equation}\label{kappa}
M^{\kappa}\leq |\lambda|^{\frac{-\theta}{4l}}.
\end{equation}
\par Assume $s_1<s_2$ satisfy $f^{s_i}(\Gamma)\cap \mathcal{C}\neq\emptyset$ and $ \left( 1-\kappa\right) n< s_i\leq n$, $i=1,2$.  Let $c_i\in f^{s_i}(\Gamma)\cap \mathcal{C}$, $i=1,2$. Let $c_0=\mathcal{C}\cap L$ be the unique intersection point. Then we have
\begin{equation}\label{0}
\dist _v(f^{n-s_2}(c_2),c_0)\leq \dist _v(f^{n-s_2}(c_2),\mathcal{C}\cap L_{\lambda^{n-s_2}u})+\dist _v(\mathcal{C}\cap L_{\lambda^{n-s_2}u},c_0)\leq e^{-n\alpha}+C|\lambda|^{\frac{n}{l}}.
\end{equation}
Similarly
\begin{equation*}
\dist _v(f^{n-s_1}(c_1),c_0)\leq \dist _v(f^{n-s_1}(c_1),\mathcal{C}\cap L_{\lambda^{n-s_1}u})+\dist _v(\mathcal{C}\cap L_{\lambda^{n-s_1}u},c_0)\leq e^{-n\alpha}+C|\lambda|^{\frac{n}{l}}.
\end{equation*}
\medskip
\par By the definition of $\theta$ we also have $\dist _v(c_1,c_2)\leq C|\lambda|^{\frac{n\theta}{l}}$. Thus by (\ref{kappa}) we have  
\begin{align*}
\dist _v(f^{n-s_2}(c_1),f^{n-s_2}(c_2))\leq M^{n-s_2} C|\lambda|^{\frac{n\theta}{l}}\leq C|\lambda|^{\frac{3n\theta}{4l}}.\;(\text{By the choice of}\; s_2 ).
\end{align*} 

\par Let $ y=\pi_2(f^{n-s_2}(c_2)) $. Using (\ref{kappa}) again we have 
\begin{equation*}
\dist _v(f^{n-s_1}(c_1),p^{s_2-s_1}(y))\leq M^{s_2-s_1}\dist _v(f^{n-s_2}(c_1),f^{n-s_2}(c_2))\leq C|\lambda|^{\frac{n\theta}{2l}}.
\end{equation*}

\par Thus we have
\begin{equation}\label{s2-s1}
\dist(c_0,p^{s_2-s_1}(y))\leq \dist _v(f^{n-s_1}(c_1),c_0)+ \dist _v(f^{n-s_1}(c_1),p^{s_2-s_1}(y))\leq e^{-n\alpha}+C|\lambda|^{\frac{n\theta}{2l}}.
\end{equation}
\par Combining (\ref{0}) and (\ref{s2-s1}) we infer that $s_2-s_1$ is a return time of $y$ in the small disk $ D(c_0, e^{-n\alpha}+C|\lambda|^{\frac{n\theta}{2l}}) $, by Przytycki's lemma (Lemma \ref{Przytycki}) there exist a constant $K(\alpha)$ such that $s_2-s_1\geq Kn$. Thus $\#\left\lbrace  (1-\kappa)n< s\leq n:\;f^s(\Gamma)\cap \mathcal{C}\neq \emptyset\right\rbrace\leq \frac{\kappa}{K(\alpha)}+1 $ .
\medskip
\par By Lemma \ref{Koebe} (2.3) there exist $ \alpha_1>0 $ such that  $\diam f^{(1-\kappa)n}(\Gamma)\leq e^{-\alpha_1n}$.
\medskip
\par Next if we consider the cardinality $\#\left\lbrace  (1-2\kappa)n< s\leq (1-\kappa)n:\;f^s(\Gamma)\cap \mathcal{C}\neq \emptyset\right\rbrace $, we replace $ f^n(\Gamma) $ by $ f^s(\Gamma) $ where $s$ satisfies $(1-2\kappa)n<s\leq (1-\kappa)n$, $ f^s(\Gamma)\cap \mathcal{C}\neq \emptyset $ and is maximal. Repeating the same argument we know there is a constant $ K(\alpha_1)>0 $ such that $\#\left\lbrace  (1-2\kappa)n< s\leq (1-\kappa)n:\;f^s(\Gamma)\cap \mathcal{C}\neq \emptyset\right\rbrace\leq \frac{\kappa}{K(\alpha_1)}+1 $. After finitely many iteration of the argument we get $\#\left\lbrace  \theta n< s\leq n:\;f^s(\Gamma)\cap \mathcal{C}\neq \emptyset\right\rbrace $ is uniformly bounded with respect to $n$.
\end{proof} 
\medskip
\par We continue the proof of Theorem \ref{slow approach}.  Since we have $\#\left\lbrace  0\leq s\leq \theta n:\;f^s(\Gamma)\cap \mathcal{C}\neq \emptyset\right\rbrace\leq N_u $. Thus by Lemma \ref{useful} we get that $\#\left\lbrace  0\leq s\leq n:\;f^s(\Gamma)\cap \C \neq \emptyset\right\rbrace $ is uniformly bounded with respect to $n$.
\medskip
\par In the second case we assume that $\mathcal{V}=f(\mathcal{C})$  coincides with the stable manifold at $v$ as in Lemma \ref{degenerate}. We also want to show that  $\#\left\lbrace  0\leq s\leq n:\;f^s(\Gamma)\cap \C \neq \emptyset\right\rbrace $ is uniformly bounded with respect to $n$. Let as before $\gamma$ be the parametrization of the stable manifold. Assume $0\leq s_1<s_2\leq n$ satisfy $f^{s_i}(\Gamma)\cap \mathcal{C}\neq\emptyset$, let $c_i\in f^{s_i}(\Gamma)\cap \mathcal{C}$, $i=1,2$. Let $f(c_1)=\gamma(u_1)$, then $f(c_2)=\gamma(\lambda^{s_2-s_1}u_1)$. By the definition of $\Gamma$  we have $\dist _v(f^{n-s_1}(\gamma(u_1)),\mathcal{C}\cap L_{\lambda^{n-s_1}u_1})\leq e^{-\alpha n} $, and $ \dist _v(f^{n-s_2}(\gamma(\lambda^{s_2-s_1}u_1),\mathcal{C}\cap L_{\lambda^{n-s_1}u_1})\leq e^{-\alpha n} $. By Lemma \ref{degenerate} we have $s_2-s_1\geq Kn$. Thus  $\#\left\lbrace  0\leq s\leq n:\;f^s(\Gamma)\cap \C \neq \emptyset\right\rbrace\leq 1/K+1 $ which is uniformly bounded.
\medskip
\par  {\em Step 2,} By Step 1 we already know that $f^n:\Gamma\to D_v(c,e^{-\alpha n})$ has uniformly bounded degree. Now we show that the conclusion of the theorem holds.
\medskip
\par Let $ \Gamma' $ be the component of $ f^{-n}(D_v(c,e^{-2\alpha n})) $ contained in $ \Gamma $. By Lemma \ref{Koebe} (2.4) there exist a constant $ \alpha'>0 $ such that $	\meas \;\Gamma'/\meas \;\Gamma\leq e^{-\alpha'n}$. Since $ \infty $ is an attracting fixed point, the set $ E_n $ is uniformly bounded. Thus $ \meas E_n<A $ for some constant $ A>0 $. Thus we have
\begin{equation*}
\frac{\meas \;E'_n}{\meas \;E_n}=\frac{\sum\meas \;\Gamma'}{\sum\meas \;\Gamma}\leq e^{-\alpha'n},
\end{equation*}
where the sum ranges over all possible critical points and connected components.
\medskip
\par  Finally we have shown that 
$ \meas \;E'_n\leq Ae^{-\alpha'n} $. Thus $ \sum_{i=0}^n  \meas \;E'_i<\infty $, and by the Borel-Cantelli lemma for Lebesgue a.e. point in $x\in L_u$ , $ x\notin E'_n $ for large $n$, which means $ \dist _v(f^n(x),\Crit' )\geq e^{-2\alpha n} $. In other words, $x$ slowly approach Crit'. The conclusion follows.
\end{proof}
\medskip

	\section{Positive vertical Lyapunov exponent}
	In this section we prove Theorem 1.4: if $x$ slowly approach Crit' and $\omega(x)\subset J(p)$ then  $\chi_{-}(x)\geq \log \mu_{Exp}$, where $ \mu_{Exp}>1 $ is the constant appearing in the definition of the TCE condition.
\begin{definition}
Let $x\in\Delta\times\mathbb{C}$ satisfies $\dist _v(f^n(x),J(p))\leq \frac{r}{4}$ for every $n\geq 0$, where $r$ is as in the definition of TCE condition. We say a positive integer $n$ is a {\em expanding time} of $x$ if for every $0\leq m\leq n$, the connected component $\Gamma$ of $ f^{-m}(D_v(f^n(x),\frac{r}{4})) $ containing $f^{n-m}(x)$, satisfies $\diam (\Gamma)\leq \mu_{Exp }^{-m}$.
\end{definition}	
\begin{lemma}\label{estimate-exp time}
There exist a uniform constant $\theta>0$ such that if $x\in\Delta\times\mathbb{C}$ satisfies  $\dist _v(f^n(x),J(p))\leq \frac{r}{4}$, then every $n\leq -\theta\log |\pi_1(x)|$ is an expanding time, provided $|\pi_1(x)|$ is small enough.
\end{lemma}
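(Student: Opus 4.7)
The strategy is to compare the action of $f^m$ on the vertical fibre $\{t=\lambda^{n-m}t_0\}$ with that of $p^m$ on the invariant line $L$, and then transfer the TCE diameter bound from $p$ to the two-dimensional pullback. Write $x=(t_0,z_0)$; for $0\le m\le n$ set $z_{n-m}:=\pi_2(f^{n-m}(x))$, and identify each vertical fibre with $\mathbb{C}$ via the $z$-coordinate. Under this identification, $f^m$ sends $\{t=\lambda^{n-m}t_0\}$ to $\{t=\lambda^n t_0\}$ through the polynomial
\begin{equation*}
\phi_m := h(\lambda^{n-1}t_0,\cdot)\circ\cdots\circ h(\lambda^{n-m}t_0,\cdot),
\end{equation*}
which is to be compared with $p^m=h(0,\cdot)^m$.

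The heart of the proof is a telescoping estimate. Fix a closed disk $K\subset\mathbb{C}$ containing a neighborhood of the filled Julia set of $p$, set $M:=\sup_{\Delta\times K}|\partial h/\partial t|$ and $L:=\sup_K|p'|$, so that $|h(s,z)-p(z)|\le M|s|$ on $\Delta\times K$ and $p$ is $L$-Lipschitz on $K$. An induction on $m$ using these two inequalities yields
\begin{equation*}
|\phi_m(z)-p^m(z)|\le C_1|t_0|\,L^m
\end{equation*}
for every $z\in K$ whose first $m$ iterates under both $\phi_j$ and $p^j$ stay in $K$, where $C_1$ depends only on $M$ and $L$. Choose $\theta>0$ with $\theta\log L<1$ (any $\theta>0$ if $L\le 1$). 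Then for $n\le-\theta\log|t_0|$ and every $m\le n$ one has $L^m\le L^n\le|t_0|^{-\theta\log L}$, hence $C_1|t_0|L^m\le C_1|t_0|^{1-\theta\log L}$, which is $\le r/8$ once $|t_0|$ is small enough.

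The conclusion then follows from an inclusion argument and TCE. Pick $y\in J(p)$ with $|z_n-y|\le r/4$, so that $D(z_n,3r/8)\subset D(y,r)$. For any $z\in\Gamma_m$, $\phi_m(z)\in D(z_n,r/4)$ by definition, and the estimate above gives
\begin{equation*}
|p^m(z)-z_n|\le |p^m(z)-\phi_m(z)|+|\phi_m(z)-z_n|\le r/8+r/4=3r/8,
\end{equation*}
so $z\in p^{-m}(D(y,r))$. Since $\Gamma_m$ is connected and contains $z_{n-m}$ (which belongs to the same pullback by the same reasoning applied at $z_{n-m}$), it sits inside the component $W_m$ of $p^{-m}(D(y,r))$ through $z_{n-m}$. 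TCE applied to $p$ forces $\diam(W_m)\le\mu_{Exp}^{-m}$, whence $\diam(\Gamma_m)\le\mu_{Exp}^{-m}$, showing that $n$ is an expanding time.

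The main obstacle will be arranging that the telescoping estimate is valid on a region containing every $\Gamma_m$ together with all intermediate iterates of its points under $\phi_j$ and $p^j$. This should be handled by an a priori choice of $K$ large enough that the one-dimensional pullbacks $W_m$ sit well inside it (they live in a fixed neighborhood of $J(p)$ by TCE), combined with a bootstrap argument: once $|\phi_j-p^j|$ is small on the current pullback, the corresponding $2$D iterates stay close to the $1$D ones and therefore inside $K$ over the whole time window $m\le n\le-\theta\log|t_0|$.
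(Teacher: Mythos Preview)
Your proposal is correct and follows essentially the same approach as the paper: a shadowing estimate of the form $|\phi_m(z)-p^m(z)|\le C|t_0|M^m$ (the paper phrases it as $\dist_v(p^m(y_i),f^m(x_i))\le M^m|\pi_1(x)|$) followed by the inclusion of $\Gamma_m$ into a component of $p^{-m}(D(y,r))$ and an appeal to TCE for $p$. The obstacle you flag about keeping intermediate iterates inside $K$ is handled in the paper implicitly by taking the Lipschitz constant $M$ as the supremum of both partials of $h$ over a compact $\Omega$ whose complement is the basin of $\infty$, so that all $f$-iterates of points in $\Gamma$ automatically remain in $\Omega$ (otherwise $f^m$ could not land back near $J(p)$); the $p$-iterates are then controlled by exactly the bootstrap you sketch.
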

\begin{proof}
Let $n$ be an arbitrary integer, and for $0\leq m\leq n$  let $\Gamma$ be the connected component  of $ f^{-m}(D_v(f^{n}(x),\frac{r}{4})) $ containing $f^{n-m}(x)$. Let $M=\sup_{x\in\Omega} \left( \left| \frac{\partial f}{\partial z}\right| ,\;\left| \frac{\partial f}{\partial  t}\right|\right)$ as before. Then for arbitrary $x_1\neq x_2\in \Gamma$, let $y_1=\pi_2(x_1)$, $y_2=\pi_2(x_2)$. Then for $i=1,2$ we have
\begin{equation*}
\dist _v(p^m(y_i),f^{m}(x_i))\leq M^m|\pi_1(x_i)|\leq M^m|\pi_1(x)|.
\end{equation*}

\par Let $z\in J(p)$ such that $\dist _v(z,f^n(x))\leq \frac{r}{4}$, If $n$ satisfies $M^n|\pi_1(x)|\leq \frac{r}{4}$, then we have
\begin{align*}
\dist (p^m(y_i),z)&\leq \dist _v(p^m(y_i),f^{m}(x_i))+\dist _v(f^m(x_i),z)\\&\leq \dist _v(p^m(y_i),f^{m}(x_i))+\dist _v(f^n(x),f^{m}(x_i))+\dist _v(z,f^n(x))\\&\leq \frac{r}{4}+\frac{r}{4}+\frac{r}{4}<r.\;\;\;\;\;\;\;\;\;\;\;\;\;\;\text{for}\;i=1,2.
\end{align*}
\par Thus $p^m(y_i)$ is in the disk $ D(z,r) $, $i=1,2$. By TCE we know for every connected component $\Gamma'$ of $ f^{-m} D(z,r) $, we have $\diam (\Gamma')\leq \mu_{Exp}^{-m}$, thus $\dist (y_1,y_2)\leq \mu_{Exp}^{-m}$. In other words, $\dist _v(x_1,x_2)\leq \mu_{Exp}^{-m}$. Thus $ \diam (\Gamma)\leq \mu_{Exp}^{-m} $, which implies $n$ is an exponential expanding time of $x$ providing $M^n|\pi_1(x)|\leq \frac{r}{4}$. Thus for any $0< \theta<\frac{1}{\log M} $, the condition $n\leq -\theta\log |\pi_1(x)|$ implies $n$ is an expanding time.
\end{proof}
\medskip

\par The main result of this section is the following.
		\begin{theorem}\label{Positive}
			If $ x\in W^s(J(p)) $ slowly approach Crit', then $ \chi_{-}(x)\geq \log \mu_{Exp} $.
		\end{theorem}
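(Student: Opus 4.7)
The plan is to partition the orbit segment $[0,n]$ into $O(\log n)$ consecutive blocks, apply Lemma~\ref{estimate-exp time} on each block to produce an expanding time, and then combine the resulting diameter bound with a bounded-degree version of the Koebe distortion from Lemma~\ref{Koebe} to produce a per-block vertical-derivative bound $|Df^{\ell_i}|_{y_i}(v)|\geq C\mu_{Exp}^{\ell_i}$. Since there are only $O(\log n)$ blocks, the multiplicative losses assemble to a polynomial-in-$n$ factor, which is harmless after dividing by $n$ and taking $\liminf$.

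To set up the blocks, choose $k_0$ large enough that for all $k\geq k_0$ both $\dist_v(f^k(x),J(p))\leq r/4$ (which is possible because $x\in W^s(J(p))$) and the slow approach bound $\dist_v(f^k(x),\Crit')\geq e^{-\alpha k}$ (for a fixed small $\alpha>0$) are in force. Inductively set $k_{i+1}=k_i+\ell_i$ with $\ell_i:=\lfloor -\theta\log|\pi_1(f^{k_i}(x))|\rfloor$; Lemma~\ref{estimate-exp time} then makes $\ell_i$ an expanding time for $y_i:=f^{k_i}(x)$. Since $|\pi_1(f^{k_i}(x))|=|\lambda|^{k_i}|\pi_1(x)|$, one has $\ell_i\approx\theta k_i\log(1/|\lambda|)$, so the $k_i$ grow geometrically with ratio $1+\theta\log(1/|\lambda|)$, and after $I=O(\log n)$ steps one reaches $n$ (shortening the last block so that $k_I=n$). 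Let $\Gamma_i$ denote the component of $f^{-\ell_i}(D_v(f^{k_{i+1}}(x),r/4))$ containing $y_i$; the expanding-time property then gives $\diam(f^j(\Gamma_i))\leq \mu_{Exp}^{-(\ell_i-j)}$ for every $0\leq j\leq \ell_i$.

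The hard part is to bound, uniformly in $i$ and $n$, the degree of the proper holomorphic map $f^{\ell_i}\colon \Gamma_i\to D_v(f^{k_{i+1}}(x),r/4)$, i.e., the number of indices $j\in[0,\ell_i)$ with $f^j(\Gamma_i)\cap\Crit'\neq\emptyset$. A direct comparison of the diameter estimate with the slow-approach estimate shows that such indices must be clustered near $\ell_i$, but still leaves up to $O(\alpha k_{i+1})$ of them, which is not bounded in $n$. To remove this defect one telescopes as in the proof of Lemma~\ref{useful}: if two such indices $j_1<j_2$ meet a common critical variety $\mathcal{C}$, the exponentially small diameter of $f^{j_1}(\Gamma_i)$, together with the vertical tangency estimate $\dist_v(\mathcal{C}\cap L_t,\mathcal{C}\cap L)\leq C|t|^{1/l}$ and the contraction of the horizontal coordinate, forces the orbit $p^{s}(\pi_2(f^{k_i+j_1}(x)))$ at time $s=j_2-j_1$ to return very close to $c_0:=\mathcal{C}\cap L$; Przytycki's Lemma~\ref{Przytycki} then gives a uniform lower bound on $j_2-j_1$ in terms of the ambient diameter scale, and iterating over $O(1)$ dyadic scales of $[0,\ell_i]$ yields a uniform degree bound $D$.

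With $D$ uniformly bounded, the inequalities (2.1)--(2.2) of Lemma~\ref{Koebe} yield $|Df^{\ell_i}|_{y_i}(v)|\geq C\mu_{Exp}^{\ell_i}$ for a uniform $C>0$. By the vertical chain rule for the skew product,
\begin{equation*}
|Df^n|_x(v)|\;=\;|Df^{k_0}|_x(v)|\cdot\prod_{i=0}^{I-1}|Df^{\ell_i}|_{y_i}(v)|\;\geq\;|Df^{k_0}|_x(v)|\cdot C^{I}\mu_{Exp}^{n-k_0},
\end{equation*}
and since $I=O(\log n)$ while $|Df^{k_0}|_x(v)|$ is a fixed positive constant, the right-hand side is $n^{O(1)}\mu_{Exp}^n$. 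Dividing by $n$ and taking $\liminf$ yields $\chi_{-}(x)\geq\log\mu_{Exp}$, as required.
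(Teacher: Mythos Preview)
Your overall architecture---an $O(\log n)$ block decomposition driven by the expanding times of Lemma~\ref{estimate-exp time}, a Koebe-type derivative estimate on each block, and then chaining---matches the paper's. The decisive difference is the radius of the disk you pull back. The paper pulls back $D_v(f^n(x),e^{-\delta n})$ with $\delta$ proportional to the slow-approach exponent $\alpha$; you pull back the fixed-radius disk $D_v(f^{k_{i+1}}(x),r/4)$. This is not cosmetic: the tiny radius is precisely what forces \emph{univalence} and makes a derivative \emph{lower} bound available via the classical Koebe $\tfrac14$-theorem. Your two substitutes for this---a uniform degree bound and a multivalent Koebe lower bound---both fail.

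\textbf{The degree bound does not follow.} The Przytycki/Lemma~\ref{useful} argument you invoke relies essentially on the target $f^n(\Gamma)=D_v(c,e^{-\alpha n})$ being exponentially small and centered near a point of $\Crit'$; this is what produces the estimate $\dist_v(f^{n-s_2}(c_2),c_0)\le e^{-\alpha n}+C|\lambda|^{n/l}$ in that proof. In your setting $f^{\ell_i}(\Gamma_i)=D_v(f^{k_{i+1}}(x),r/4)$ has neither property. If $c_1,c_2\in\mathcal C$ are hits at times $j_1<j_2$, the bound on $\dist\!\big(p^{\,j_2-j_1}(\pi_2(c_1)),c_0\big)$ unavoidably contains the term $\diam f^{j_2}(\Gamma_i)\le\mu_{Exp}^{-(\ell_i-j_2)}$, which is $O(1)$ when $j_2$ is close to $\ell_i$. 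Przytycki then only yields $j_2-j_1\gtrsim \ell_i-j_2$, so the gaps $\ell_i-j_m$ decay at best geometrically and one gets $O(\log n)$ hits, not $O(1)$. There are no ``$O(1)$ dyadic scales'' here.

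\textbf{Even with bounded degree, (2.1)--(2.2) give no derivative lower bound.} Inequality (2.1) of Lemma~\ref{Koebe} is an \emph{upper} bound $|F'(y)|\,\diam W'\le C_1\delta$, and (2.2) controls only the shape of $W'$. For a proper degree-$D$ map the derivative at a fixed preimage of the center can be arbitrarily small when another preimage is nearby (e.g.\ $F(z)=z(z-\varepsilon)$ on the unit disk has $F'(0)=-\varepsilon$). So the step ``(2.1)--(2.2) yield $|Df^{\ell_i}|_{y_i}(v)|\ge C\mu_{Exp}^{\ell_i}$'' is unjustified. The paper avoids this entirely: with radius $e^{-\delta n}$ every intermediate pullback has diameter $\le e^{-\alpha n}$, hence misses $\Crit'$ by slow approach, the block map is univalent, and the standard Koebe $\tfrac14$-theorem gives $|Df^{\text{block}}|\ge C\,e^{-\delta n}\mu_{Exp}^{\text{block}}$. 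The accumulated penalties $e^{-\delta n}$ sum to $e^{-\delta n/\theta'}$ and disappear in the limit $\alpha\to 0$.

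Your forward block scheme can be salvaged by adopting this shrinking-radius trick; with the fixed radius $r/4$ it does not close.
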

\begin{proof}
Without loss of generality we may assume that 	$x$ satisfies  $\dist _v(f^n(x),J(p))\leq \frac{r}{4}$ for every $n\geq 0$. For fixed sufficiently small $\alpha>0$, there exist $N>0$ such that for $n\geq N$, $ \dist _v(f^n(x),\Crit' )\geq e^{-\alpha n} $ by slow approach. Since $x$ is in $W^s(J(p))$ the orbit of $x$ will stay away from any component of the critical variety which does not belongs to Crit'. Thus we have $ \dist _v(f^n(x),\C )\geq e^{-\alpha n} $ as well.  We may also assume that $|\pi_1(x)|<1$.
\medskip

\par Set $ \delta:=\frac{2\alpha\log M}{\log\mu_{Exp}} $, where $M =\sup_{x\in\Omega} \left( \left| \frac{\partial f}{\partial z}\right| ,\;\left| \frac{\partial f}{\partial t}\right|\right)$. Let $0<\theta'<1$ be a constant that will be determined later. For $(1-\theta')n<s\leq n$, let $ \Gamma_s $ be the connected component of $ f^{s-n}D_v(f^n(x),e^{-\delta n}) $ containing $f^s(x)$. Set $ \delta_0=\frac{\delta}{2\log M} $, and note that for all large $n$, $\frac{r}{4}M^{- \delta_0 n }\geq e^{-\delta n}$. In particular
\begin{equation*}
f^{ \delta_0 n }(D_v(f^n(x),e^{-\delta n}))\subset D_v(f^{n+ \delta_0 n }(x),r/4), 
\end{equation*} hence
\begin{equation*}
D_v(f^n(x),e^{-\delta n})\subset \Comp _{f^n(x)}f^{- \delta_0 n }(D_v(f^{n+ \delta_0 n }(x),\frac{r}{4})).
\end{equation*}  Here $ \Comp _y $ denotes the connected component containing $y$.
\medskip
\par We claim that there exist $0<\theta'<1$ such that for large $n$, $\theta'n+ \delta_0 n $ is an expanding time of $f^{n-\theta'n} (x)$. Indeed by Lemma \ref{estimate-exp time}, for every $m\leq n$ large, $ -m\theta\log|\lambda| $ is an expanding time of $f^m(x)$. Provided $\alpha$ is sufficiently small we have $\delta_0$ is sufficiently small as well, thus $\theta'=\frac{\theta\log|\lambda|+\delta_0}{\theta\log|\lambda|-1}$ is a positive number.  We conclude that $\theta'n+ \delta_0 n $ is an expanding time of $f^{n-\theta'n} (x)$.
 Thus for every $n-\theta'n<s\leq n$, we have
\begin{align*}
\diamComp_{f^s(x)}f^{s-n}(D_v(f^n(x),e^{-\delta n}))&\leq \diamComp_{f^s(x)}f^{s-(n+ \delta_0 n )}(x)(D_v(f^{n+ \delta_0 n }(x),\frac{r}{4}))\\&\leq \mu_{Exp}^{s-(n+ \delta_0 n )}\leq \mu_{Exp}^{- \delta_0 n }=e^{-\alpha n}.\;(\text{By the choice of}  \;\delta_0.)
\end{align*}
 Thus from $ \dist _v(f^s(x),\C )\geq e^{-\alpha s} $ we get that
\begin{equation*}
\Comp_{f^s(x)}f^{s-n}(D_v(f^n(x),e^{-\delta n}))\cap\C =\emptyset.
\end{equation*}
 This implies that $ f^{\theta' n} $ restricted to $ \Comp_{f^{n-\theta'n+1}(x)}f^{-\theta'n+1}(D_v(f^n(x),e^{-\delta n})) $ is univalent. Since  $\theta'n$ is an expanding time of $f^{n-\theta'n} (x)$, we have
 \begin{equation}\label{5.1}
 \diamComp_{f^{n-\theta'n+1}(x)}f^{-\theta'n+1}(D_v(f^n(x),e^{-\delta n}))\leq \mu_{Exp}^{-\theta'n}.
 \end{equation}Thus by (\ref{5.1}) and Koebe distortion, there is a uniform constant $C>0$ such that
\begin{equation*}
\left| \left( Df^{\theta'n} \right) _{f^{n-\theta'n+1}(x)}(v)\right| \geq Ce^{-\delta n}\mu_{Exp}^{\theta'n},
\end{equation*}
where $v$ is the unit vertical vector.
\medskip
\par Next we replace $ f^n(x) $ by $ f^{n-\theta'n}(x) $, and repeat the argument above, we get an estimate
\begin{equation*}
\left|\left(  Df^{\theta'n_1}\right)  _{f^{n_1-\theta'n_1+1}(x)}(v)\right| \geq Ce^{-\delta n_1}\mu_{Exp}^{\theta'n_1},
\end{equation*}
where $n_1=n-\theta'n$.
\medskip
\par We define $ n_m:=n_{m-1}-\theta'n_{m-1} $, $m\geq 1$, we set $n_0=n$. We can repeat this procedure until for some $k$, $n-\sum_{i=0}^k n_i\theta'\leq N$. In this final time we can not define $n_k$ as $ n_k=n_{k-1}-\theta'n_{k-1} $, instead we choose the final $ n_k $  to satisfying $ N<n-\sum_{i=0}^k n_i\theta'\leq 2N$. Combining these estimates, take the product of this derivatives, we have
\begin{align*}
\left| \left( Df^n\right) _{x}(v)\right| &\geq \varepsilon_1 \left| \left( Df^{\theta'n_k} \right) _{f^{n_k-\theta'n_k+1}(x)}(v)\right|\cdots \left| \left( Df^{\theta'n} \right) _{f^{n-\theta'n+1}(x)}(v)\right| \\&\geq \varepsilon_1 Ce^{-\delta n_k}\mu_{Exp}^{\theta'n_k}\cdots Ce^{-\delta n}\mu_{Exp}^{\theta'n}\\&\geq \varepsilon_1 C^{k+1}e^{-\delta(n-2N)/\theta'}\mu_{Exp}^{n-2N},
\end{align*}
where we take $ \varepsilon_1=\min_{0\leq j\leq 2N }|Df^j|_{x}(v)| $.
\medskip
\par It is not hard to give an upper bound of $k$. Indeed we let $S_m:=n-\sum_{i=0}^m n_i\theta'$, then $ S_m $ satisfies $S_m=(1-\theta')S_{m-1}$, for $1\leq m\leq k$. Thus we get $S_k=(1-\theta')^{k+1}n$. Now $S_k>N$ implies $k<\frac{\log N- \log n}{\log (1-\theta')}-1$. Thus $ C^{k+1} $ is a sub-exponentially large term with respect to $n$.
\medskip
\par Taking the limit in the above inequality we get 
\begin{equation*}
\chi_{-}(x)=\liminf _{n\to\infty}\frac{1}{n} \log|Df^n|_{x}(v)|\geq \log\mu_{Exp}-\frac{\delta}{\theta'}.
\end{equation*}
Letting $ \alpha\to 0 $ then  $\delta/\theta'\to 0$ as well, and we get $\chi_{-}(x)\geq \log \mu_{Exp}$. 
\end{proof}
\medskip
\begin{corollary}
There are no wandering Fatou components in $\Delta\times\mathbb{C}$, $W^s(J(p))=J(f)$, and Fatou set $F(f)$ is equal to the union of basins of attracting cycles. Moreover  Lebesgue a.e. point $x\in J(f)$ slowly approach Crit' and $\chi_{-}(x)\geq \log \mu_{Exp}$.
\end{corollary}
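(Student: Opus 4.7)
The plan is to combine Theorems 1.2 and 5.2 with the standard observation that a Fatou point cannot carry a positive lower Lyapunov exponent. First I would apply Theorem 1.2 to obtain that Lebesgue almost every $x \in \Delta \times \mathbb{C}$ slowly approaches $\Crit'$, and in particular this holds for Lebesgue a.e.\ $x \in W^s(J(p))$. Theorem 5.2 then yields $\chi_{-}(x) \geq \log \mu_{Exp} > 0$ for Lebesgue a.e.\ $x \in W^s(J(p))$.

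I would next observe that $\chi_{-}(x) \leq 0$ on the Fatou set. Indeed, if $x$ lies in a Fatou component $U$ then $(f^n)$ is normal on a small bidisk neighborhood of $x$ in $U$; applying the Cauchy estimates to any locally uniformly convergent subsequence bounds $\|Df^{n_k}\|$ uniformly, forcing $|Df^{n_k}|_x(v)| = O(1)$ and hence $\chi_{-}(x) \leq 0$. Combined with the previous paragraph, this forces $F(f) \cap W^s(J(p))$ to have Lebesgue measure zero. Recall that, under the TCE hypothesis on $p$, one has the decomposition $W^s(J(p)) = (\text{union of wandering Fatou components in the basin of } L) \cup J(f)$ (noted just before Theorem 1.4). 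Any wandering Fatou component would be a nonempty open subset of $F(f) \cap W^s(J(p))$, contradicting the measure-zero conclusion. Therefore no wandering Fatou components exist, and $W^s(J(p)) = J(f)$.

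With wandering components ruled out, every Fatou component in $\Delta \times \mathbb{C}$ is pre-periodic, and each periodic component is contained in the two-dimensional extension of an attracting basin of $p$ on $L$ (by Proposition 2.8, TCE forces $F(p)$ to be a union of attracting basins) or of the basin of $\infty$; either way it lies in the basin of an attracting cycle of $f$. This yields the first three assertions. The final statement follows immediately: combining $W^s(J(p)) = J(f)$ with Theorems 1.2 and 5.2 shows that Lebesgue a.e.\ $x \in J(f)$ slowly approaches $\Crit'$ and satisfies $\chi_{-}(x) \geq \log \mu_{Exp}$. The only substantive step is the normal-family argument ruling out positive Lyapunov exponents on $F(f)$; everything else is bookkeeping, so I do not anticipate any serious obstacle.
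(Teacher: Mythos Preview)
Your proposal is correct and follows essentially the same route as the paper: combine Theorem~1.2 (Theorem~4.4) with Theorem~5.3 to get $\chi_{-}(x)\geq\log\mu_{Exp}$ for a.e.\ $x\in W^s(J(p))$, observe that Fatou points cannot have positive vertical Lyapunov exponent, and conclude via the decomposition $W^s(J(p))=J(f)\cup(\text{wandering components})$. The only differences are cosmetic: you spell out the normal-family/Cauchy-estimate justification that the paper leaves as ``clear'', and you phrase the identification of $F(f)$ with attracting basins via pre-periodicity rather than the paper's direct observation that $x\notin W^s(J(p))$ forces the orbit into a bulged attracting basin of $p$ (note your reference to ``Theorem~5.2'' should be Theorem~5.3).
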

\begin{proof}
By Theorem 4.4 and Theorem 5.3, for Lebesgue a.e. point $x\in W^s(J(p))$, $x$ slowly approach Crit' and $\chi_{-}(x)\geq \log \mu_{Exp}$. We also know $W^s(J(p))$ is the union of $J(f)$ and the wandering Fatou components.

\par It is clear that points in the Fatou set can not have a positive vertical Lyapunov exponent, thus there are no wandering Fatou component in $\Delta\times\mathbb{C}$, and $W^s(J(p))=J(f)$. Since every attracting basin of $p$ bulges to an attracting basin of $f$, for every point $x$ such that $x\notin W^s(J(p))$ we get that $x$ is in a basin of attracting cycle. Thus the Fatou set is the union of  basins of attracting cycles.
\end{proof}
\medskip
	\section{The Julia set $J(f)$ has Lebesgue measure zero}
	In this section we prove Theorem 1.5, thus finishing the proof of the main theorem. In the following we assume that $x\in J(f)$ is both slowly approaching Crit' and satisfies $\chi_{-}(x)\geq \log \mu_{Exp}$. We begin with a definition.
	\begin{definition}
		Let $1<\sigma<\log\mu_{Exp}$, Let $m$ be a positive integer. We say $m$ is a {\em $\sigma$-hyperbolic time} for $x$ if 
		\begin{equation*}
	\left| \left( Df^{m-i}\right) _{f^i(x)}(v)\right| \geq \sigma^{m-i}
		\end{equation*}
		holds for each $0\leq i\leq m-1$, and $v$ is the unit vertical vector.
	\end{definition}
We fix once for all $1<\sigma<\sigma'<\mu_{Exp}$.
	\medskip
	\par Since $\chi_{-}(x)\geq\log \mu_{Exp}$, the hyperbolic times have positive density by Pliss's Lemma \cite{pliss1972conjecture}, in the following sense:
	\begin{lemma}\label{Pliss}
		There is a constant $\theta>0$ such that if we consider the set 
		\begin{equation*}
		H_n=\left\lbrace m\in \left\lbrace1,\cdots, n\right\rbrace: m\;\text{is a}\;\sigma'\text{-}\text{hyperbolic time for}\; x \right\rbrace ,
		\end{equation*}
		then for large $n$ we have
		\begin{equation*}
		\frac{\# H_n}{n}>\theta.
		\end{equation*}
	\end{lemma}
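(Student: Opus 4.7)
The plan is to recognize this as a standard application of Pliss's lemma to the sequence of logarithmic vertical derivatives along the orbit of $x$, after checking that this sequence is bounded above.

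First, I would set $b_i := \log |Df|_{f^{i-1}(x)}(v)|$ for $i \geq 1$, where $v = (0,1)$. Because $f$ is a skew product, the vertical direction is invariant, so by the chain rule
\begin{equation*}
\log \bigl|\bigl(Df^{m-i}\bigr)_{f^i(x)}(v)\bigr| = \sum_{j=i+1}^{m} b_j.
\end{equation*}
Hence the condition defining a $\sigma'$-hyperbolic time is simply
\begin{equation*}
\sum_{j=i+1}^{m} b_j \geq (m-i)\log\sigma' \quad \text{for every } 0 \leq i \leq m-1.
\end{equation*}

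Next I would check that $b_i$ is uniformly bounded above. Since $x \in J(f)$ and by Corollary~5.4 (or, directly, by $x \in W^s(J(p))$ under our running assumptions), the forward orbit $\{f^n(x)\}_{n\geq 0}$ is contained in a compact subset of $\Delta \times \mathbb{C}$ (for instance, a small neighborhood of $J(p)$ after finitely many iterates). On this compact set $|\partial h/\partial z|$ is bounded, yielding a constant $H \geq \log\sigma'$ such that $b_i \leq H$ for all sufficiently large $i$; modifying finitely many terms does not affect density, so we may assume $b_i \leq H$ for all $i$.

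Now I would bring in the hypothesis $\chi_{-}(x) \geq \log \mu_{Exp}$. Fixing $\varepsilon > 0$ small enough that $\log\mu_{Exp} - \varepsilon > \log\sigma'$, the liminf assumption gives an integer $N_0$ such that for all $n \geq N_0$,
\begin{equation*}
\sum_{i=1}^{n} b_i = \log\bigl|Df^n|_{x}(v)\bigr| \geq n(\log\mu_{Exp} - \varepsilon).
\end{equation*}
I am now in position to apply Pliss's lemma (see e.g.\ \cite{pliss1972conjecture}) with upper bound $H$, average lower bound $c_2 := \log\mu_{Exp} - \varepsilon$, and threshold $c_1 := \log\sigma'$. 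Pliss gives a set $H_n \subset \{1, \ldots, n\}$ of indices $m$ satisfying exactly the $\sigma'$-hyperbolic time condition, of cardinality at least
\begin{equation*}
\#H_n \geq \frac{c_2 - c_1}{H - c_1}\, n = \frac{\log\mu_{Exp} - \log\sigma' - \varepsilon}{H - \log\sigma'}\, n.
\end{equation*}
Choosing $\theta$ strictly smaller than this positive constant gives the lemma for all large $n$.

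The only subtlety is the upper bound on $b_i$, which is the step to be careful about; once the orbit of $x$ is confined to a compact set (which follows from $x$ being in the stable set of $J(p)$, so that $\dist(f^n(x),J(p)) \to 0$ and the orbit cannot escape to infinity), the rest is a direct invocation of Pliss.
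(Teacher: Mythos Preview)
Your proposal is correct and is precisely the standard Pliss argument; the paper does not give its own proof but simply refers to \cite[Theorem~3.1]{levin2016lyapunov}, whose content is exactly the computation you wrote out, adapted here to the vertical derivative of the skew product. Your observation that the skew-product structure makes the vertical direction $Df$-invariant (so that the chain rule reduces to a scalar product and Pliss applies verbatim) is the only point specific to this setting, and you handled it correctly.
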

For the proof see \cite[Theorem 3.1]{levin2016lyapunov}.
\medskip
\par Next for a positive integer $n$ we define $\phi(f^n(x)):=-\log\dist _v(f^n(x),\Crit' )$. Multiplying the metric by a constant we can further assume $\phi$ is a positive function. We show that
\begin{lemma}\label{2-DPU}
There exists a constant $C=C(x)>0$ such that for every $n\geq 0$,
\begin{equation*}
\sum_{k=0}^{n-1}\phi(f^k(x))\leq Cn.
\end{equation*}
\end{lemma}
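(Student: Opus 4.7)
The plan is to adapt the one-dimensional DPU Lemma (Lemma \ref{DPU}) to the present skew-product setting. The key new ingredient will be a vertical two-dimensional analog of Przytycki's Lemma for the orbit of $x$; once this is proved, the combinatorial summation argument of DPU carries over, and the slow approach hypothesis on $x$ will absorb the ``$M$ exceptional terms'' allowed in the one-dimensional statement into a linear error.

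First I would establish a vertical 2D Przytycki estimate: for each local component $\mathcal{C}$ of $\Crit'$ with $c = \mathcal{C}\cap L$, there exist constants $C_1 = C_1(x) > 0$ and $\varepsilon_0 > 0$ such that for all integers $0 \le k_1 < k_2$ with $\dist_v(f^{k_i}(x), \mathcal{C}) \le \varepsilon < \varepsilon_0$ for $i = 1, 2$, one has $k_2 - k_1 \ge C_1 \log(1/\varepsilon)$. The idea is to compare the non-autonomous vertical orbit $z_k := \pi_2(f^k(x))$, which satisfies $z_{k+1} = h(\lambda^k t_0, z_k)$ with $t_0 = \pi_1(x)$, to a genuine $p$-orbit starting from $z_{k_1}$. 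Since $\sup_{|z|\le R}|h(\lambda^k t_0, z) - h(0, z)| \le C|\lambda|^k$ on any fixed bounded set, iterating $k_2 - k_1$ times yields
\begin{equation*}
|z_{k_2} - p^{k_2 - k_1}(z_{k_1})| \;\le\; C|\lambda|^{k_1} \frac{M^{k_2 - k_1} - 1}{M - 1},
\end{equation*}
where $M$ is a uniform vertical derivative bound. I would then split on whether $C|\lambda|^{k_1} M^{k_2 - k_1}$ is $\le \varepsilon$: in the ``small error'' case one has $p^{k_2-k_1}(z_{k_1}) \in D(c, O(\varepsilon))$ with $z_{k_1} \in D(c, O(\varepsilon))$, so the one-dimensional Przytycki Lemma (Lemma \ref{Przytycki}) applied to $p$ gives $k_2 - k_1 \ge C' \log(1/\varepsilon)$; in the ``large error'' case a direct estimate yields $k_2 - k_1 \gtrsim k_1 \log(1/|\lambda|)/\log M$, and the slow approach bound $\log(1/\varepsilon) \le \alpha k_1$, for $\alpha$ chosen small relative to $\log(1/|\lambda|)/\log M$, converts this into $k_2 - k_1 \ge C_1 \log(1/\varepsilon)$ as well.

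Second I would transcribe the combinatorial proof of Lemma \ref{DPU} to the sequence $\bigl(-\log \dist_v(f^k(x), \mathcal{C})\bigr)_{k=0}^{n-1}$ for each local component $\mathcal{C}$ of $\Crit'$, using the vertical 2D Przytycki estimate in place of the one-dimensional one. This produces, for each $\mathcal{C}$, a bound $\sum_{k} -\log \dist_v(f^k(x), \mathcal{C}) \le Q n$ after excluding at most one ``deepest approach'' index per component. Since $\phi = \max_{\mathcal{C}}\bigl(-\log \dist_v(\cdot, \mathcal{C})\bigr)$ and there are only finitely many components, summing these bounds and absorbing the at most $\#\Crit'$ excluded terms (each of which is at most $\alpha n$ by slow approach) yields $\sum_{k=0}^{n-1} \phi(f^k(x)) \le C n$ for a constant $C = C(x)$.

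The hard part will be the vertical 2D Przytycki estimate: the exponential error amplification $M^{k_2 - k_1}$ competes with the contraction $|\lambda|^{k_1}$, and the case analysis requires the slow-approach rate $\alpha$ to be taken small relative to $\log(1/|\lambda|)/\log M$, a constraint that is consistent with the choice of $\alpha$ already made in Section 5. Once this estimate is in place, the DPU-style summation step is a direct adaptation of the classical one-dimensional argument, and the slow approach cleanly converts the bounded number of exceptional terms into an $O(n)$ contribution, completing the proof.
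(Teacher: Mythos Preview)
Your approach is correct and genuinely different from the paper's. The paper does \emph{not} establish a two-dimensional Przytycki estimate; instead it decomposes the time interval geometrically into blocks $[\theta^{j+1}n,\theta^j n]$ with $\theta$ close to $1$, and on each block it applies the one-dimensional DPU lemma (Lemma~\ref{DPU}) as a black box to the genuine $p$-orbit of $z:=\pi_2(f^{\theta^j n}(x))$. The shadowing error on a single block is at most $K^{(1-\theta)\theta^j n}|\lambda|^{\theta^{j+1}n}$, which for $\theta$ close enough to $1$ is dominated by $e^{-Q(1-\theta)\theta^j n}$, so the non-exceptional terms satisfy $\phi(f^k(x))\le 2\phi(p^{k-\theta^j n}(z))$ and inherit the linear bound; the $M$ exceptional terms per block are absorbed via slow approach exactly as you do. Summing the geometric series over blocks gives $Cn$.

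What each route buys: the paper's block argument is shorter because it never reopens the DPU combinatorics---it only needs shadowing on intervals of length $(1-\theta)n$ with $\theta$ near $1$, so the error control is automatic and no case split is required. Your route invests more up front in the vertical Przytycki estimate (the case split on $|\lambda|^{k_1}M^{k_2-k_1}$ versus $\varepsilon$, together with the slow-approach bound $\log(1/\varepsilon)\le\alpha k_1$), but once that estimate is in hand you get a statement that is orbit-wise uniform and could be reused elsewhere; the DPU summation then runs verbatim. One small point to make explicit in your write-up: your 2D Przytycki bound only kicks in for $k_1\ge N_\alpha$, so you should take $\varepsilon_0<\min_{0\le k<N_\alpha}\dist_v(f^k(x),\Crit')$ to ensure no small-$k_1$ pairs arise, and absorb the first $N_\alpha$ terms of the sum into the constant $C(x)$.
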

\begin{proof}
Fix $\alpha>0$ small, by slow approach for large $n$ we have $\dist _v(f^n(x),\Crit' )\geq e^{-\alpha n}$, or in other words $\phi(f^n(x))\leq \alpha n$.
\medskip
\par We claim that there exist constant $0<\theta<1$  and $C_1>0$ such that such for large $n$ we have $\sum_{k=\theta n}^{n-1}\phi(f^k(x))\leq C_1n. $ To show this, let $ z=\pi_2(f^{\theta n}(x)) $. By Lemma \ref{DPU} we have
\begin{equation*}
\sum_{\begin{subarray}{c}j=0\\\text{except}\;M\;\text{terms}\end{subarray}}^{(1-\theta)n-1}\phi(p^j(z))\leq Q(1-\theta)n.
\end{equation*}
\par In particular $\phi(p^j(z))\leq Q(1-\theta)n$ holds for $j$ appearing in above sum. On the other hand there is a constant $ K>0 $ so that $\dist _v(p^j(z),f^{\theta n+j}(x))\leq K^{j}|\lambda|^{\theta n}$. We choose $\theta$ sufficiently close to $1$ so that $e^{Q(\theta-1)n}-K^{(1-\theta) n}|\lambda|^{\theta n}\geq e^{2Q(\theta-1)n}$. Thus we have
\begin{align*}
\dist _v(f^{\theta n+j}(x),\Crit' )&\geq \dist (p^j(z),\Crit' )-\dist _v(f^{\theta n+j}(x),p^j(z) )\\&\geq e^{-\phi(p^j(z))}-K^{(1-\theta) n}|\lambda|^{\theta n}\\&\geq e^{-\phi(p^j(z))}+e^{2Q(\theta-1)n}-e^{Q(\theta-1)n}\geq e^{-2\phi(p^j(z))},
\end{align*}
which implies $\phi(f^{\theta n+j}(x))\leq 2\phi(p^j(z))$. Then we get
\begin{equation*}
\sum_{\begin{subarray}{c}k=\theta n\\\text{except}\;M\;\text{terms}\end{subarray}}^{n-1}\phi(f^k(x))\leq 2Q(1-\theta)n.
\end{equation*}
Together with slow approach $\phi(f^n(x))\leq \alpha n$ we have
\begin{equation*}
\sum_{k=\theta n}^{n-1}\phi(f^k(x))\leq \left( 2Q(1-\theta)+M\alpha\right)n.
\end{equation*} 
Setting $ C_1:=\left( 2Q(1-\theta)+M\alpha\right) $ we get the conclusion.
\medskip
\par Repeat the above argument we get the estimate in the time $\theta^2 n$ to $\theta n$ we get 
\begin{equation*}
\sum_{k=\theta^2 n}^{\theta n-1}\phi(f^k(x))\leq C_1\theta n.
\end{equation*}
Keep repeating the above argument in the time $\theta^{j} n$ to $\theta^{ j-1}n$ until for some $j$, the slow approach property $\dist _v(f^{\theta^{j+1} n}(x),\Crit' )\geq e^{\theta^{j+1} n}$ does not holds. The final step is a bounded time, and the sum of $\phi(f^k(x))$ in this bounded time is bounded by a constant depending on $x$. Summing up there is a constant $C=C(x)>0$ such that
\begin{equation*}
\sum_{k=0}^{n-1}\phi(f^k(x))\leq Cn.
\end{equation*}
\end{proof}
\par We now introduce some notions from \cite[Theorem 3.1]{levin2016lyapunov}. Given $K>0$ we define the shadow $S(j,K)$ of a positive integer $j$ to be the following interval of the real line:
\begin{equation*}
S(j,K):=(j,j+K\phi(f^j(x))].
\end{equation*}
For a positive integer $N$, let $A(N,K)$ be the set of all positive integer $n$ such that at most $N$ integers $j$ satisfy $n\in S(j,K)$. The following lemma and Theorem are proved in \cite[Theorem 3.1]{levin2016lyapunov}, and both rely on the one-dimensional DPU lemma (Lemma \ref{DPU}). In our case we can replace the DPU lemma by Lemma \ref{2-DPU}, and get exactly the same statements.
\begin{lemma}\label{density}
	For any $N$ and $K$, for $n$ sufficiently large we have
	\begin{equation*}
	\frac{\left\lbrace A(N,K)\cap\left\lbrace 1,\cdots,n \right\rbrace \right\rbrace} {n}\geq 1-\frac{CK}{N+1}.
	\end{equation*}
\end{lemma}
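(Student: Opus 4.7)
The plan is a direct double counting (Fubini) argument, where the essential analytic input is Lemma \ref{2-DPU} and the rest is bookkeeping.

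First I would bound the total integer content of the shadow intervals. Since $S(j,K) = (j,\, j+K\phi(f^j(x))]$ is a half-open interval of length $K\phi(f^j(x))$, it contains at most $K\phi(f^j(x))$ positive integers. Summing over $j \in \{0,1,\ldots,n-1\}$ and invoking Lemma \ref{2-DPU}, I get
\[
\sum_{j=0}^{n-1} \#\bigl(S(j,K)\cap \mathbb{Z}_{>0}\bigr) \;\leq\; K\sum_{j=0}^{n-1}\phi(f^j(x)) \;\leq\; CKn,
\]
where $C=C(x)$ is the constant of Lemma \ref{2-DPU}.

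Next I would rewrite the same sum by swapping the order of summation. For each positive integer $m$, set $r(m) := \#\{j\geq 0 : m\in S(j,K)\}$. Since $S(j,K)\subset (j,\infty)$, only indices $j<m$ can contribute; in particular, for $m \leq n$ only $j \in \{0,\ldots,n-1\}$ contributes. Hence
\[
\sum_{m=1}^{n} r(m) \;\leq\; \sum_{j=0}^{n-1}\#\bigl(S(j,K)\cap \mathbb{Z}_{>0}\bigr) \;\leq\; CKn.
\]
By the very definition of $A(N,K)$, an integer $m \in \{1,\ldots,n\}$ fails to lie in $A(N,K)$ if and only if $r(m)\geq N+1$. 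Therefore
\[
(N+1)\cdot \#\bigl(\{1,\ldots,n\}\setminus A(N,K)\bigr) \;\leq\; \sum_{m=1}^{n} r(m) \;\leq\; CKn,
\]
and dividing by $n$ yields the desired lower bound $\#(A(N,K)\cap\{1,\ldots,n\})/n \geq 1 - CK/(N+1)$.

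The argument is essentially formal once Lemma \ref{2-DPU} is in hand, so there is no substantive obstacle here; the real work was already carried out in establishing that the Birkhoff sum $\sum_{k=0}^{n-1}\phi(f^k(x))$ grows at most linearly along the orbit of $x$, which in turn relied on the one-dimensional DPU lemma together with the exponential contraction toward $L$. The only minor point to watch is that the statement is indexed on $\{1,\ldots,n\}$ rather than $\{0,\ldots,n-1\}$, but this shift by one affects only an additive constant and is absorbed for $n$ large.
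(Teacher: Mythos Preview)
Your proof is correct and matches the paper's approach: the paper does not spell out a proof but simply refers to \cite[Theorem 3.1]{levin2016lyapunov} with the instruction to substitute Lemma \ref{2-DPU} for the one-dimensional DPU lemma, and the double-counting (Markov/Fubini) argument you give is exactly what that reference does. The only cosmetic point is that the paper defines $S(j,K)$ for positive integers $j$, so your sum should run over $j\in\{1,\ldots,n-1\}$ rather than from $j=0$, but as you note this is an additive constant absorbed for $n$ large.
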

\medskip
\begin{theorem}\label{good time}
Suppose $m$ is an $ \sigma' $-hyperbolic time and $m\in A(N,\frac{1}{\log \sigma})$, then there exist a constant $\delta>0$ such that if we let $V_m$ be the connected component of $f^{-m}D_v(f^m(x),\delta)$ containing $x$, then $f^m:V_m\to D_v(f^m(x),\delta)$ has degree at most $N$.
\end{theorem}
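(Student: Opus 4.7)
First I would set up the pullback tower along the orbit: for $0\leq i\leq m$ let $V_{m,i}:=\Comp_{f^i(x)}f^{-(m-i)}(D_v(f^m(x),\delta))$, so the statement's $V_m$ is $V_{m,0}$. The degree of $f^m:V_m\to D_v(f^m(x),\delta)$ is governed by the indices $i\in\{0,\dots,m-1\}$ for which $V_{m,i}$ meets the critical set $\C$ of $f$; each such encounter contributes a bounded local ramification (the maximal order of a critical point of $p$), which is a multiplicative constant that can be absorbed into the bound. By the same argument as in Step~1 of the proof of Theorem~\ref{slow approach}, once $\delta$ is small enough the orbit of $x$ stays uniformly far from any component of $\C$ outside $\Crit'$; so only encounters with $\Crit'$ need to be counted.

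The $\sigma'$-hyperbolic time hypothesis gives $|(Df^{m-i})_{f^i(x)}(v)|\geq(\sigma')^{m-i}$ for every $0\leq i\leq m-1$. Combined with the Koebe-type estimate of Lemma~\ref{Koebe}(2.1) applied to $f^{m-i}:V_{m,i}\to D_v(f^m(x),\delta)$, and as long as the number of critical points accumulated on the tower stays uniformly bounded, this yields the a priori estimate
\begin{equation*}
\diam(V_{m,i})\leq C\,\delta\,(\sigma')^{-(m-i)}.
\end{equation*}
If $V_{m,i}$ meets $\Crit'$ then $\diam(V_{m,i})\geq \dist_v(f^i(x),\Crit')=e^{-\phi(f^i(x))}$, whence
\begin{equation*}
m-i\leq \frac{\phi(f^i(x))+\log(C\delta)}{\log\sigma'}.
\end{equation*}
Choosing $\delta$ small enough to force $\log(C\delta)<0$ and exploiting the strict inequality $\sigma<\sigma'$, one upgrades this to $m-i\leq \phi(f^i(x))/\log\sigma$, i.e.\ $m\in S(i,1/\log\sigma)$. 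By hypothesis $m\in A(N,1/\log\sigma)$, so at most $N$ such indices exist; the degree bound follows.

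The hard part is the apparent circularity of this scheme: the diameter estimate used to count critical encounters depends, through Lemma~\ref{Koebe}, on the $D$-critical distortion constant, where $D$ is essentially the quantity one is trying to control. My plan is to resolve this by a downward bootstrap on $i$. Initialize $D=0$ at $i=m$; as $i$ decreases by one, either $V_{m,i}\cap\Crit'=\emptyset$ and $D$ is preserved, or $D$ increments by the local ramification of $f$ at the new critical point, in which case the inequality above registers that $m\in S(i,1/\log\sigma)$. Since the running $D$ never exceeds a uniform constant times the shadow count $N$, the Koebe constants stay uniform throughout the induction, and the argument closes.
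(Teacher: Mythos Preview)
Your proposal is correct and is precisely the telescope argument from \cite[Theorem 3.1]{levin2016lyapunov} that the paper cites in lieu of a proof; the downward bootstrap you describe to break the circularity between the Koebe constant and the criticality count is exactly how that argument closes, with $\delta$ chosen once and for all against the worst-case criticality bound determined by $N$. One cosmetic point: the number of critical encounters being at most $N$ yields a degree bound of the form $l_{\max}^{N}$ (with $l_{\max}$ the maximal local order of a critical point of $p$) rather than literally $N$, but this uniform bound in terms of $N$ is all that is used downstream and matches the paper's own level of precision.
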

\medskip
If we choose $N$ sufficiently large, by  Lemma \ref{density} the density of $A(N,\frac{1}{\log \sigma})$ is close to 1. Together with Lemma \ref{Pliss}, we get for $m$ large, the intersection $H'_m:=H_m\cap A(N,\frac{1}{\log \sigma})$ has uniform positive density when $m\to\infty$. Now by Theorem \ref{good time} we have
\begin{corollary}\label{distortion}
For large $n$ there exist a subset $ H'_n\subset \left\lbrace 1,\cdots,n\right\rbrace  $ and constants $\alpha>0$, $\delta>0$ such that $\# H'_n\geq \alpha n$ and for every $m\in  H'_n$,  $f^m:V_m\to D_v(f^m(x),\delta)$ has degree at most $N$.
\end{corollary}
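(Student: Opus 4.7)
The plan is to obtain Corollary 6.5 as a direct synthesis of the three preceding results: the positive density of $\sigma'$-hyperbolic times (Lemma 6.2), the density estimate for $A(N,K)$ (Lemma 6.4), and the bounded-degree statement on good hyperbolic times (Theorem 6.4). The key point is to choose the parameter $N$ large enough that the ``bad'' set produced by Lemma 6.4 cannot swallow the positive-density set of hyperbolic times produced by Pliss.

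First I would fix $K = 1/\log\sigma$ and invoke Lemma 6.4, which gives, for $n$ large,
\begin{equation*}
\frac{\#\bigl(A(N,K)\cap\{1,\dots,n\}\bigr)}{n}\geq 1-\frac{CK}{N+1}.
\end{equation*}
Next I would invoke Lemma 6.2 to obtain $\theta>0$ with $\#H_n/n > \theta$ for $n$ large. Now choose $N$ so large that $CK/(N+1) < \theta/2$. Define
\begin{equation*}
H'_n := H_n \cap A(N,K) \cap \{1,\dots,n\}.
\end{equation*}
By the two density estimates above and an elementary counting argument (the complement of $A(N,K)$ inside $\{1,\dots,n\}$ has density $<\theta/2$, while $H_n$ has density $>\theta$), we get $\#H'_n \geq (\theta/2)\,n$ for all sufficiently large $n$. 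Setting $\alpha := \theta/2$ gives the desired density bound.

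Finally, fix an arbitrary $m\in H'_n$. Since $m$ is a $\sigma'$-hyperbolic time and lies in $A(N,1/\log\sigma)$, Theorem 6.4 directly provides a uniform $\delta>0$ (independent of $m$ and $n$) and, letting $V_m$ be the component of $f^{-m}D_v(f^m(x),\delta)$ containing $x$, the conclusion that $f^m : V_m \to D_v(f^m(x),\delta)$ has degree at most $N$. This completes the argument.

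There is essentially no serious obstacle: all the analytic input (Pliss's lemma, the telescope-type summability of $\phi\circ f^j$ along the orbit via Lemma 6.3, and the hyperbolic-time pullback control) has already been packaged into Lemmas 6.2, 6.4 and Theorem 6.4. The only care needed is the order of quantifiers, namely that $N$ must be chosen \emph{before} $n$ and depends only on $\theta$, $C$ and $\sigma$, so that the uniform constants $\alpha$, $\delta$, and the degree bound $N$ are all independent of $n$; this is what makes the statement useful for the subsequent measure-zero argument for $J(f)$.
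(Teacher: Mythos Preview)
Your proposal is correct and follows essentially the same approach as the paper: define $H'_n := H_n \cap A(N,\tfrac{1}{\log\sigma})$, choose $N$ large enough that the density loss from Lemma~6.4 is less than the density $\theta$ supplied by Pliss, and then invoke the bounded-degree theorem on each $m\in H'_n$. Your write-up is in fact more explicit than the paper's, which simply states that for $N$ large the intersection has uniform positive density and then appeals to the preceding theorem.
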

\medskip
\par Now we are able to establish the main result of this section.
\begin{theorem}
The Julia set $ J(f) $ in the basin of $L$ has Lebesgue measure zero.
\end{theorem}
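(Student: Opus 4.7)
The plan is to argue by contradiction via a Lebesgue-density and pull-back argument, following the telescope strategy of \cite[Theorem 1.5]{levin2016lyapunov}. Suppose $\meas(J(f)\cap(\Delta\times\mathbb{C}))>0$. By Fubini, combined with Theorem 4.4 and Corollary 5.4, I fix $u\in\Delta$ in the full-measure set of Theorem 4.4, and pick a Lebesgue density point $x\in J(f)\cap L_u$ in the vertical fiber which slowly approaches $\Crit'$ and satisfies $\chi_{-}(x)\geq\log\mu_{Exp}$. Corollary 6.8 then yields a sequence of $\sigma'$-hyperbolic times $m_k\to\infty$ such that $f^{m_k}\colon V_{m_k}\to D_v(f^{m_k}(x),\delta)$ has degree at most $N$, where $V_{m_k}\subset L_u$ is the component of the preimage containing $x$. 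Let $V'_{m_k}\subset V_{m_k}$ denote the component of $f^{-m_k}(D_v(f^{m_k}(x),\delta/2))$ containing $x$; by the Koebe estimates (2.1), (2.2) together with the hyperbolic time condition, $\diam(V'_{m_k})\to 0$ exponentially while $V'_{m_k}$ has bounded geometry at $x$.

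Being a Lebesgue density point gives $\meas(J(f)\cap V'_{m_k})/\meas(V'_{m_k})\to 1$. Applying the Koebe-type measure inequality (2.4) to $F=f^{m_k}$, $W'=V'_{m_k}$ and $R=J(f)\cap D_v(f^{m_k}(x),\delta/2)$, and using the complete $f$-invariance of $J(f)$ to identify $F^{-1}(R)\cap V'_{m_k}=J(f)\cap V'_{m_k}$, one obtains
\[
\frac{\meas(J(f)\cap V'_{m_k})}{\meas(V'_{m_k})}\leq C_4\Bigl(\frac{\meas(J(f)\cap D_v(f^{m_k}(x),\delta/2))}{\delta^2}\Bigr)^{2^{-N}}.
\]
Since the left-hand side tends to $1$, this produces a uniform lower bound
\[
\liminf_{k\to\infty}\frac{\meas(J(f)\cap D_v(f^{m_k}(x),\delta/2))}{\meas(D_v(f^{m_k}(x),\delta/2))}\geq\eta_0>0,
\]
where $\eta_0$ depends only on $C_4$ and $N$.

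The contradiction comes from showing this density tends to $0$ along the sequence $(m_k)$. Extracting a further subsequence, $f^{m_k}(x)\to y^\ast\in J(p)\subset L$ and $g^{m_k}(u)\to 0$. By Proposition 2.9 (2), TCE implies that $J(p)$ has $2$-dimensional Lebesgue measure zero on $L$. For any $\varepsilon>0$, I choose a large disk $B_R\subset L$ containing all translated targets $D(\pi_2(f^{m_k}(x)),\delta/2)$ for $k$ large, together with a compact $K\subset F(p)\cap B_R$ satisfying $\meas(B_R\setminus K)<\varepsilon$. The standard persistence of the finitely many attracting basins of $p$ as attracting basins of $f$ (implicit function theorem applied to each attracting cycle, together with the openness of basins) yields $t_K>0$ such that $K+(t,0)\subset F(f)$ for all $|t|<t_K$. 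For $k$ large enough that $|g^{m_k}(u)|<t_K$, this forces $\meas(J(f)\cap D_v(f^{m_k}(x),\delta/2))<\varepsilon$; taking $\varepsilon<\eta_0\pi(\delta/2)^2$ contradicts the lower bound above. The main technical obstacle is exactly this uniform persistence argument: one must quantify how small $|t|$ needs to be for the basins of $p$ to survive in the fiber $L_t$ uniformly over the compact set $K$ of near-full measure in $B_R$, and handle all finitely many immediate basins of $p$ simultaneously. The remaining ingredients (Koebe distortion at bounded-degree hyperbolic times, the $2$-dimensional negligibility of $J(p)$, and complete invariance of $J(f)$) are already established in the preceding sections.
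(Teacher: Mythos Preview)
Your proposal is correct and follows essentially the same route as the paper's proof: both argue by contradiction from a vertical Lebesgue density point with slow approach and positive lower Lyapunov exponent, use Corollary~6.8 to obtain bounded-degree pull-backs along a sequence of hyperbolic times, apply the Koebe measure estimate (2.4), and derive a contradiction from the fact that $J(p)$ has zero measure together with the openness/persistence of the attracting basins near $L$. The only differences are presentational: the paper fixes the small target-density bound first and then pulls back to get a source density bounded away from $1$, whereas you push the density-$1$ information forward to get a uniform positive lower bound on the target density and then contradict it; and your persistence step, which you flag as the main obstacle, is in fact immediate from $\{0\}\times K\subset F(f)$ (since every attracting basin of $p$ bulges to one of $f$) together with compactness of $K$ and openness of $F(f)$.
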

\begin{proof}
Let $\delta$ and $N$ be as in Corollary \ref{distortion}. First we observe that the Fatou set $ F(p) $ in the invariant fiber $L$ has full Lebesgue measure, as a consequence of TCE. Let $\Omega$ be a relatively compact subset of $F(p)$, then there exist a constant $\varepsilon>0$ such that for $x$ satisfying $|\pi_1(x)|<\epsilon$, $\pi_2(x)\in\Omega$, we have $x\in F(f)$. Thus for $y\in\Delta\times\mathbb{C}$ with $\pi_1(y)$ sufficiently small we have
\begin{equation*}
\frac{\meas D_v(y,\delta/2)\cap J(f)}{\meas D_v(y,\delta/2)}\leq \varepsilon,
\end{equation*}
here meas denote the one-dimensional Lebesgue measure, and $\varepsilon$ is a constant to be determined in the next paragraph.
\medskip
\par Now we argue by contradiction. Suppose $J(f)$ has positive Lebesgue measure, then by the Lebesgue density theorem and the Fubini theorem there exist $x\in J(f)$ such that $x$ is a Lebesgue density point in the vertical line containing $x$. We may also assume that $x$ has positive Lyapunov exponent and slowly approach Crit'. By Corollary \ref{distortion} there is a sequence of positive integers $ \left\lbrace n_0,\cdots,n_k,\cdots\right\rbrace  $ such that $f^{n_k}:V_{n_k}\to D_v(f^{n_k}(x),\delta)$ has degree bounded by $N$ for all $k\geq 0$. Let $V'_{n_k}$ be the connected component of $ f^{-n_k}D_v(f^{n_k}(x),\delta/2) $ containing $x$. Let $\varepsilon$ sufficiently small such that $C_4 \varepsilon^{2^{-N}}<<1$, where $C_4$ is the constant in Lemma \ref{Koebe} (2.4). By Lemma \ref{Koebe} (2.4) we have
\begin{equation*}
\frac{\meas V'_{n_k}\cap J(f)}{\meas V'_{n_k}}\leq C,
\end{equation*}
where $C<1$ does not depend on $k$. Again by Lemma \ref{Koebe} (2.1) $\diam V'_{n_k}$ is exponentially small, and also by Lemma 2.4 (2.2), $V'_{n_k}$ has uniformly good shape (the ratio of the diameter and the inradius of $V'_{n_k}$ is uniformly bounded). This contradicts that $x$ is a Lebesgue density point. Thus $J(f)$ must have Lebesgue measure zero.
\end{proof}
\medskip
\begin{remark}
The main theorem also holds in a slightly more general setting. Let $\Delta$ be a disk. Let $f:\Delta\times \mathbb{P}^1\to \Delta\times \mathbb{P}^1$ be a skew product holomorphic map in the following form: $f(t,z)=\left( \lambda t, h(t,z)\right) $, where $|\lambda|<1$ and $h(t,z)$ is a rational map in $z$ for fixed $t$. We assume moreover that the degree of $h(t,z)$ in $z$ is a constant for $t\in \Delta$. Let $L=\left\lbrace t=0\right\rbrace $ be the invariant fiber and let $ p=f|_L $. Assume $p$ has non-empty Fatou set and $p$ satisfies either 1.TCE+WR or 2.Positive Lyapunov. Then the Fatou set of $f$ is the union of the basins of attracting cycles and the Julia set of $f$ has Lebesgue measure zero.
\medskip
\par We notice that $\Delta\times \mathbb{P}^1$ can not be embedded into $\mathbb{P}^2$ since any two projective lines in $\mathbb{P}^2$ have non-trivial intersection. However $\Delta\times \mathbb{P}^1$ can be embedded into $\mathbb{P}^1\times \mathbb{P}^1$, and the $f$ above can be realized as a semi-local restriction of a globally defined meromorphic map from $\mathbb{P}^1\times \mathbb{P}^1$ to $\mathbb{P}^1\times \mathbb{P}^1$. To construct such examples, we start with a skew product meromorphic self map $f:\mathbb{P}^1\times \mathbb{P}^1\to \mathbb{P}^1\times \mathbb{P}^1$, $f(t,z)=(g(t),h(t,z))$, where $g$ is a one-variable rational function and $h$ is a two-variable rational function. The function $h$ has finite number of indeterminacy points, and $f$ is holomorphic outside these indeterminacy points. We choose $g$ such that $g$ has an attracting fixed point $t_0$, and there are no indeterminacy points in the line $\left\lbrace t_0\right\rbrace \times \mathbb{P}^1 $. Thus there exist a small neighborhood $\Omega$ of $\left\lbrace t_0\right\rbrace \times \mathbb{P}^1 $ such that $f:\Omega\to\Omega$ is holomorphic, thus $f$ is a skew product holomorphic map. We note that indeterminacy points are necessary since a globally holomorphic self map of $\mathbb{P}^1\times \mathbb{P}^1$ must be a product map, see \cite[Remark 1.6]{favre2001dynamique}.
\end{remark}
\medskip
\begin{appendix}
\section{Relations between non-uniformly hyperbolic conditions}
In this Appendix we study the relations between non-uniform hyperbolic conditions given in section 2. In the following we assume $f$ is a rational map on $\mathbb{P}^1$ and distance are relative to the spherical metric.
\begin{definition}
	A rational map $f$ satisfies {\em Slow Recurrence condition with exponent $\alpha$} ({\em SR($ \alpha $)} for short) if for every critical point $c\in J(f)$, there exist an $\alpha>0$ such that
\begin{equation*}
\dist (f^n(c),\Crit' )\geq e^{-n\alpha}\;\text{for n large}.
\end{equation*}
\end{definition}
\medskip
\begin{lemma}\label{SR}
WR($ \eta $,$ \iota $) implies SR($ \alpha $) for some $ \alpha(\iota)>0 $, and $\alpha\to 0$ when $\iota\to 0$.
\end{lemma}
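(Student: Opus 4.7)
The strategy is a contradiction argument: suppose SR$(\alpha)$ fails, so there exists a critical point $c\in\C\cap J(f)$ whose associated critical value $v=f(c)$ visits the $e^{-\alpha n}$-neighborhood of $\Crit'$ for arbitrarily large $n$. I will show that one such bad time already forces a single summand in the WR inequality to be of size $\gtrsim\alpha n$, which contradicts WR$(\eta,\iota)$ as soon as $\alpha$ is a bit larger than $\iota$.

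First I would reduce to the situation where every summand of the WR sum is nonnegative. Near each $c'\in\Crit'$ of local multiplicity $k\geq 2$ one has the standard estimate $|f'(x)|\leq C|x-c'|^{k-1}$, so by shrinking $\eta$ if necessary (which only removes nonnegative terms from the WR sum and hence preserves WR$(\eta,\iota)$) I may assume $|f'|\leq 1$ on the $\eta$-neighborhood of $\Crit'$.

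Next, fix a critical point $c\in\C\cap J(f)$ with $v=f(c)$, and suppose $\dist(f^n(v),\Crit')<e^{-\alpha n}$ for some large $n$ (large enough that $e^{-\alpha n}<\eta$, so $j=n$ enters the WR sum). Letting $c'\in\Crit'$ realize this minimum distance, the estimate above yields
\begin{equation*}
-\log|f'(f^n(v))|\geq (k-1)\log\frac{1}{\dist(f^n(v),c')}-C_1\geq (k-1)\alpha n-C_1\geq \alpha n-C_1.
\end{equation*}
Since all other summands in WR are nonnegative, WR$(\eta,\iota)$ gives $\alpha n-C_1<(n+1)\iota+C_0$. Choosing $\alpha:=2\iota$ (any $\alpha>\iota$ works), this reads $\iota n<C_0+C_1+\iota$, which is false for $n$ large. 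Hence SR$(2\iota)$ holds, and clearly $\alpha(\iota)=2\iota\to 0$ as $\iota\to 0$.

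The only technical point is the initial shrinking of $\eta$ to guarantee nonnegative summands; without it, negative contributions from points where $|f'|>1$ could swamp the large term coming from the bad time $n$, and one would only recover a bound of the shape $\alpha\lesssim(\iota+\log\|f'\|_\infty)/(k-1)$, which does not vanish with $\iota$. The edge case where the forward orbit of $v$ meets a critical point is excluded by the hypothesis on $v$ in the definition of WR, so no separate treatment is required.
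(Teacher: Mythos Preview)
Your overall approach coincides with the paper's: both bound a single summand $-\log|f'(f^n(v))|$ by the WR sum and then convert that into a lower bound on $\dist(f^n(v),\Crit')$. The paper's proof is in fact briefer than yours---it simply asserts $-\log|f'(f^n(c))|<n\iota+C_0$ ``by the definition of WR($\eta,\iota$)'' and then reads off $|f'(f^n(c))|>e^{-n\iota-C_0}$, from which the distance bound follows.

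There is, however, a sign error in your reduction step. When you shrink $\eta$ to some $\eta'$, the terms you \emph{remove} from the WR sum are those indexed by $j$ with $\eta'<\dist(f^j(v),\Crit')\leq\eta$. These are points at the \emph{outer} part of the $\eta$-neighborhood, where $|f'|$ is largest; there one may well have $|f'|>1$, i.e.\ $-\log|f'|<0$. Removing negative terms can only make the partial sums larger, so WR$(\eta',\iota)$ with the same $\iota,C_0$ does not follow. Your parenthetical ``only removes nonnegative terms'' has the sign exactly backwards, and your own final paragraph shows you are aware that negative summands are the danger.

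The clean fix is not to shrink $\eta$ but to assume it small to begin with: once $\eta$ is small enough that $C\eta^{k-1}\leq 1$ for every $c'\in\Crit'$, the estimate $|f'(x)|\leq C|x-c'|^{k-1}$ already gives $|f'|\leq 1$ on the whole $\eta$-neighborhood, so every summand is nonnegative and the single-term bound follows immediately. This is precisely the setting of the paper, where $\eta$ enters via the full WR condition (Definition~2.9) and may be taken arbitrarily small; the paper's proof is tacitly using this. As stated for an arbitrary $\eta$, the lemma would need either this smallness hypothesis or an extra argument controlling the possible negative contribution.
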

\begin{proof}
By the definition of WR($ \eta $,$ \iota $) in particular we have $-\log|f'(f^n(c))|<n\iota+C_0$ for every $n\geq 0$, which is equivalent to say $ |f'(f^n(c))|>e^{-n\iota-C_0} $, then it is straightforward that there exist an $\alpha>0$ such that $\dist (f^n(c),\Crit' )\geq e^{-n\alpha}$ for large $n$, and $\alpha\to 0$ when $\iota\to 0$.
\end{proof}

\medskip
\begin{lemma}
Positive Lyapunov implies SR($ \alpha $) for every $\alpha>0$.
\end{lemma}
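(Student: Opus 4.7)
The plan is a short contradiction argument that exploits the fact that Positive Lyapunov provides a genuine limit, not merely a liminf. Fix $c\in\Crit'$ whose forward orbit avoids the other critical points, and set $v=f(c)$, $\chi:=\lim_{n\to\infty}\frac{1}{n}\log|(f^n)'(v)|>0$. I will suppose SR$(\alpha)$ fails for some $\alpha>0$: namely, there exist a subsequence $n_k\to\infty$ and critical points $c_k\in\Crit'$ with
\[
\dist(f^{n_k}(c),c_k)=\dist(f^{n_k-1}(v),c_k)<e^{-n_k\alpha},
\]
and derive $\chi\leq\chi-\alpha$.

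For the local estimate, since $\Crit'$ is finite, there is a constant $K>0$ such that for every $c'\in\Crit'$ of local degree $d\geq 2$ and every $z$ sufficiently close to $c'$,
\[
|f'(z)|\leq K\,\dist(z,c')^{d-1}\leq K\,\dist(z,c').
\]
Applied at $z=f^{n_k-1}(v)$ and $c'=c_k$, this gives
\[
\log|f'(f^{n_k-1}(v))|\leq \log K-n_k\alpha
\]
for all large $k$. Combining with the chain-rule identity $\log|(f^{n_k})'(v)|=\log|(f^{n_k-1})'(v)|+\log|f'(f^{n_k-1}(v))|$, I obtain
\[
\log|(f^{n_k})'(v)|\leq \log|(f^{n_k-1})'(v)|+\log K-n_k\alpha.
\]

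Dividing by $n_k$ yields
\[
\frac{\log|(f^{n_k})'(v)|}{n_k}\leq \frac{n_k-1}{n_k}\cdot\frac{\log|(f^{n_k-1})'(v)|}{n_k-1}+\frac{\log K}{n_k}-\alpha.
\]
By Positive Lyapunov applied along $n_k$ and along $n_k-1$, both $\frac{\log|(f^{n_k})'(v)|}{n_k}$ and $\frac{\log|(f^{n_k-1})'(v)|}{n_k-1}$ converge to $\chi$, so taking $k\to\infty$ gives $\chi\leq\chi-\alpha$, a contradiction with $\alpha>0$. Hence SR$(\alpha)$ holds for every $\alpha>0$.

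There is no substantive obstacle here; the delicate point is only conceptual. The argument crucially uses that Positive Lyapunov is a two-sided asymptotic, so that the normalizations at consecutive indices along the same subsequence share the same limit $\chi$. Under the weaker CE hypothesis, which only provides a uniform lower bound (equivalently, a positive liminf), the term $\frac{\log|(f^{n_k-1})'(v)|}{n_k-1}$ would not be forced to be close to $\frac{\log|(f^{n_k})'(v)|}{n_k}$, and the local collapse of the derivative at one close encounter with $\Crit'$ could be absorbed without contradicting the liminf bound—which is precisely why SR for every $\alpha$ is a strictly stronger conclusion than one gets from CE alone.
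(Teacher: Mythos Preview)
Your proof is correct and is essentially the contrapositive of the paper's argument. The paper observes directly that the existence of the limit $\chi=\lim_n\frac{1}{n}\log|(f^n)'(v)|$ forces $\frac{1}{n}\log|f'(f^n(v))|\to 0$, hence $|f'(f^n(v))|>e^{-\beta n}$ for every $\beta>0$ and large $n$, and then converts this into the distance bound via the local model near critical points; you instead assume a close approach, convert it into a one-step derivative collapse, and reach the same contradiction $\chi\le\chi-\alpha$ using that both $\frac{1}{n_k}\log|(f^{n_k})'(v)|$ and $\frac{1}{n_k-1}\log|(f^{n_k-1})'(v)|$ tend to $\chi$. The underlying mechanism---that a genuine limit (as opposed to a $\liminf$) rules out single-step derivative drops of order $e^{-\alpha n}$---is identical in both.
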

\begin{proof}
By the definition of Positive Lyapunov in particular  we have 
\begin{equation*}
\lim_{n\to 0}\frac{\log |f'(f^n(c))|}{n}= 0.
\end{equation*}
Thus for every $\beta>0$ we have $ |f'(f^n(c))|>e^{-\beta n} $ for large $n$ . Similarly to Lemma \ref{SR}, there exist an $\alpha>0$ such that $\dist (f^n(c),\Crit' )\geq e^{-n\alpha}$ for large $n$, and $\alpha\to 0$ when $\beta\to 0$. Thus $f$ satisfies SR($\alpha$) for every $\alpha>0$.
\end{proof}
\medskip
\begin{lemma}
TCE+WR($ \eta $,$ \iota $) with $\eta$ small or TCE+SR($ \alpha $) with $\alpha$ small implies CE.
\end{lemma}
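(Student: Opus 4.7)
My approach is to reduce the WR case to the SR case via Lemma~\ref{SR}, and then treat the SR case directly using Lemma~\ref{hyperbolic away}. Lemma~\ref{SR} asserts that $\mathrm{WR}(\eta,\iota)$ implies $\mathrm{SR}(\alpha)$ with $\alpha=\alpha(\iota)\to 0$ as $\iota\to 0$; hence TCE + $\mathrm{WR}(\eta,\iota)$ with $\iota$ small yields TCE + $\mathrm{SR}(\alpha)$ with $\alpha$ as small as desired, so both hypotheses reduce to this common form, and I only need to prove that TCE + $\mathrm{SR}(\alpha)$ with $\alpha$ sufficiently small implies CE.

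For the core estimate, fix a critical point $c\in\Crit'$ whose forward orbit avoids other critical points and set $v=f(c)\in J(p)$. Under $\mathrm{SR}(\alpha)$ there exists $N_0$ such that $\dist(f^j(v),\Crit')\geq e^{-j\alpha}$ for all $j\geq N_0$. For the finitely many initial indices $j<N_0$, the orbit $\{f^j(v)\}$ is a finite subset of $J(p)\setminus\Crit$, so $\dist(f^j(v),\Crit')\geq\eta_{\ast}$ for some $\eta_{\ast}>0$. Consequently, for every $n$ large enough that $\eta_{\ast}\geq e^{-n\alpha}$, every $0\leq j<n$ satisfies $\dist(f^j(v),\Crit')\geq e^{-n\alpha}$. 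Applying Lemma~\ref{hyperbolic away} to the orbit segment $\{v,f(v),\ldots,f^{n-1}(v)\}$ with threshold $\eta=e^{-n\alpha}$ yields
\begin{equation*}
|(f^n)'(v)|\;\geq\; C_1\,e^{-\alpha_0\alpha n}\,\mu_{Exp}^n \;=\; C_1\bigl(\mu_{Exp}\,e^{-\alpha_0\alpha}\bigr)^n,
\end{equation*}
where $\alpha_0$ denotes the exponent produced by Lemma~\ref{hyperbolic away}. Choosing $\alpha$ small enough that $\alpha_0\alpha<\log\mu_{Exp}$, this gives $|(f^n)'(v)|\geq C\mu_{CE}^n$ with $\mu_{CE}:=\mu_{Exp}\,e^{-\alpha_0\alpha}>1$; the finitely many small $n$ are absorbed into the multiplicative constant $C$ since $|(f^n)'(v)|>0$ throughout. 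Absence of parabolic cycles, which is part of the CE definition, is inherited from TCE via Proposition~\ref{TCE}(1).

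The proof is essentially a one-line application of Lemma~\ref{hyperbolic away} once the reduction is in place. The only delicate point, and the main place where care is needed, is that the threshold $\eta=e^{-n\alpha}$ depends on $n$; this is legitimate because the conclusion of Lemma~\ref{hyperbolic away} holds with constants $C_1$ and $\alpha_0$ that are uniform in $\eta$, and the exponential damping $\eta^{\alpha_0}=e^{-\alpha_0\alpha n}$ is dominated by the exponential gain $\mu_{Exp}^n$ precisely when $\alpha<\log\mu_{Exp}/\alpha_0$, which is exactly what ``$\alpha$ small'' means in the statement.
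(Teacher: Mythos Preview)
Your proof is correct and follows essentially the same strategy as the paper: both reduce the WR case to the SR case via Lemma~\ref{SR}, and both then show that under TCE + SR($\alpha$) the derivative along the critical orbit grows like $\mu_{Exp}^n$ up to a sub-exponential correction governed by $\alpha$. The only difference is packaging: the paper argues directly from the TCE definition (pulling back a ball of radius $r$ from time $n+\varepsilon n$, using SR to guarantee the pull-backs miss $\Crit'$, then Koebe), whereas you invoke Lemma~\ref{hyperbolic away}, which already encapsulates exactly this estimate. Your route is more economical since Lemma~\ref{hyperbolic away} is already available. One small remark: Lemma~\ref{hyperbolic away} is stated for the polynomial $p$, while Appendix~A concerns a general rational map $f$; since the proof of Lemma~\ref{hyperbolic away} uses only TCE and Theorem~\ref{BC}, it holds verbatim for any TCE rational map, and it would be worth noting this when you cite it.
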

\begin{proof}
  This lemma was proved by Li in \cite{li2017topological} for real maps, and Li's argument can also apply to rational maps. Here we give a simple proof for rational maps. This kind of argument has already appeared in \cite{przytycki2003equivalence}. By Lemma \ref{SR} it is sufficient to prove TCE+SR($ \alpha $) with $\alpha$ small implies CE. Let $v=f(c)$. Let $M=\sup_{x\in\mathbb{P}^1}|f'(x)|$. Set $\alpha_1=\frac{2\alpha\log M}{\log\mu_{Exp }} $ and $ \varepsilon=\frac{\alpha_1}{2\log M} $. Note that for all large $n$, $ rM^{-\varepsilon n}\geq e^{-\alpha_1 n}$, here $r$ is the constant appearing in the definition of the TCE condition. We note that 
   \begin{equation*}
     D(f^n(v),e^{-\alpha_1 n})\subset \Comp _{f^n(v)}f^{-\varepsilon n}(D(f^{n+\varepsilon n}(v),r)), 
   \end{equation*} here $ \Comp _y $ means the connected component containing $y$ 
\medskip
\par   For every $0\leq s\leq n$ and $ n $ large we have
\begin{equation*}
\diamComp_{f^s(v)}f^{s-(n+\varepsilon n)}(D(f^{n+\varepsilon n}(v),r))\leq \mu_{Exp }^{s-(n+\varepsilon n)}\leq \mu_{Exp }^{-\varepsilon n}.
\end{equation*}
Since $ D(f^n(v),e^{-\alpha_1 n})\subset \Comp _{f^n(v)}f^{-\varepsilon n}(D(f^{n+\varepsilon n}(v),r)) $, we have
\begin{equation*}
\diamComp_{f^s(v)}f^{s-n}(D(f^{n}(v),e^{-\alpha_1 n}))\leq  \mu_{Exp }^{-\varepsilon n}= e^{-\alpha n}
\end{equation*}

By SR($ \alpha $), for all large $n$ and all $0\leq s\leq n$ we have
\begin{equation*}
\Comp _{f^s(v)}f^{s-n}(D(f^{n}(v),e^{-\alpha_1 n}))\cap\C =\emptyset.
\end{equation*}
Hence $ f^n $ restricted to $ \Comp _{v}f^{-n}(D(f^{n}(v),e^{-\alpha_1 n})) $ is univalent, by Koebe distortion lemma there exist a constant $ C>0 $ such that $ |(f^n)'(v)|\geq C e^{-\alpha_1 n}/\mu_{Exp }^{-n} $. Since $\alpha$ is small we get $f$ is CE.
\end{proof}

\section{Genericity of non-uniformly hyperbolic conditions}
In this Appendix we give some families of polynomials satisfying the consitions in our main theorem (i.e. TCE+WR or Positive Lyapunov).
\medskip
\par  In real dynamics, the WR condition was first introduced in the Tsujii's paper \cite{tsujii1993positive}. Avila and Moreira proved that CE+WR condition is generic (has full Lebesgue measure) in every non-trivial analytic family of  S-unimodal maps \cite{avila2001statistical}. The condition CE+WR was also studied by Luzzatto and Wang \cite{luzzatto2006topological}, and also by Li \cite{li2017topological} in relation to topological invariance.  For the Positive Lyapunov condition, Avila and Moreira proved that this condition  is generic (has full Lebesgue measure) in every non-trivial analytic family of quasi-quadratic maps \cite{avila2005statistical}. The quadratic family $ \left\lbrace f_t(x)=t-x^2 \right\rbrace  $ for $\frac{-1}{4}\leq t\leq 2$ is obviously a non-trivial analytic family of quasi-quadratic maps, and of S-unimodal maps. So our theorem also applies for these real polynomials (seen as complex dynamical systems).
\medskip
\par In the rational map case it was shown by Astorg, Gauthier, Mihalache and Vigny that the CE and WR($\eta,\iota$) with arbitrarily small $\iota$ are robust \cite[Lemma 5.5]{astorg2017collet} (in the sense that there is a positive Lebesgue measure set in the parameter space satisfying both these two conditions).
\medskip
\par  Next we consider the family of uni-critical polynomials, i.e.  the family $ \left\lbrace f_c(z)=z^d+c \right\rbrace $, $c\in\mathbb{C}$ and $d\geq 2$ an integer. We let  $ \mathcal{M}_d $ be the connectedness locus, and let $\partial \mathcal{M}_d$ be the bifurcation locus. There is a harmonic measure (with pole at $\infty$) supported on $\partial \mathcal{M}_d$. It is shown by Graczyk and Swiatek in \cite{graczyk2015lyapunov} that for a.e. $ c\in\partial \mathcal{M}_d $ in the sense of harmonic measure the Lyapunov exponent at $c$ exist and is equal to $\log d$. 
\medskip
\par For the WR condition, the author is told by Jacek Graczyk that WR condition is actually generic in the sense of harmonic measure in the family of uni-critical polynomials. We thank Jacek Graczyk for kindly let us write down his argument here.

\begin{theorem}
 In the uni-critical family $\left\lbrace f_c(z)=z^d+c \right\rbrace $, $d\geq 2$, a.e. $x\in \partial \mathcal{M}_d$ in the sense of harmonic measure satisfies WR condition.
\end{theorem}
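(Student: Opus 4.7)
The plan is to transfer the WR condition for $f_c(z)=z^d+c$ into a statement about ergodic averages of an $L^1$ weight under the angle-doubling map, via the external-ray correspondence between $\partial\mathcal{M}_d$ and the dynamical Julia sets. Since $f_c'(z)=dz^{d-1}$ and the unique critical point is $0$, a short computation reduces WR to showing, for $\omega$-a.e.\ $c\in\partial\mathcal{M}_d$,
\[
\lim_{\eta\to 0}\limsup_{n\to\infty}\frac{1}{n}\sum_{\substack{j<n\\|f_c^j(c)|\le\eta}}\log\tfrac{1}{|f_c^j(c)|}=0.\qquad(\diamond)
\]

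By the Douady-Hubbard uniformization $\Phi:\mathbb{C}\setminus\overline{\mathbb{D}}\to\mathbb{C}\setminus\mathcal{M}_d$, harmonic measure $\omega$ is identified with Lebesgue measure on $\partial\mathbb{D}$, and to a.e.\ angle $\theta$ the parameter ray $\Phi(re^{2\pi i\theta})$ lands at a point $c=c(\theta)\in\partial\mathcal{M}_d$. The key dynamical fact is that, in the dynamical plane of $f_c$, the critical value $c$ is the landing point of the dynamical external ray at angle $\theta$, and $f_c^j(c)=\gamma_c(d^j\theta)$ where $\gamma_c:\partial\mathbb{D}\to J(f_c)$ is the landing map. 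The Brolin measure $\mu_c$ on $J(f_c)$ is then the pushforward $(\gamma_c)_*\mathrm{Leb}$.

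For Lebesgue-a.e.\ $\theta$, the Graczyk-Świątek / Smirnov theorem gives that $f_c$ satisfies CE (indeed TCE) with Lyapunov exponent at $c$ equal to $\log d$. For CE maps the landing map $\gamma_c$ is Hölder continuous, and in particular $\theta\mapsto\log(1/|\gamma_c(\theta)|)$ lies in $L^1(\partial\mathbb{D})$, with vanishing tail $\int_{\{|\gamma_c|\le\eta\}}\log(1/|\gamma_c|)\,d\theta\to 0$ as $\eta\to 0$. Birkhoff's ergodic theorem for the angle-doubling map $T_d(\theta)=d\theta$ (ergodic with respect to Lebesgue measure), applied to the weight $\psi_{\eta,c}(\theta):=\log(1/|\gamma_c(\theta)|)\mathbf{1}_{\{|\gamma_c(\theta)|\le\eta\}}$, would then yield $(\diamond)$.

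The main obstacle is that $\psi_{\eta,c}$ depends on the starting angle $\theta$ through $c=c(\theta)$, so a literal application of Birkhoff is not possible. The plan is to decouple the parameter from the orbit angle via a Fubini-type argument: regard $(\theta,\theta_0)\in\partial\mathbb{D}\times\partial\mathbb{D}$ with $c:=c(\theta_0)$ independent of $\theta$, apply Birkhoff on a set of full product measure, and then use uniform Hölder estimates for $\gamma_{c(\theta_0)}$ near $\theta_0$ together with slow recurrence of the critical orbit of $f_{c(\theta_0)}$ (a consequence of CE, cf.\ Lemma A.3) to transfer the conclusion to the diagonal $\theta=\theta_0$. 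Absorbing the logarithmic singularity at $\gamma_c^{-1}(0)$ through this quantitative comparison is the principal technical content of the argument.
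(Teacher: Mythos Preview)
Your reduction to $(\diamond)$ and the external-ray setup are correct: the Brolin measure is indeed $(\gamma_c)_*\mathrm{Leb}$, the weight $\log(1/|\gamma_c(\cdot)|)$ is in $L^1$ with vanishing tail near the preimage of $0$, and the whole question is whether the critical angle $\theta_0$ is Birkhoff-generic for this weight under $T_d$. You also correctly identify the obstacle: the weight depends on $c=c(\theta_0)$.

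The gap is in your proposed resolution. Fubini on $\partial\mathbb{D}\times\partial\mathbb{D}$ gives, for a.e.\ $\theta_0$, convergence of Birkhoff averages for a.e.\ $\theta$; but the exceptional null set of $\theta$'s depends on $\theta_0$ through $c(\theta_0)$, and there is no mechanism forcing $\theta_0$ itself to avoid it. Your ``transfer to the diagonal via H\"older estimates and slow recurrence'' does not do this: since $T_d$ is uniformly expanding, H\"older continuity of $\gamma_{c(\theta_0)}$ only compares $\gamma_{c}(d^j\theta)$ with $\gamma_{c}(d^j\theta_0)$ for $j\lesssim\log(1/|\theta-\theta_0|)$, i.e.\ for a bounded initial block, which is irrelevant for the $\limsup$. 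Slow recurrence bounds how close $f_c^j(c)$ comes to $0$ but says nothing about equidistribution. In short, the step you label ``principal technical content'' is not an argument but a restatement of the problem.

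What the paper does instead is invoke, as a black box, the Graczyk--\'Swi\k{a}tek theorem (\emph{IMRN} 2015) that for harmonic-measure-a.e.\ $c\in\partial\mathcal{M}_d$ the critical value $c$ is a \emph{typical} point for the Brolin measure $\omega_c$: Birkhoff averages along the orbit of $c$ converge to the space average for every continuous observable. This is precisely the diagonal statement you are after, and its proof is not a Fubini trick. With typicality in hand, the paper tests against continuous truncations $H_\delta$ of $-\log|z|$ and bump functions $F_\delta$ supported near $0$, uses the H\"older estimate $\omega_c(D(0,r))\le Cr^\alpha$ (from H\"older continuity of the dynamical Green function) to kill the contribution near the singularity, and combines this with the companion Graczyk--\'Swi\k{a}tek fact that the pointwise Lyapunov exponent at $c$ equals $\log d$. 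The passage from ``typical for continuous functions'' to $(\diamond)$ via truncation is routine; the depth is entirely in the cited typicality result, which your proposal attempts to rederive without a viable strategy.
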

\begin{proof}
For $c\in\mathbb{C}$, let $\omega_c$ be the unique measure of maximal entropy of $f_c$. It is a result of Brolin \cite{brolin1965invariant} that its Lyapunov exponent $\int \log|f'_c(z)|
\;d\omega_c$ is is equal to $\log d$. Since $|f_c'(z)|=d|z^{d-1}|$ we get 
$\int -\log |z|\;d\omega_c=0$.
\par We define the truncation function $H_\delta$ on $J(f_c)$ for $\delta>0$ as 
$$ H_\delta(z)=\left\{
\begin{aligned}
&-\log |z|,\;\text{when} \;|z|>\delta, \\&
-\log |\delta|,\;\text{when}\; |z|\leq\delta .
\end{aligned}
\right.
$$
\par Thus $H_\delta$ is a continuous function, and $H_\delta\to -\log|\cdot|$ when $\delta\to 0$ in $L^1(\omega_c)$. According to \cite{graczyk2015lyapunov} section 1.1, for a.e. $c\in\partial\mathcal{M}_d$ in the sense of harmonic measure, the critical value $c$ of $f_c$ is typical with respect to $\omega_c$. Here typical means for every continuous function $H$ on $J(f_c)$,
\begin{equation}
\lim_{n\to\infty}\frac{1}{n} \sum_{i=0}^{n-1}H(f_c^i(c))=\int H \;d\omega_c.
\end{equation}
\par Applying (B.1) to $H_\delta$, together with that fact that $H_\delta\to -\log|\cdot|$ in  $L^1(\omega_c)$ as $\delta\to 0$  we get 
\begin{equation}
\lim_{\delta\to 0}\lim_{n\to\infty}\frac{1}{n} \sum_{i=0}^{n-1}H_\delta(f_c^i(c))=\int -\log |z|\;d\omega_c=0.
\end{equation}
\par On the other hand, for every $\delta>0$ let $ F_\delta $ be a positive continuous function such that $\supp F_\delta\subset D(0,2\delta)$, $\left\| F_\delta\right\|_{\infty}=1$  and $F_\delta\geq \chi_{D(0,\delta)}$. Then for for a.e. $c\in\partial\mathcal{M}_d$ in the sense of harmonic measure we have 
\begin{equation}
 \lim_{n\to\infty}\frac{-\log\delta}{n} \sum_{i=0}^{n-1} F_\delta(f_c^i(c))=-\log\delta \int F_\delta \;d\omega_c\leq -\log\delta \;\omega_c(D(0,2\delta)).
\end{equation}
By \cite[ Lemma 4]{przytycki1990harmonic} (or by the fact that the dynamical Green function is H\"older continuous, see \cite{sibony1999dynamique} Theorem 1.7.3), for every $c\in \mathbb{C}$ there exist constants $C=C(c)>0$, $\alpha=\alpha(c)>0$ such that for every $r>0$ we have $\omega_c(D(0,r))\leq Cr^\alpha$.

\medskip
 \par Thus for $c$ satisfying (B.3) we have
\begin{equation*}
\lim_{n\to\infty}\frac{-\log\delta}{n} \sum_{i=0}^{n-1} F_\delta(f_c^i(c))\leq -\log \delta C (2\delta)^\alpha.
\end{equation*}
Thus we have
\begin{align*}
\lim_{\delta\to 0}\limsup_{n\to\infty}\frac{-\log\delta}{n} \sum_{i=0}^{n-1} \chi_{D(0,\delta)}(f_c^i(c))&\leq \lim_{\delta\to 0}\limsup_{n\to\infty}\frac{-\log\delta}{n} \sum_{i=0}^{n-1} F_\delta(f_c^i(c))\\&\leq \lim_{\delta\to 0}-\log \delta C (2\delta)^\alpha=0.
\end{align*}
We conclude that 
\begin{equation}
\lim_{\delta\to 0}\limsup_{n\to\infty}\frac{-\log\delta}{n} \sum_{i=0}^{n-1} \chi_{D(0,\delta)}(f_c^i(c))=0.
\end{equation}
It is easy to check
\begin{equation*}
\sum_{\begin{subarray}{c}i=0\\f_c^i(c)\notin D(0,\delta)\end{subarray}}^{n-1}-\log|f_c^i(c)|=\sum_{i=0}^{n-1}H_\delta(f_c^i(c))+\log\delta\sum_{i=0}^{n-1} \chi_{D(0,\delta)}(f_c^i(c))
\end{equation*}
Combining (B.2) and (B.4) we get for a.e. $c\in\partial\mathcal{M}_d$ in the sense of harmonic measure
\begin{equation}
\lim_{\delta\to 0}\liminf_{n\to\infty}\frac{1}{n}\sum_{\begin{subarray}{c}i=0\\f_c^i(c)\notin D(0,\delta)\end{subarray}}^{n-1}-\log|f_c^i(c)|=0.
\end{equation}
Finally by the main theorem of \cite{graczyk2015lyapunov}, for a.e. $c\in\mathcal{M}_d$ in the sense of harmonic measure
\begin{equation*}
\lim_{n\to\infty}\frac{1}{n}\log|(f_c^n)'(c)|=\log d,
\end{equation*}
which is equivalent to (since $|f_c'(z)|=d|z^{d-1}|$ )
\begin{equation}
\lim_{n\to\infty}\frac{1}{n}\sum_{i=0}^n -\log|f^i_c(c)|=0.
\end{equation}
Combining (B.5) and (B.6) we get for a.e. $c\in\mathcal{M}_d$ in the sense of harmonic measure
\begin{equation}
\lim_{\delta\to 0}\limsup_{n\to\infty}\frac{1}{n}\sum_{\begin{subarray}{c}i=0\\d(f^i_c(c),0)\leq \delta\end{subarray}}^{n-1}-\log|f_c^i(c)|=0.
\end{equation}
By Przytycki's lemma (Lemma \ref{Przytycki}) we have 
\begin{equation}
\lim_{\delta\to 0}\lim_{n\to\infty}\frac{1}{n}\sum_{i=0}^{n-1} \chi_{D(0,\delta)}(f_c^i(c))=0.
\end{equation}
Since $|f_c'(f_c^i(c))|=d|f_c^i(c)|^{d-1}$, combining (B.7) and (B.8) we get
\begin{equation*}
\lim_{\delta\to 0}\limsup_{n\to\infty}\frac{1}{n}\sum_{\begin{subarray}{c}i=0\\d(f^i_c(c),0)\leq \delta\end{subarray}}^{n-1}-\log|f_c'(f_c^i(c))|=0.
\end{equation*}
Thus the proof is complete.
\end{proof}

\end{appendix}

\end{document}